\newtheorem{Def}{Definition}[section]
\newtheorem{Notation}[Def]{Notation}
\newtheorem{As}[Def]{Assumption}
\newtheorem{Lem}[Def]{Lemma}
\newtheorem{Prop}[Def]{Proposition}
\newtheorem{Theorem}[Def]{Theorem}
\newtheorem{Cor}[Def]{Corollary}
\newtheorem{Test}[Def]{Hypothesis Test}
\newtheorem{Rm}[Def]{Remark}
\newtheorem{Int}[Def]{Interpretation}
\DeclareMathOperator{\argmin}{\mbox{\rm argmin}}
\newcommand{\eps}{\varepsilon}
\newcommand{\iid}{\operatorname{\stackrel{i.i.d.}{\sim}}}
\newcommand{\inD}{\operatorname{\stackrel{\mathcal{D}}{\to}}}
\newcommand{\mun}{\widehat{\mu}_n}
\newcommand{\nun}{\widehat{\nu}_n}
\newcommand{\Vn}{\widehat{V}_n}
\newcommand{\gsim}{\:\raisebox{0.1\baselineskip}{\vstretch{0.7}{\gtrsim}}\:}
\DeclarePairedDelimiter{\ceil}{\lceil}{\rceil}
\begin{document}
  \title{M-Variance Asymptotics and Uniqueness of Descriptors}
  \author{Benjamin Eltzner\footnote{Max Planck Institute for Multidisciplinary Sciences, G\"ottingen, \texttt{benjamin.eltzner@mpinat.mpg.de}\newline \phantom{......}University of G\"ottingen, \texttt{benjamin.eltzner@mathematik.uni-goettingen.de}}}
  \maketitle

\begin{abstract}
  Asymptotic theory for M-estimation problems usually focuses on the asymptotic convergence of the sample descriptor, defined as the minimizer of the sample loss function. Here, we explore a related question and formulate asymptotic theory for the minimum value of sample loss, the M-variance. Since the loss function value is always a real number, the asymptotic theory for the M-variance is comparatively simple. M-variance often satisfies a standard central limit theorem, even in situations where the asymptotics of the descriptor is more complicated as for example in case of smeariness, or if no asymptotic distribution can be given as can be the case if the descriptor space is a general metric space. We use the asymptotic results for the M-variance to formulate a hypothesis test to systematically determine for a given sample whether the underlying population loss function may have multiple global minima. We discuss three applications of our test to data, each of which presents a typical scenario in which non-uniqueness of descriptors may occur. These model scenarios are the mean on a non-euclidean space, non-linear regression and Gaussian mixture clustering.
\end{abstract}

\section{Introduction}

Asymptotic theory is a cornerstone of statistics and an important underpinning for the use of asymptotic distribution quantiles in hypothesis tests. For this reason, asymptotic theory has been generalized to a variety of different settings. Here we focus on M-estimators, which are a fairly broad class of estimators. The motivation of our approach stems from the Fr\'echet mean on manifolds but our results are applicable to a variety of different scenarios as illustrated by our data applications. In order to include descriptors like covariance matrices of normal distributions we do not restrict ourselves to linear spaces but develop asymptotic theory for non-Euclidean data and descriptor spaces. Asymptotic theory for such settings has been developed among others by \cite{HL98,BP05,H11a,BL17}. Notably, \cite{EH19} even extend asymptotic theory to probability measures featuring a lower asymptotic rate than $n^{-1/2}$, a phenomenon termed ``smeariness''.

\emph{M-estimators}, to which we restrict our attention here, are descriptors defined as minimizers of a loss function, the \emph{Fr\'echet function}. In all of the following, we call the minimum values of the population and sample Fr\'echet function the \emph{population} and \emph{sample M-variance}, respectively. In the case of the Fr\'echet function being the expected square distance, the M-variance is also known as the \emph{variance}. This should not be confused with the trace of the CLT-covariance, which is in the general case an unrelated quantity. We show a Central Limit Theorem (CLT) for the M-variance, building on the CLT for the Fr\'echet variance in general metric spaces given by \cite{DM18} and making heavy use of empirical process theory as presented in \cite{vdVW96,vdV00}. We show that a CLT can be formulated for the more general case that the Fr\'echet function does not have a unique global minimum but still a finite number of global minima in a straight-forward manner.

In the literature on the asymptotics of M-estimators on non-Euclidean data spaces, it is usually assumed that the population value of the descriptor is unique. However, while \cite{Sturm03} showed that the population Fr\'echet mean is unique on spaces with non-positive Alexandrov curvature, this need not hold on positively curved spaces, as illustrated by straight forward examples like two uniform modes centered at antipodal points of a circle or a uniform distribution on the equator of a sphere. On the other hand, real data always consists only of finitely many values, therefore one might ask whether the sample descriptor is unique under more general conditions. \cite{AM14} showed that for a sample from a continuously distributed random variable the sample Fr\'echet mean is almost surely unique.

Given this result, one may be tempted to rely on the sample mean as a meaningful data descriptor, since it is unique under general conditions. Furthermore, it is conceivable that the result can be generalized to a broad class of M-estimators. However, it is not clear, whether a sample with a unique descriptor may have been drawn from a population with non-unique descriptor. If this is the case, the full descriptor set of the population is not adequately reflected by the sample descriptor and conclusions from the sample descriptor may be invalid.

Here, we approach this problem and devise a hypothesis test for non-uniqueness of the population descriptor.
The null hypothesis is that the population descriptor is not unique, since wrongly assuming a unique population descriptor could lead to wrong results. As an example, consider a population with two population descriptor values $\mu^1$ and $\mu^2$ and two samples with unique sample descriptors, one near $\mu^1$ and the other near $\mu^2$, then a two-sample $T^2$-type test for equality of the descriptor might wrongly reject the null hypothesis because of the tacit underlying assumption of a unique population descriptor. To achieve an asymptotic size result for the hypothesis test, the generalized asymptotic theory for the M-variance formulated before is used.

We now give a brief sketch of our proposed hypothesis test to motivate the program of this paper. The null hypothesis of our test is that there are a number $m>1$ of population descriptors $\mu^i$. Let $\Vn^i$ denote the sample M-variance in the neighborhood of any $\mu^i$ and let $W^{ij}$ be the CLT variance such that $\sqrt{n} (\Vn^i - \Vn^j) \to N(0,W^{ij})$. For any $i$ consider the index sets $I_i := \{1, \dots, i-1, i+1, \dots, m\}$. Then the type I error $\zeta_n$ is given by the probability that the sample M-variance $\Vn^i$ in the neighborhood of any $\mu^i$ is smaller by at least the normal quantile $q^{ij}_n \left(\frac{\alpha}{2}\right) := \frac{\sqrt{W^{ij}}}{\sqrt{n}} q \left(\frac{\alpha}{2}\right)$ than all other $\Vn^j$, where $q$ is the standard normal quantile function,
\begin{align*}
  \zeta_n(\alpha) := \sum_{i=1}^m \mathbb{P} \left( \forall j \in I_i : \, \left(\Vn^i - \Vn^j\right) \le q^{ij}_n \left(\frac{\alpha}{2}\right) \right) \, .
\end{align*}
As first step of our hypothesis test, we take a set of $B$ $n$-out-of-$n$ bootstrap samples and calculate bootstrap M-variances $V_{n,n}^{*,i}$ in the neighborhoods of the $\mu^i$. We define probabilities of bootstrap M-variances under the condition that $\Vn^i - \Vn^j \approx q^{ij}_n \left(\frac{\alpha}{2}\right)$ for all $j$,
\begin{align*}
  \xi_n(\alpha) := \lim_{\eps \to 0} \mathbb{P} \left(\exists j \in I_i : \, V_{n,n}^{*,i} - V_{n,n}^{*,j} > 0 \, \middle| \, \forall j \in I_i : \, \frac{1}{q^{ij}_n \left(\frac{\alpha}{2}\right)} \left(\Vn^i - \Vn^j\right) \in \left[1, 1 + \eps \right] \right) \, .
\end{align*}
Thus, $\xi_n$ is the probability that despite the fact that $\Vn^i - \Vn^j = q^{ij}_n \left(\frac{\alpha}{2}\right)$ for all $j$, which is the boundary of the region defining the type I error, the bootstrap M-variance $V_{n,n}^{*,i}$ is not the smallest, but there is some $V_{n,n}^{*,j} < V_{n,n}^{*,i}$. The probability is defined over all samples of size $n$ and all corresponding bootstrap samples of size $n$ and is therefore a deterministic value. This means that the bootstrap minimizer $\mu_{n,n}^*$ will not be close to the global sample descriptor $\mun = \mun^i$ but to the local sample descriptor $\mun^j$. In other words, under the null hypothesis there are multiple clusters of bootstrap descriptors.

The second step of our hypothesis test is therefore to determine whether there are multiple clusters in the set of bootstrap descriptors $\{\mu_{n,n}^{*,i}\}_i$ and count the number $k_{\text{far}}$ of bootstrap descriptors, which are in another cluster than the global sample descriptor $\mun$. For this, we use the multiscale test by \cite{DW08}, so we can use their uniform size and power results to show an asymptotic size result and conjecture an asymptotic power result for our test. To do this, we only consider the distances of the bootstrap descriptors from the sample descriptor, thereby translating the hypothesis test into a search for modes in a set of one-dimensional values.

The third and last step of our test determines whether $k_{\text{far}}$ is small enough to reject the null hypothesis. For this, we use the asymptotic result shown in Theorem \ref{theo:quantileLLN},
\begin{align*}
  \lim_{n \to \infty} 2 \xi_n(\alpha) \ge \alpha \ge \lim_{n \to \infty} \zeta_n(\alpha)
\end{align*}
which means that the test rejects  at level $\alpha$ if $2 k_{\text{far}} /B < \alpha$, and we can conclude that the descriptor is unique to the chosen level. Otherwise, the possibility that the mean is not unique cannot be rejected.

We present three data examples to show the wide range of application scenarios of our test. The first data set concerns the mean on a circle and is therefore very close to the motivating ideas presented above. The second application is to non-linear least-squares regression, formulated as an M-estimation problem. Such curve fit estimators are very common in the natural sciences and are therefore highly relevant. We find that, although the parameter space looks innocuous, curve fits are found to be non-unique in some cases. The third application concerns a Gaussian mixture which is used for clustering. This method and related centroid clustering methods are very commonly used and the question which number of clusters should be assumed to describe a data set is a well known subject of discussion. If one assumes the point of view that an unambiguous cluster result is essential, our test for non-uniqueness can be used as a model selection tool.

It has been repeatedly suggested to us in private communication that instead of \textit{uniqueness} we should use the term \textit{identifiability} of the descriptor. However, in the case of non-uniqueness, the descriptor of interest can be regarded as set-valued and the set of descriptors might be identifiable in the sense that consistent estimators can be given by using clusters of bootstrap estimators. A detailed discussion of identifiability of set-valued descriptors is deferred to future research but since it seems plausible that such a limited form of identifiability is preserved in some cases, we use the less absolute term \textit{non-unique} instead of \textit{non-identifiable} in this paper.

\section{M-Variance Asymptotics on a Metric Space}\label{section:preliminaries}

In this section we discuss the CLT for the M-variance, which plays a crucial role for our considerations, in the very general setting of metric spaces. Especially, we will show that even when an estimator exhibits a lower asymptotic rate than $n^{-1/2}$, see e.g. \cite{EH19}, the asymptotic rate for the M-variance will remain $n^{-1/2}$ under fairly generic circumstances.

In all of the following let $Q$ be a topological space called the \textit{data space} and $\mathcal{P}$ a separable topological space with continuous metric function $d$ called the \textit{parameter space}. $\Omega$ is a silently underlying probability space as usual. Let $\rho : \mathcal{P} \times Q \to \mathbb{R}$ a continuous function, $X : \Omega \to Q$ a $Q$-valued random variable and $X_1, \dots, X_n \iid X$ a random sample. Define Fr\'echet functions and descriptors for samples and populations
\begin{align}
  F(p) &= \mathbb{E}[\rho(p, X)] & E &= \left\{ p \in \mathcal{P} :F(p) = \inf_{p \in \mathcal{P}} \limits F(p) \right\} \\
  F_n(p) &= \frac{1}{n} \sum_{j=1}^{n} \limits \rho(p, X_j) & E_n &= \left\{ p \in \mathcal{P} :F_n(p) = \inf_{p \in \mathcal{P}} \limits F_n(p) \right\}\, .
\end{align}
The elements of $E_n$, if it is non-empty, are commonly called \emph{M-estimators}. The term \emph{M-estimator} has been widely used in the field of robust statistics. In the field of Statistics on non-Euclidean spaces the term \emph{generalized Fr\'echet means} has been introduced by \cite{H11a} for M-estimators where the elements of $\mathcal{P}$ define geometrical objects, for example geodesics of $Q$. In the present article, we are not specifically concerned with robust M-estimators nor with geometric objects, so we use the term \emph{M-estimator} in its general meaning.

Before continuing the exposition, we fix some notation.
\begin{Notation}\phantom{}
  \begin{enumerate}[(i)]
    \item For a point $p \in \mathcal{P}$ and $\eps > 0$ let $\mathcal{B}_\eps(p) = \{p' \in \mathcal{P} : d(p, p') < \eps \}$.
    \item For a point $x \in \mathbb{R}^p$ and $\eps > 0$ let $B_\eps(x) = \{x' \in \mathbb{R}^p : |x'-x| < \eps \}$.
  \end{enumerate}
\end{Notation}

To define the asymptotics of the estimator $E_n$, it is first necessary to make sure that it is consistent, i.~e. it converges to $E$ for large $n$. We will use a rather general formulation of consistency, introduced by \cite{BP03} and based on earlier work by \cite{Z77}. Consistency has been derived by \cite{BP03,H11a} under a few technical conditions on $\rho$ as well as $\mathcal{P}$ satisfying separability and the cluster points of the sample Fr\'echet mean sets satisfying the Heine Borel property. In the present article we do not expand on this subject. We will simply require $E_n$ to be consistent in all of the following.

\begin{As}[Bhattacharya-Patrangenaru strong consistency (BPSC)]$ $\label{as:BPSC}\\
  The estimator $E_n$ is Bhattacharya-Patrangenaru strongly consistent (BPSC), i.e. for every $\eps > 0$ there is a $\Omega_0 \subseteq \Omega$ with $\mathbb{P}(\Omega_0) = 1$ such that
  \begin{align*}
    \forall \omega \in \Omega_0 \quad \exists n_0 > 0 \quad \forall n \ge n_0 \quad \forall \widehat{\mu}_n \in E_n \quad \exists \mu \in E : \quad d (\widehat{\mu}_n, \mu) \le \eps
  \end{align*}
\end{As}
The proofs of BPSC given by \cite[Theorem 2.3]{BP03} and \cite[Theorem A.4]{H11a} do not assume that $E$ contains exactly one element, but that $E \neq \emptyset$. Therefore, the existing theory is readily applicable in our setting. The assumption of BPSC is fundamental to all arguments presented here and it is unlikely that it can be relaxed without requiring significant restrictions in return.

We will approach the case that the population descriptor is not unique but we will assume throughout this article that the population descriptor set consists of a finite number of points.

\begin{As}\label{as:finite}
  The set of population descriptors comprises $m$ points $E = \{\mu^1, \dots, \mu^m\} \subset \mathcal{P}$ and we define $\delta > 0$ such that $\mathcal{B}_\delta(\mu^j) \cap \mathcal{B}_\delta(\mu^k) = \emptyset$ holds for any two points in $E$.
\end{As}

This assumption excludes cases, where the population descriptor set contains a submanifold of descriptor space. This can happen for example due to symmetry in the system leading to minima sets given by orbits of symmetry transformations. One simple example is the set of Fr\'echet means on a sphere with a point mass on the north pole and a point mass on the south pole, which is a circle of constant latitude. A generalization to such cases requires a different approach and is beyond the scope of the article.

In the following, we will briefly consider the general case that $\mathcal{P}$ is simply a metric space, not necessarily a manifold. The setting we use is very similar to the one used by \cite{DM18}. However, for general M-variance, we need the following additional assumption that the function $\rho$ exhibit some regularity beyond what is needed for strong consistency.

\begin{As}[Almost Surely Locally Lipschitz]\label{as:Lipschitz0}
  Assume $\delta>0$ from Assumption \ref{as:finite} and that for every $\mu^i \in E$, there is a measurable function $\dot{\rho}^i: Q \to \mathbb{R}$ satisfying $\mathbb{E} [\dot{\rho}^i(X)^2] < \infty$ and a metric $d_i$ of $\mathcal{P}$, such that the following Lipschitz condition 
  \begin{align*}
    |\rho(p_1, X) - \rho(p_2, X)| \le \dot{\rho}^i(X) d_i(p_1, p_2) \quad\mathrm{ a.s.}
  \end{align*}
  holds for all $p_1, p_2\in \mathcal{B}_\delta(\mu^i)$.
\end{As}

This assumption is always satisfied for the Fr\'echet variance corresponding to the case $\mathcal{P} = Q$ and $\rho(p,x) = d(p,x)^2$, as can be seen from \cite{DM18}. More precisely, that article employs the additional restriction that the space $Q$ be compact but since we rely on the consistency Assumption \ref{as:BPSC}, the Lipschitz condition is only required to hold locally.

Finally, one needs a so-called entropy condition. To state this assumption, we need to introduce two important concepts

\pagebreak[2]
\begin{Def}\phantom{}
  \begin{enumerate}[(i)]
    \item For a class of functions $\mathcal{F}$ from $Q$ to $\mathbb{R}$, an envelope function $F: Q \to \mathbb{R}$ is any function, such that for any $f \in \mathcal{F}$ and $q \in Q$, $|f(q)| \le F(q)$.
    \item For a totally bounded metric space $(\mathcal{P}, d)$ and a size $\eps$ the \emph{covering number} $N(\eps, \mathcal{P}, d)$ is defined as the minimal number of $\eps$ balls needed to cover $\mathcal{P}$.
  \end{enumerate}
\end{Def}

\begin{As}[Entropy bound]\label{as:entropy}
  For the classes $\mathcal{F}^i := \Big\{ \rho^i(p, \cdot) - \rho^i(\mu^i, \cdot), p \in \mathcal{B}_\delta(\mu^i) \Big\}$ with envelope functions $2 \delta \dot{\rho}^i$ the following entropy bound holds
  \begin{align*}
    \lim_{\delta \to 0} \limits \delta \int_0^1 \limits \sqrt{1 + \log N(\eps \delta, \mathcal{B}_\delta(\mu^i), d_i ) } \, d\eps = 0 \, .
  \end{align*}
\end{As}

This bound is called an entropy bound, because the integral can be expressed in terms of the covering entropy defined in Definition \ref*{def:cover-number} in the appendix. The bound is fairly general as discussed in \cite{DM18}. Lemma \ref*{lem:entropy} and Theorem \ref*{theo:emp-proc-bound} show that a slightly altered entropy bound holds in the finite dimensional manifold setting discussed below. The local Lipschitz and entropy assumptions are discussed in more detail in Appendix~\ref*{section-supp:aux-metric}.

Now, we have introduced all the tools necessary to extend the result of \cite{DM18} to general M-estimators.

\begin{Theorem}[CLT for the M-variance on metric spaces]\label{theo:CLT-m-variance-metric-spaces}
  Under Assumptions \ref{as:BPSC}, \ref{as:finite}, \ref{as:Lipschitz0} and \ref{as:entropy} we have, for a measurable selection $\mun^i \in \left\{ x \in \mathcal{B}_\delta(\mu^i) : F_n(x) = \inf_{y \in \mathcal{B}_\delta(\mu^i)} \limits F_n(y) \right\}$
  \begin{align*}
    \sqrt{n} \big(F_n(\mun^i) - F(\mu^i)\big) \inD \mathcal{N}\left(0, \textnormal{Var}\left[\rho(\mu^i, X)\right]\right) \, .
  \end{align*}
\end{Theorem}

\begin{proof}
  From Proposition \ref*{prop:CLT-metric} we have
  \begin{align*}
    \sqrt{n} \big(F_n(\mun^i) - F(\mu^i)\big) &= \sqrt{n} \big(F_n(\mu^i) - F(\mu^i)\big) + o_P(1)\\
    &= \frac{1}{\sqrt{n}} \sum_{j=1}^n \left( \rho(\mu^i, X_j) - \mathbb{E}[\rho(\mu^i, X)] \right) + o_P(1) \, .
  \end{align*}
  The result follows using the standard CLT.
\end{proof}

\section{M-Variance Asymptotics on a Manifold}\label{section:asymptotics}

\subsection{Preliminaries}

We will now restrict to the case that $\mathcal{P}$ additionally has a Riemannian manifold structure and is finite dimensional. In that case, we can drop Assumption \ref{as:entropy}, since it will follow from a suitably modified form of Assumption \ref{as:Lipschitz0}. We now introduce some notation and assumptions for the case that the population descriptor is not unique as formulated in Assumption \ref{as:finite}, i.e. $E = \{\mu^1, \dots, \mu^m\}$, in order to use them in later sections.

\begin{As}[Local Manifold Structure] \label{as:local-manifold}
  For every $\mu^i \in E$, assume that there is an $2\leq r_i \in \mathbb{R}$ and a neighborhood $\widetilde{U}_i$ of $\mu^i$ that is a $p$-dimensional Riemannian manifold, $p\in \mathbb{N}$, such that with a neighborhood $U_i$ of the origin in $\mathbb{R}^p$ the exponential map $\exp_{\mu^i} : U_i \to \widetilde{U}_i$, $\exp_{\mu^i}(0)= \mu^i$, is a $C^{\ceil{r_i}}$-diffeomorphism. For $x \in U_i$ and $q\in Q$ we introduce the notation
  \begin{align*}
    \tau^i &: (x,q) \mapsto \rho (\exp_{\mu^i}(x), q)\,,\\
    G^i &: x \mapsto F(\exp_{\mu^i}(x))\,, & G^i_n &: x \mapsto F_n(\exp_{\mu^i}(x))\,.
  \end{align*}
\end{As}
This assumption allows for slightly more general descriptor spaces than manifolds, like stratified spaces. However, in such spaces one is restricted to random variables which take their descriptor values in the manifold part of the space. Stratified spaces, in which the mean is generically taken on the manifold part, are called \emph{manifold stable} and \cite{H_meansmeans_12} showed that landmark shape spaces enjoy this property. The local manifold assumption serves mostly as a convenience and it allows for the generalization of assumptions \ref{as:Lipschitz} and \ref{as:Lipschitz2} to H\"older continuity.

\begin{As}[Almost Surely Locally H\"older Continuous and Differentiable at Descriptor]\label{as:Lipschitz}
  Assume that for every $\mu^i \in E$
  \begin{enumerate}[(i)]
    \item the gradient $\dot{\tau}^i_0 (X) := \mathrm{grad}_q \tau^i(q,X)|_{q=0}$ exists almost surely;
    \item there is a measurable function $\dot{\tau}^i: Q \to \mathbb{R}$ satisfying $\mathbb{E} [\dot{\tau}^i(X)^2] < \infty$ and some $0 < \beta \le 1$ such that the following H\"older condition
    \begin{align*}
      |\tau^i(x_1, X) - \tau^i(x_2, X)| \le \dot{\tau}^i(X) \| x_1 - x_2 \|^\beta \quad\mathrm{ a.s.}
    \end{align*}
    holds for all $x_1,x_2\in U_i$.
  \end{enumerate}
\end{As}
This Assumption holds for many loss functions including for the $L_p$-minimizer, principal geodesics and maximum likelihood estimators. The assumption boils down to requiring a finite second moment of the random variable in case of the mean on Euclidean space. A generalization of this type of condition using so-called quadruple inequalities was discussed by \cite{Schoetz2019}.

Using Assumptions \ref{as:local-manifold} and \ref{as:Lipschitz}, one can replace the entropy condition with a similar condition. This is detailed in Lemma \ref*{lem:entropy} in Appendix~\ref*{subsec-supp:entropy}.

\subsection{A CLT for Fr\'echet M-Variance and M-Variance Differences}

The first asymptotic result we show is the asymptotic behavior of the M-variance. This result will be used repeatedly throughout the following.

\begin{Def}
  For a population descriptor $\mu^i$ the set of all \emph{$\delta$-local sample $i$-descriptors} is defined by
  \begin{align*}
    E^i_n = \left\{ x \in B_\delta(0) \subset \mathbb{R}^p : G_n^i(x) = \inf_{y \in B_\delta(0)} \limits G_n^i(y) \right\} \, .
  \end{align*}
  Elements from this set are denoted $\nun^i \in E^i_n$. In a slight abuse of terminology, we will also call $\mun^i := \exp_{\mu^i}(\nun^i)$ a $\delta$-local sample $i$-descriptor.
\end{Def}

According to the result of \cite{AM14}, the $\delta$-local sample $i$-descriptors are almost surely unique in case of the Fr\'echet mean if the random variable $X$ possesses a density with respect to Lebesgue measure in charts. However, to account for more general cases, where they may not be unique, we will state the following results in terms of measurable selections of $\delta$-local sample $i$-descriptors.

We denote the vector of $\tau^i(0,X)$ and its covariance as
\begin{align*}
  \tau(0,X) :=& \begin{pmatrix} \tau^1(0, X) \\ \vdots \\ \tau^m(0, X) \end{pmatrix}\\
  \textnormal{Cov}[\tau(0,X)] :=& \mathbb{E} \left[ \big(\tau(0,X) - \mathbb{E}[\tau(0,X)]\big) \big(\tau(0,X) - \mathbb{E}[\tau(0,X)]\big)^T \right]
\end{align*}
Furthermore, we introduce the shorthand notations
\begin{align}
  \Vn^i :=& G^i_n(\nun^i) & \Vn :=& \begin{pmatrix} \Vn^1 \\ \vdots \\ \Vn^m \end{pmatrix} & V :=& G^1(0) = \dots = G^m(0) & \mathbf{1}_m :=& \begin{pmatrix} 1 \\ \vdots \\ 1 \end{pmatrix} \label{eq:v-def}
\end{align}
for the sample M-variance $\Vn^i$ and the population M-variance $V$.

\begin{Theorem}[M-Variance Asymptotics] \label{theo:CLT-m-variance}
  Under Assumptions \ref{as:BPSC}, \ref{as:finite}, \ref{as:local-manifold} and \ref{as:Lipschitz}, using the notation from Equation \eqref{eq:v-def} we have for measurable selections $\nun^i$ of $\delta$-local sample $i$-descriptors for all $i \in \{1, \dots, m\}$ 
  \begin{align*}
    \sqrt{n} \left( \Vn - V \mathbf{1}_m \right) \inD \mathcal{N}\Big( 0, \textnormal{Cov}[\tau(0,X)] \Big) \, .
  \end{align*}
\end{Theorem}

\begin{proof}
  We use the fact that $G^i(0) = G^j(0)$ for all $i$ and $j$ and Proposition \ref*{prop:CLT-manifold2} to get
  \begin{align*}
    \sqrt{n} \left( \Vn - V \mathbf{1}_m \right) &= \sqrt{n} \begin{pmatrix} G^1_n(\nun^1) - G^1(0) \\ \vdots \\ G^m_n(\nun^m) - G^m(0) \end{pmatrix} = \sqrt{n} \begin{pmatrix} G^1_n(0) - G^1(0) \\ \vdots \\ G^m_n(0) - G^m(0) \end{pmatrix} + o_P(1)\\
    &= \frac{1}{\sqrt{n}} \sum_{j=1}^n \begin{pmatrix} \tau^1(0, X_j) - \mathbb{E}[\tau^1(0, X)] \\ \vdots \\ \tau^m(0, X_j) - \mathbb{E}[\tau^m(0, X)] \end{pmatrix} + o_P(1) \, .
  \end{align*}
  The claim follows from the standard multivariate Euclidean CLT.
\end{proof}

Appendix~\ref*{subsec-supp:aux-clt} expands on the proof and the interpretation of this theorem.

\begin{Cor}[M-Variance Difference Asymptotics] \label{cor:CLT-m-variance2}
  Under Assumptions \ref{as:BPSC}, \ref{as:finite}, \ref{as:local-manifold} and \ref{as:Lipschitz} we have, for measurable selections $\nun^i$ of $\delta$-local sample $i$-descriptors for any $i, j \in \{1, \dots, m\}$
  \begin{align*}
    \sqrt{n} \big(\Vn^i - \Vn^j \big) \inD \mathcal{N}\Big( 0, \textnormal{Var}[\tau^i(0,X) - \tau^j(0,X)] \Big)
  \end{align*}
\end{Cor}

\begin{proof}
  The claim follows immediately from Theorem \ref{theo:CLT-m-variance} where we use $G^i(0) = G^j(0) = V$, the definition of the multivariate normal and
  \begin{align*}
    (e_i -e_j)^T \textnormal{Cov}[\tau(0,X)] (e_i -e_j) &= \textnormal{Var}[\tau^i(0,X) - \tau^j(0,X)] \, .
  \end{align*}
  \vspace*{-2\baselineskip}\\
  \phantom{}
\end{proof}

For the following it is necessary that the covariance matrix in the CLT $\textnormal{Cov}[\tau(0,X)]$ has full rank. This is true for a wide range of population measures and descriptors. A detailed investigation of this topic is beyond the scope of this paper, so we introduce a new assumption to this effect.

\begin{As}\label{as:covar-rank}
  The covariance matrix $\textnormal{Cov}[\tau(0,X)]$ has full rank.
\end{As}

Note that this assumption ensures that also $\textnormal{Var}[\tau^i(0,X) - \tau^j(0,X)] > 0$ for any $i \neq j$.

\subsection{Asymptotics for M-Variance Bootstrap and Covariance}

We are working towards a hypothesis test based on the bootstrap, therefore we need to establish asymptotic results for bootstrap estimates as well. In all of the following, recall $\mun^i := \exp_{\mu^i}(\nun^i)$.

\begin{Def}[Bootstrap Fr\'echet function]
  For a sample $X_1, \dots, X_n \iid X$
  let $X_1^*, \dots X_k^*$ be a bootstrap sample which is drawn with replacement. Let $X^*$ be the random variable associated with the empirical measure $\mathbb{P}_n$. For measurable selections $\mun^i$ of $\delta$-local sample $i$-descriptors we define
  \begin{align*}
    \tau_n^{*,i} &: (x,q) \mapsto \rho(\exp_{\mun^i}(x), q)\,,\\
    G^{*,i}_n &: x \mapsto \mathbb{E}[\tau_n^{*,i}(x,X^*)] = \frac{1}{n} \sum_{j=1}^n \limits \tau_n^{*,i}(x,X_j)\,, & G^{*,i}_{n,k} &: x \mapsto \frac{1}{k} \sum_{j=1}^k \limits \tau_n^{*,i}(x,X_j^*)\,.
  \end{align*}
\end{Def}

\begin{Rm}
  The notation $\tau_n^{*,i}$ is chosen in analogy to the notation for bootstrap quantities, since it is evaluated on the bootstrap random variable $X^*$. In the definition of $G^{*,i}_n$ and everywhere else below, the expected value $\mathbb{E}[\cdot]$ as well as the covariance $\textnormal{Cov}[\cdot]$ if applied on a function of $X^*$ are understood to use the distribution measure of this random variable. Note that in the notation chosen here, we have $\tau_n^{*,i}(0,q) = \tau^i(\nun^i, q)$ and in consequence $G^{*,i}_n (0) = G^i_n (\nun) = \Vn^i$ as equivalent notation for the sample M-variance.
\end{Rm}

\begin{Def}
  For a population descriptor $\mu^i$ and corresponding measurable selection of $\delta$-local sample $i$-descriptor $\mun^i$ let $U_i := \log_{\mun^i} (\mathcal{B}_\delta(\mu^i))$. Then the set of all \emph{$\delta$-local bootstrap $i$-descriptors} is defined by
  \begin{align*}
    E^{*,i}_{n,k} = \left\{ x \in U_i : G^{*,i}_{n,k}(x) = \inf_{x \in U_i} \limits G^{*,i}_{n,k}(x) \right\} \, .
  \end{align*}
  Elements from this set are denoted $\nu^{*,i}_{n,k} \in E^{*,i}_{n,k}$.
\end{Def}

Denoting the vector of $\tau_n^{*,i}(0,X^*)$ and its covariance as
\begin{align*}
  \tau_n^*(0,X^*) :=& \begin{pmatrix} \tau_n^{*,1}(0,X^*) \\ \vdots \\ \tau_n^{*,m}(0,X^*) \end{pmatrix} \qquad \tau(\nun,X^*) := \begin{pmatrix} \tau^1(\nun^1,X^*) \\ \vdots \\ \tau^1(\nun^m,X^*) \end{pmatrix}\\
  \textnormal{Cov}[\tau_n^*(0,X^*)] :=& \mathbb{E} \left[ \big(\tau_n^*(0,X^*) - \mathbb{E}[\tau_n^*(0,X^*)]\big) \big(\tau_n^*(0,X^*) - \mathbb{E}[\tau_n^*(0,X^*)]\big)^T \right]\\
  =& \mathbb{E} \left[ \big(\tau(\nun,X^*) - \mathbb{E}[\tau(\nun,X^*)]\big) \big(\tau(\nun,X^*) - \mathbb{E}[\tau(\nun,X^*)]\big)^T \right]\\
  =& \frac{1}{n} \sum_{j=1}^n \limits \tau(\nun, X_j)\tau(\nun, X_j)^T - \frac{1}{n^2} \sum_{j=1}^n \limits \sum_{k=1}^n \limits \tau(\nun, X_j)\tau(\nun, X_k)^T
\end{align*}
and using notation
\begin{align}
  V^{*,i}_{n,k} :=& G^{*,i}_n(\nu^{*,i}_{n,k}) & V_{n,k}^{*} :=& \begin{pmatrix} V_{n,k}^{*,1} \\ \vdots \\ V_{n,k}^{*,m} \end{pmatrix}  \label{eq:v-star-def}
\end{align}
for the bootstrap M-variance, we get

\begin{Cor}[Bootstrap M-Variance Asymptotics] \label{cor:CLT-boot-m-variance}
  Under Assumptions \ref{as:BPSC}, \ref{as:finite} and \ref{as:local-manifold} and assuming that the population measure $P$ and the empirical measure $\mathbb{P}_n$ both satisfy Assumption \ref{as:Lipschitz}, we have, for measurable selections $\nun^i$ of $\delta$-local sample $i$-descriptors for all $i \in \{1, \dots, m\}$ in the limit $k \to \infty$,
  \begin{align*}
    \sqrt{k} \left( V_{n,k}^{*} - \Vn \right) &\inD \mathcal{N}\Big( 0, \textnormal{Cov}[\tau_n^*(0,X^*)] \Big) \, ,\\
    \sqrt{k} \left( \left( V^{*,i}_{n,k} - V^{*,j}_{n,k} \right) - \left( \Vn^i - \Vn^j \right) \right) &\inD \mathcal{N}\Big( 0, \textnormal{Var}\left[\tau^{*,1}_n(0, X^*) - \tau^{*,2}_n(0, X^*)\right] \Big) \, .
  \end{align*}
  Note that by requiring $\mathbb{P}_n$ to satisfy Assumption \ref{as:Lipschitz} and since the covariance depends explicitly on the sample, this result is dependent on the sample.
\end{Cor}

\begin{proof}
  The claims follow immediately from Theorem \ref{theo:CLT-m-variance} and Corollary \ref{cor:CLT-m-variance2}.
\end{proof}

\noindent Next, we are looking for an asymptotic result for $\textnormal{Cov}[\tau_n^*(0,X^*)]$ and $\textnormal{Cov}[\tau(0,X)]$.
\begin{Def}[Product Fr\'echet function]
  For a sample $X_1, \dots, X_n \iid X$, define
  \begin{align*}
    \tau^{ij} &: (x^1, x^2, X) \mapsto \tau^{i}(x^1,X)\tau^{j}(x^2,X)\\
    G^{ij} &: (x^1, x^2) \mapsto \mathbb{E}[\tau^{ij}(x^1,x^2,X)] \,, & G^{ij}_{n} &: (x^1, x^2) \mapsto \frac{1}{n} \sum_{k=1}^n \limits \tau^{ij}(x^1,x^2,X_k)\,.
  \end{align*}
\end{Def}

\begin{As}[Almost Surely Locally H\"older Continuous and Differentiable at Descriptor]\label{as:Lipschitz2}
  Let Assumption \ref{as:Lipschitz} hold for both the population measure $P$ and the empirical measure $\mathbb{P}_n$ and, denoting $\tau^i_{\max}(X) := \max_{x \in \mathcal{P}^i} \limits \tau^i(x, X)$, assume that for every $\mu^i, \mu^j \in E$ the function
  \begin{align*}
    \dot{\tau}^{ij}(X) := \tau^i_{\max}(X) \dot{\tau}^j(X) + \tau^j_{\max}(X) \dot{\tau}^i(X)
  \end{align*}
  is measurable and satisfies $\mathbb{E} [\dot{\tau}^{ij}(X)^2] < \infty$.
\end{As}

\begin{Rm}
  From Assumption \ref{as:Lipschitz2} one can see, using $\tau^i_{\max}$ from Assumption \ref{as:Lipschitz2}, that the following H\"older condition
  \begin{align*}
    |\tau^{ij}(x^i, x^j, X) - \tau^{ij}(y^i, y^j, X)| \le& \tau^i_{\max}(X) \left| \tau^j(x^j, X) - \tau^j(y^j, X) \right|\\
    & + \tau^j_{\max}(X) \left| \tau^i(x^i, X) - \tau^i(y^i, X) \right|\\
    \le& \tau^i_{\max}(X) \dot{\tau}^j(X) \left| x^j- y^j \right|^\beta + \tau^j_{\max}(X) \dot{\tau}^i(X) \left| x^i- y^i \right|^\beta \quad\mathrm{ a.s.}\\
    \le& \dot{\tau}^{ij}(X) \left| \begin{pmatrix} x^i \\ x^j \end{pmatrix} - \begin{pmatrix} y^i \\ y^j \end{pmatrix} \right|^\beta \quad\mathrm{ a.s.}
  \end{align*}
  holds for all $x^i,y^i\in U_i$ and $x^j,y^j\in U_j$.
  
  Assumption \ref{as:Lipschitz2} includes a moment condition which amounts to finiteness of the sixth moment in case of the mean on Euclidean space. Again, the assumption holds in many cases including $L_p$ minimizers, principal geodesics and maximum likelihood estimators. Like Assumption \ref{as:Lipschitz}, it holds if the $\tau^i$ are differentiable in the first argument and their derivatives are bounded in neighborhoods of the $\mu^i$. If Assumption \ref{as:Lipschitz} is generalized by using quadruple inequalities as presented by \cite{Schoetz2019}, Assumption \ref{as:Lipschitz2} would have to be adjusted accordingly.
\end{Rm}

For our setting, we do not need a CLT for $\textnormal{Cov}[\tau(0,X)]$. Instead we restrict attention to a simpler result which is sufficient for our purposes.

\begin{Theorem}[M-Variance Covariance Asymptotics] \label{theo:varvarCLT}
  Under Assumptions \ref{as:BPSC}, \ref{as:finite}, \ref{as:local-manifold}, \ref{as:covar-rank} and \ref{as:Lipschitz2}, for a measurable selection of $\delta$-local sample $i$-descriptors $\nun^i$ and for any vector $v \in \mathbb{R}^m$ there is a $W_{v} \in \mathbb{R}^+$ such that
  \begin{align*}
    \sqrt{n} \Big(v^T \textnormal{Cov}[\tau_n^*(0,X^*)] v - v^T \textnormal{Cov}[\tau(0,X)] v\Big) \inD \mathcal{N} \left(0,W_{v} \right) \, ,
  \end{align*}
  with the variance given by
  \begin{align*}
    W_v = \textnormal{Var}\left[\left(v^T \tau(0, X) \right)^2\right] - 4 \|v\|_1 V C^v + 4 \|v\|_1^2 V^2 \textnormal{Var}\left[v^T \tau(0, X)\right] \, .
  \end{align*}
\end{Theorem}

\begin{proof}
  The proof can be found in Appendix~\ref*{subsec-supp:cov-clt}.
\end{proof}

Based on Corollaries \ref{cor:CLT-m-variance2} and \ref{cor:CLT-boot-m-variance} and Theorem \ref{theo:varvarCLT} we now get a convergence result for quantiles. For any $i,j \in \{1, 2, \dots, m\}$ with $i \neq j$ let $e_{ij} := (e_i -e_j)$, where $e_i$ denote canonical basis vectors as usual, and
\begin{align*}
  W^{ij} &= e_{ij}^T \textnormal{Cov}[\tau(0,X)] e_{ij} = \textnormal{Var}[\tau^i(0,X) - \tau^j(0,X)]\\
  \widehat{W}_n^{ij} &= e_{ij}^T \textnormal{Cov}[\tau_n^*(0,X^*)] e_{ij} = \textnormal{Var}\left[\tau^{*,1}_n(0, X^*) - \tau^{*,2}_n(0, X^*)\right] \, .
\end{align*}
Recall that $W^{ij}$ appears as asymptotic variance $\sqrt{n} \big(\Vn^i - \Vn^j \big) \inD \mathcal{N}\left( 0, W^{ij} \right)$ in Corollary \ref{cor:CLT-m-variance2} and analogously $\sqrt{k} \left( \left( V^{*,i}_{n,k} - V^{*,j}_{n,k} \right) - \left( \Vn^i - \Vn^j \right) \right) \inD \mathcal{N}\left( 0, \widehat{W}_n^{ij} \right)$ in Corollary \ref{cor:CLT-boot-m-variance}. Let $q$ be the standard normal quantile function and use the shorthand $q_n^{ij}\left(\frac{\alpha}{2} \right) := \frac{\sqrt{W^{ij}} q\left(\frac{\alpha}{2} \right)}{\sqrt{n}}$. In all of the following, we will always consider $\alpha \le 1$ and thus $q_n^{ij}\left(\frac{\alpha}{2} \right) \le 0$. With this notation, we define the following sets,
\begin{align*}
  A_{n,ij}^{\alpha,\eps} :=& \left\{x \in \mathbb{R}^m : (1 + \eps)q_n^{ij}\left(\frac{\alpha}{2} \right) \le x_i-x_j \le q_n^{ij}\left(\frac{\alpha}{2} \right) \right\}  & A_{n,i}^{\alpha,\eps} &= \bigcap_{\substack{j= 1 \\ j \neq i}}^m \limits A^{\alpha,\eps}_{n,ij}\\
  A_{n,ij}^{\alpha} :=& \left\{x \in \mathbb{R}^m : x_i-x_j \le q_n^{ij}\left(\frac{\alpha}{2} \right) \right\} & A_{n,i}^\alpha &= \bigcap_{\substack{j= 1 \\ j \neq i}}^m \limits A^\alpha_{n,ij} \, .
\end{align*}
Then we have the following result

\begin{Theorem}[Convergence of Quantiles] \label{theo:quantileLLN}
  Under Assumptions \ref{as:BPSC}, \ref{as:finite}, \ref{as:local-manifold}, \ref{as:covar-rank} and \ref{as:Lipschitz2} there exists a sequence $\{\eps_n\}$ such that $\lim_{n\to\infty}\limits\eps_n=0$ and for $n \to \infty$
  \begin{align}
    \lim_{n \to \infty} \limits \sum_{i=1}^m \mathbb{P} \left(\Vn \in A_{n,i}^{\alpha} \right) - \alpha & \le 0 & \textnormal{for } & m \in \{2,3\}, \, \alpha \in [0,1] \label{eq:quant1}\\
    \lim_{n \to \infty} \limits \sum_{i=1}^m \mathbb{P} \left(\Vn \in A_{n,i}^{\alpha} \right) - \alpha & \le 0 & \textnormal{for } & m \ge 4, \, \alpha \textnormal{ sufficiently small} \label{eq:quant1b}\\
    \lim_{n \to \infty} \limits 2 \cdot \mathbb{P} \left(V_{n,n}^{*} \notin A_{n,i}^1 \, \middle| \, \Vn \in A_{n,i}^{\alpha,\eps_n} \right) - \alpha & \ge 0 \label{eq:quant2} & \textnormal{for } & m \ge 2, \, \alpha \in [0,1]  \, .
  \end{align}
  The probability in equation \eqref{eq:quant2} is understood as a probability over all samples and corresponding bootstrap samples. Thus it is a deterministic value.
\end{Theorem}

\begin{proof}
  The proof can be found in Appendix~\ref*{subsec-supp:quantiles}.
\end{proof}

\begin{Rm} \label{rm:quantiles}
  Theorem \ref{theo:quantileLLN} can be understood in terms of a hypothesis testing framework. Assume the null hypothesis that there are a number $m>1$ of global minimizers $\mu^i$. Then the sum $\sum_{i=1}^m \limits \mathbb{P} \left(\Vn \in A_{n,i}^\alpha \right)$ in equation \eqref{eq:quant1}, denoting the total probability that any $\Vn^i$ is smaller by at least the absolute value of the asymptotic $\frac{\alpha}{2}$ quantile than all other $\Vn^j$, can be understood as type I error probability.
  
  Equation \eqref{eq:quant2} then describes an asymptotically conservative (i.e. overestimating the error of first kind) estimator of the error of first kind. For $m=2$ the bounds are sharp and thus the test has size $\alpha$. The probability $\lim_{\eps \to 0} \limits \mathbb{P} \left(V_{n,n}^{*} \notin A_{n,i}^1 \, \middle| \, \Vn \in A_{n,i}^{\alpha,\eps} \right)$ is the probability that the bootstrap variance $V_{n,n}^{*,i}$ is not the smallest, but there is some $V_{n,n}^{*,k} < V_{n,n}^{*,i}$ despite conditioning on the fact that $\Vn^i$ is smaller by the $\frac{\alpha}{2}$ quantile than all other $\Vn^j$, which is the boundary of the region defining the type I error as explained above. This means that the bootstrap minimizer $\mu_{n,n}^*$ will not be close to the sample minimizer $\mun = \mun^i$ but to the local minimizer $\mun^k$.
\end{Rm}

\section{Hypothesis Test}

Many data analysis tools, including $T^2$-like tests and principal component analysis require the population descriptor $\mu$ to be unique. It is therefore useful to be able to determine with confidence whether the descriptor $\mu$ is unique. Wrongly assuming a unique $\mu$ leads to wrong assumptions about the distributions of test statistics and misplaced confidence in estimated values. This error is therefore more problematic than wrongly assuming a non-unique descriptor. Therefore, with Assumptions \ref{as:finite}, we design a test with null hypothesis and alternative as follows
\begin{align*}
  H_0 \, &: \, |E| \ge 2 & H_1 \, &: \, |E| = 1 \, .
\end{align*}

\begin{Rm}
  At first glance, it may seem as if the null hypothesis is considerably larger than the alternative. Indeed, the space describing tuples of multiple descriptors has higher dimension than the space describing a unique descriptor. In the present setting, where any finite number of descriptors in permissible in the null hypothesis, the space even has infinite dimension.
  
  However, these considerations of parameter space dimension are misleading if one is trying to determine, whether the null hypothesis or the alternative is larger. Since the test gives a result about a probability measure, the relevant question is whether probability measures with unique population descriptors are more abundant or more rare than those with non-unique population descriptors in the model under consideration.
  
  In the models considered here, one may intuitively expect that almost all probability measures have unique population descriptors. However, a rigorous formulation and proof of such a hypothesis, which would require a considerably more complex result in the vain of \cite{AM14} but for population descriptors, appears currently elusive. 
\end{Rm}

To approach the definition of the test statistic, we recall Remark \ref{rm:quantiles}. We can thus get an asymptotically conservative test statistic as follows. Determine the sample descriptor $\mun$, and simulate a large number $B$ of n-out-of-n bootstrap samples whose estimated descriptor values are denoted as $\big\{\mu^{*1}_{n,n}, \dots, \mu^{*B}_{n,n}\big\}$. Then, count the number $n_{\textnormal{far}}$ of $\mu^{*j}_{n,n}$ which are ``far from'' $\mun$. To make this last step precise, note that according to Assumption \ref{as:BPSC} the set $\big\{\mu^{*1}_{n,n}, \dots, \mu^{*B}_{n,n}\big\}$ always contains one cluster which includes $\mun$ but under the null hypothesis Assumption \ref{as:covar-rank} ensures that it contains additional clusters. Our goal is to identify additional clusters and count the number of points therein. To this end, we can define
\begin{align}
  d_j := d (\mun, \mu^{*j}_{n,n}) + X_j \label{eq:dists1}
\end{align}
with the metric $d$ of $\mathcal{P}$ and $X_j \iid N(0,n^{-1/2})$, even if $\mathcal{P}$ does not have a manifold structure.

Equation \eqref{eq:dists1} provides a way to map the bootstrap descriptors to $\mathbb{R}^+$. Next, we apply the multiscale test described by \cite{DW08} to identify slopes in the set $\big\{d_1, \dots, d_B\}$. This test is designed to estimate intervals containing rising and falling slopes of a one-dimensional probability density based on an i.i.d. sample from said density. The density in the present case is implicitly defined as the density of distances $d_j$ determined via bootstrap estimated from the data sample. The choice to project the bootstrap descriptors to one dimension and use the method by \cite{DW08} is not arbitrary, but it is made because this method provides uniform size and power properties, which we use in our own size and power considerations.

Figure \ref{fig:cluster-illustration} illustrates typical distributions of $d_j$. It shows that the cluster containing $\mun$ is characterized by the falling slope closest to $0$. However, this falling slope may be preceded by a rising slope due to the spherical volume element such that the first slope starting from $0$ is due to the cluster containing $\mun$. Thus, we look for the first interval containing a rising slope which begins at a higher value of $d_j$ than the earliest end point of an interval containing a falling slope. Call the first point of this rising slope interval $d_+$.

\begin{figure}[ht!]
  \centering
  \subcaptionbox{Dimension $p=1$}[0.45\textwidth]{\includegraphics[width=0.45\textwidth]{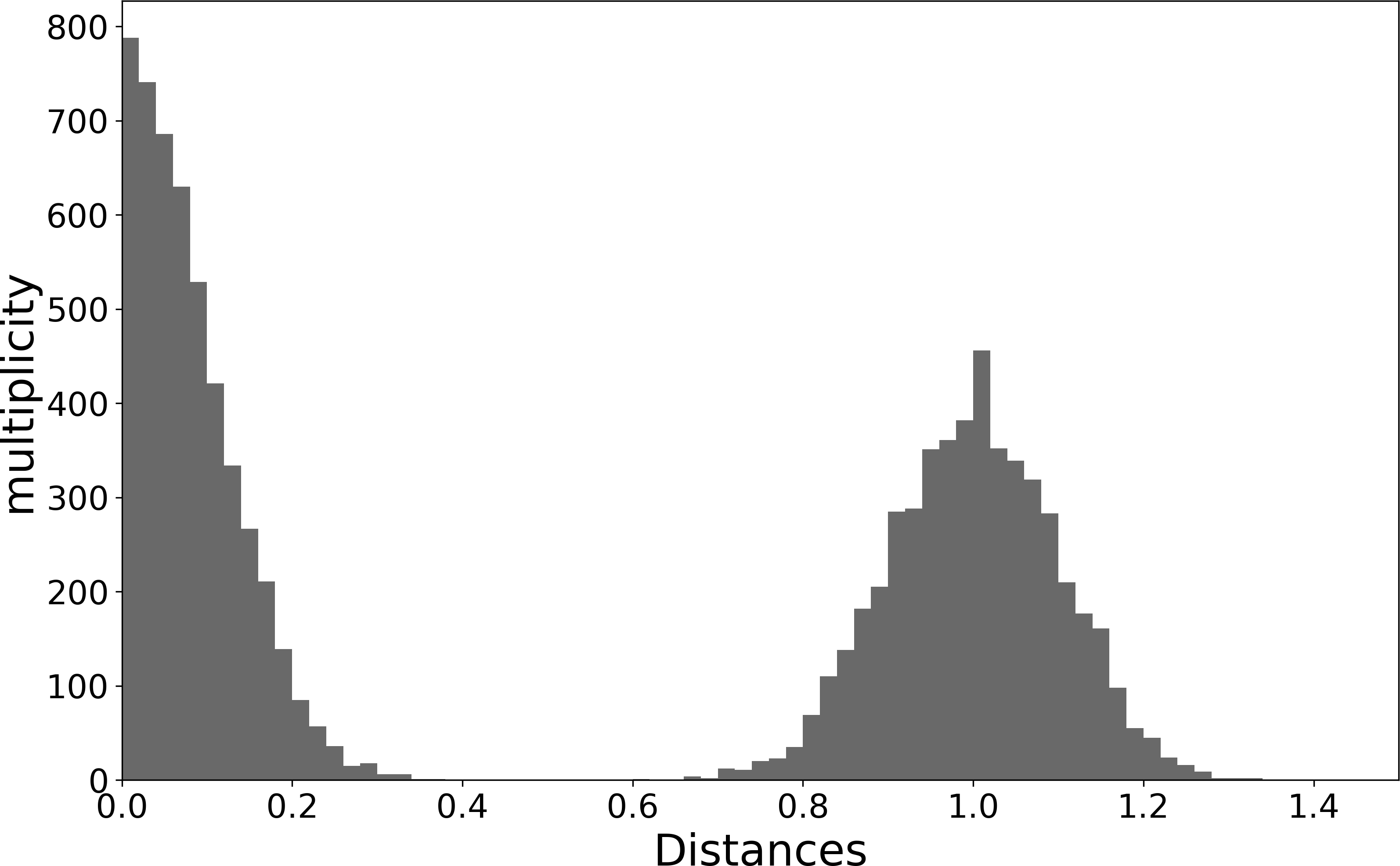}}
  \hspace*{0.02\textwidth}
  \subcaptionbox{Dimension $p=2$}[0.45\textwidth]{\includegraphics[width=0.45\textwidth]{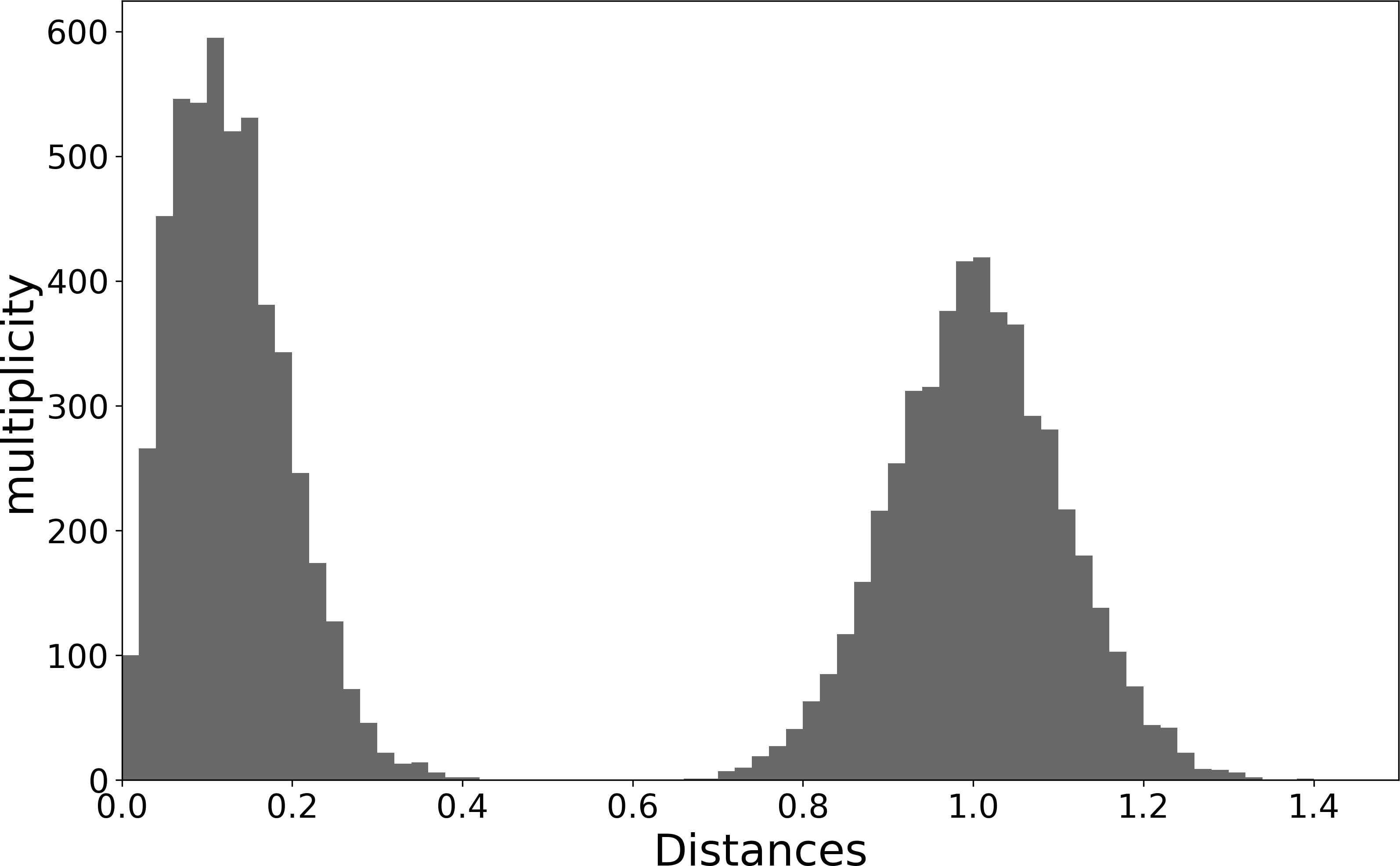}}
  \caption{Typical distributions of $d_j$ in case that the population Fr\'echet function has two minima. In the one-dimensional case, the cluster containing $\mun$ appears as a falling slope and the second cluster as a full mode. However, in the two-dimensional case, the cluster containing $\mun$ also appears as a full mode with rising and falling slope due to the volume element.
    \label{fig:cluster-illustration}}
\end{figure}

We define the hypothesis test for non-uniqueness as follows.

\begin{Test} \label{test:non-unique}$ $\\
  \textnormal{\textbf{Null hypothesis and alternative:}}
  \begin{itemize}
    \item[] $H_0 \, : \, |E| \ge 2$, $\quad H_1 \, : \, |E| = 1$.
  \end{itemize}
  \textnormal{\textbf{Test statistic:}}
  \begin{itemize}
    \item[] $T_d := \frac{2}{B} |\{d_j : d_j > d_+\}|$.
  \end{itemize}
  \textnormal{\textbf{Rejection region and p-value:}}
  \begin{itemize}
    \item[] Reject if $T_d < \alpha$, p-value $p_d = \min\left\{ 1, T_d \right\}$.
  \end{itemize}
\end{Test}

\begin{Rm}
  As remarked above, the result by \cite{AM14}, which shows almost sure uniqueness of Fr\'echet means under general conditions, may lead one to expect that a similar result could be shown for general M-estimators. However, in case that the sample descriptor is not unique, we propose preferring the sample descriptor which is located within the largest cluster of bootstrap descriptors. To this end, one could perform the test with each of the sample descriptor and pick the smallest p-value, since that should place the sample descriptor in the largest cluster of bootstrap descriptors. In all simulations and applications presented here, the sample descriptor was always unique.
\end{Rm}

In order to state asymptotic size and power results for this hypothesis test, we introduce the notation $\alpha_\textnormal{MS}$ for the level of the multiscale test by \cite{DW08} and $\alpha_\textnormal{ND}$ for the level of the non-unique descriptors test. For clarity of our argument, we here reproduce Theorem~4.1~(a) from \cite{DW08} and an equivariance property stated in \cite{DW08}~Section~4.

\begin{Theorem}[\cite{DW08}~Theorem~4.1~(a)] \label{theorem:multiscale-power}
  Let $\{I_n \}_{n \in \mathbb{N}}$ be a sequence of bounded intervals and consider sequences $\{\delta_n \in (0,1] \}_{n \in \mathbb{N}}$ and $\{ C_n > 0 \}_{n \in \mathbb{N}}$ such that $C_n \sqrt{1- \log \delta_n} \to \infty$. Then consider a sequence of densities $f_n$ whose cumulative distribution functions satisfy $F_n(I_n) = \delta_n$ and $H(f_n,I_n) := \inf_{x \in I_n} \frac{|I_n|^2f_n'(x)}{\sqrt{F_n(I_n)}} > (C_n + \sqrt{24}) \sqrt{1- \log \delta_n} n^{-1/2}$. Then, the probability of the test by \cite{DW08} to detect a rising slope within $I_n$ goes to $1$ for $n \to \infty$.
\end{Theorem}

\begin{Lem}[Equivariance property stated in \cite{DW08}~Section~4] \label{lemma:multiscale-equivar}
  Assume $\mu \in \mathbb{R}$ and $\sigma > 0$, $I_{\mu,\sigma} := [\mu, \mu+\sigma]$ and define $f_{\mu,\sigma}(x) := \sigma^{-1} f_{0,1}(\sigma^{-1} (x - \mu))$, then\\ $H(f_{0,1},I_{0,1}) := \inf_{x \in I_{0,1}} \frac{|I_{0,1}|^2 f_{0,1}'(x)}{\sqrt{F_{0,1}(I_{0,1})}} = H(f_{\mu,\sigma},I_{\mu,\sigma})$.
\end{Lem}

\begin{Lem} \label{lemma:density-convergence}
  Let $X$ be an $\mathbb{R}$-valued random variable with smooth Lebesgue probability density function $f$ and assume $f$, $f'$ and $f''$ are $L^1$-bounded. Let $\{Y_n\}_{n \in \mathbb{N}}$ be a sequence of $\mathbb{R}$-valued random variables with cumulative distribution functions $G_n$ which converge uniformly in distribution to a random variable $Y$ with continuous distribution function $G$. Then the random variable $Z_n := X + Y_n$
  \begin{enumerate}[(i)]
    \item converges in distribution to $Z := X + Y$,
    \item has a $C^1$ Lebesgue probability density function $j_n : x \mapsto \int_{-\infty}^{\infty} f'(x -y) G_n (y) dy$,
    \item $j_n$ converges to $j : x \mapsto \int_{-\infty}^{\infty} f'(x - y) G (y) dy$ in the $L^{\infty}$-norm,
    \item $j'_n$ converges to $j'$ in $L^{\infty}$-norm.
  \end{enumerate}
\end{Lem}

\begin{proof} \phantom{ }
  \begin{enumerate}[(i)]
    \item The distribution functions of $Z_n$ and $Z$ can be expressed as
    \begin{align*}
      J_n :&~ x \mapsto \int_{-\infty}^{\infty} f(x -y) G_n (y) dy & J :&~ x \mapsto \int_{-\infty}^{\infty} f(x -y) G (y) dy
    \end{align*}
    Thus we get
    \begin{align*}
      |J_n(x) - J(x)| =&~ \left|\int_{-\infty}^{\infty} f(x - y) (G_n(y) - G(y)) dy \right|\\
      \le&~ \int_{-\infty}^{\infty} f(x - y) \cdot |G_n(y) - G(y) | dy \le \sup_{y \in \mathbb{R}} |G_n(y) - G(y) | \to 0 \, ,
    \end{align*}
    Which proves the claim.
    \item One calculates $j_n(x) = J'_n(x) = \int_{-\infty}^{\infty} f'(x -y) G_n (y) dy$.
    \item Similarly to (i), we see
    \begin{align*}
      |j_n(x) - j(x)| =&~ \left|\int_{-\infty}^{\infty} f'(x - y) (G_n(y) - G(y)) dy \right|\\
      \le&~ \int_{-\infty}^{\infty} |f'(x - y)| \cdot |G_n(y) - G(y) | dy\\
      \le&~ \sup_{y \in \mathbb{R}} |G_n(y) - G(y) | \cdot \int_{-\infty}^{\infty} |f'(x - y)| dy \to 0 \, ,
    \end{align*}
    where we use the $L^1$-boundedness of $f'$.
    \item This follows analogously to (iii) by using the $L^1$-boundedness of $f''$. \qedhere
  \end{enumerate}
\end{proof}

In order to apply this last lemma, we need a regularity result for the asymptotic distribution of the distances $d (\mun, \mu^{*,i}_{n,n})$.

\begin{Lem} \label{lemma:continuous-distribution}
  Under Assumptions \ref{as:BPSC}, \ref{as:finite} with $m \ge 2$, \ref{as:local-manifold}, \ref{as:covar-rank}, \ref{as:Lipschitz2}, as well as Assumption \ref*{as:Taylor} in Appendix~\ref*{subsec-supp:aux-clt} holding for every $\mu^i$ with any $r = 2$, the distribution of $d (\mun, \mu^{*,i}_{n,n})$ conditioned on $\forall k \neq i : \, V^{*,i}_{n,n} < V^{*,k}_{n,n}$ converges uniformly in distribution to a random variable $D^i$ with continuous distribution function.
\end{Lem}

\begin{proof}
  Applying the generalized CLT from \cite{EH19}, we conclude that for every $i$, the distribution of $\log_{\mun^i}(\mu^{*,i}_{n,n})$ converges to the normal distribution. Conditioning on $\forall k \neq i : \, V^{*,i}_{n,n} < V^{*,k}_{n,n}$ preserves the continuity of the distribution since it cannot introduce point masses. Lastly, the continuity of the distribution function carries over to the $d (\mun, \mu^{*,i}_{n,n})$, since the asymptotic distribution of the $\mu^{*,i}_{n,n}$ is not concentrated on spheres around $\mun$. The uniform convergence then follows by Egorov's Theorem.
\end{proof}

Now, we state our theorem for the asymptotic test size, which is proved using the above Theorem and Lemmas.

\begin{Theorem}[Asymptotic Test Size] \label{theorem:size} $ $
  Under Assumptions \ref{as:BPSC}, \ref{as:finite} with $m \ge 2$, \ref{as:local-manifold}, \ref{as:covar-rank}, \ref{as:Lipschitz2}, as well as Assumption \ref*{as:Taylor} in Appendix~\ref*{subsec-supp:aux-clt} holding for every $\mu^i$ with any $r = 2$ and for sufficiently small $\alpha_\textnormal{ND}$, Test \ref{test:non-unique} satisfies in the limit $n \to \infty$, $B \to \infty$ and $\alpha_\textnormal{MS} \to 0$
  \begin{align*}
    \mathbb{P} (T_d \ge \alpha_\textnormal{ND})& \to C \ge 1 - \alpha_\textnormal{ND}
  \end{align*}
  with $C = 1 - \alpha_\textnormal{ND}$ for $m=2$.
\end{Theorem}
\begin{proof}
  From Theorem \ref{theo:quantileLLN} and Remark \ref{rm:quantiles} it becomes clear that, if $d_+$ can be determined asymptotically with probability $1$, these test statistics have asymptotically the correct size for $m=2$ and may be conservative otherwise. To show that the second mode in the distribution of $d_j$ is found with probability $1$ for $B \to \infty$ we apply Theorem \ref{theorem:multiscale-power} as well as Lemmas \ref{lemma:multiscale-equivar}, \ref{lemma:density-convergence}, \ref{lemma:continuous-distribution}.
  
  We now consider all minima of the loss function, corresponding to possible modes in the distribution of the $d_j$, separately. For each $i$, we thus condition on $\forall k \neq i : \, V^{*,i}_{n,n} < V^{*,k}_{n,n}$ in all of the following, i.e. we restrict attention to the points contributing to the $i$-th mode. Furthermore, we introduce the shorthand $\overline{d}^i := \mathbb{E} [d (\mun, \mu^{*,i}_{n,n})]$. The distribution of distances $d (\mun, \mu^{*,i}_{n,n})$ converges to $\overline{d}^i + \mathcal{O}_P(n^{-1/2})$ according to Lemma \ref{lemma:continuous-distribution}, so in particular $\overline{d}^i$ is well-defined. This allows us to define the sequence of random variables
  \begin{align*}
    Z^i_n := n^{1/2} \left( d (\mun, \mu^{*,i}_{n,n}) - \overline{d}^i + X \right)
  \end{align*}
  with $X \sim N(0,n^{-1/2})$. According to Lemma \ref{lemma:density-convergence}, each of the random variables $Z^i_n$ has a continuous first derivative of a density function, which converges uniformly to the first derivative of density function of the random variable $Z^i: = \lim_{n\to \infty} \limits Z^i_n$, which exists according to Lemma \ref{lemma:continuous-distribution}. For any interval $I^Z \subset \mathbb{R}$ the sequences $F^Z_n(I^Z)$ and $H(f_n,I^Z)$ then converge to constants. Therefore, using for example $C_n = n^{1/4}$ and $B \propto n$, the conditions for Theorem~\ref{theorem:multiscale-power} are satisfied by every interval $I^Z$ for which the density of $Z^i$ is strictly positive.
  
  Now, using Lemma \ref{lemma:multiscale-equivar}, the random variable
  \begin{align*}
    W^i_n :=  d (\mun, \mu^{*,i}_{n,n}) + X
  \end{align*}
  has the exact same value of the function $H$ introduced in Theorem~\ref{theorem:multiscale-power} as the random variable $Z^i_n$ under appropriate equivariant transformation of $I^Z$ to $I^W$ and $F^W_n(I^W) = F^Z_n(I^Z)$. In consequence, the conditions for Theorem~\ref{theorem:multiscale-power} are satisfied by every interval $I^W = [a,b]$ such that for the corresponding transformed interval $I^Z := [n^{1/2}(a - \overline{d}^i), n^{1/2}(b - \overline{d}^i)]$ the density of $Z^i$ is strictly positive.
  
  In consequence, the rising slope (and by completely analogous arguments also the falling slope) of every mode in the distribution of the $d_j$ will be identified by the multiscale test with asymptotic probability $1$ if one takes $\alpha_\textnormal{MS} \to 0$ sufficiently slowly such that the rejection region still goes to zero for $n \to \infty$, which means that the fraction of bootstrap descriptors with $d_j > d_+$ will asymptotically never be underestimated, leading to an at worst conservative test. The claim follows.
\end{proof}

The size result for Test~\ref{test:non-unique} builds on the power result for the multiscale test by \cite{DW08}. Conversely, a power result for Test~\ref{test:non-unique} builds on the asymptotic test size from \cite{DW08}. However, for Test~\ref{test:non-unique} to reject the null-hypothesis, the properly rescaled asymptotic distribution of $d_j$ must be unimodal. In analogy to the proof of Theorem~\ref{theorem:size}, define the random variable
\begin{align*}
  Z_n := n^{1/2} \left( d (\mun, \mu^{*}_{n,n}) - \mathbb{E} [d (\mun, \mu^{*}_{n,n})] + X \right) = n^{1/2} \left( \| \log_{\mun}(\mu^{*}_{n,n}) \| + X \right)
\end{align*}
with $X \sim N(0,n^{-1/2})$.

If the random variable $\| \log_{\mun}(\mu^{*}_{n,n}) \|$ to converge to a unimodal random variable, the probability that only one mode is found in the distribution of $Z_n$ to go to $1$ for $n \to \infty$. To this end we make an additional assumption.

\begin{As}\label{as:unimodal}
  Under Assumptions \ref{as:BPSC}, \ref{as:finite} with $m = 1$, \ref{as:local-manifold}, \ref{as:covar-rank}, \ref{as:Lipschitz2} Assumption \ref*{as:Taylor} in Appendix~\ref*{subsec-supp:aux-clt} with $r=2$ in the notation of Assumption \ref*{as:Taylor}, i.e. a unique non-smeary descriptor, the CLT covariance
  \begin{align*}
    \Sigma := \lim_{n \to \infty} \textnormal{Cov}[n^{1/2} \log_{\mu}(\mun)]
  \end{align*}
  has the property that for $Y \sim N(0,\Sigma)$ the random variable $|Y|$ has a unimodal density.
\end{As}

We conjecture that this assumption is tautological and the required unimodality holds for every positive semidefinite $\Sigma$. However, while this conjecture looks deceptively simple, it does not appear to be easily provable using previous results like \cite{Khinchine1938,OlshenSavage1970,Moschopoulos1985}.

\begin{Theorem}[Asymptotic Test Power] \label{theorem:power} $ $
  Under Assumptions \ref{as:BPSC}, \ref{as:finite} with $m = 1$, \ref{as:local-manifold}, \ref{as:covar-rank}, \ref{as:Lipschitz2}, \ref{as:unimodal}, Assumption \ref*{as:Taylor} in Appendix~\ref*{subsec-supp:aux-clt} with $r=2$ in the notation of Assumption \ref*{as:Taylor}, i.e. a unique non-smeary descriptor, and for $\alpha_\textnormal{ND} \in (0,1)$, Test~\ref{test:non-unique} satisfies in the limit $n \to \infty$, $B \to \infty$ and the level of the multiscale test $\alpha_\textnormal{MS} \to 0$
  \begin{align*}
    \mathbb{P} (T_d < \alpha_\textnormal{ND})& \to 1
  \end{align*}
\end{Theorem}

\begin{proof}
  According to Lemma \ref{lemma:density-convergence}, each of the random variables $Z_n$ has a continuous first derivative of a density function, which converges uniformly to the first derivative of density function of the random variable $Z: = \lim_{n\to \infty} \limits Z_n$, which exists according to the CLT. Now, using Lemma \ref{lemma:multiscale-equivar}, the random variable
  \begin{align*}
    W_n :=  d (\mun, \mu^{*}_{n,n}) + X
  \end{align*}
  has the exact same value of the function $H$ introduced in Theorem~\ref{theorem:multiscale-power} as the random variable $Z_n$ under appropriate equivariant transformation of $I^W = [a,b]$ to $I^Z := [n^{1/2}(a - \mathbb{E} [d (\mun, \mu^{*}_{n,n})]), n^{1/2}(b - \mathbb{E} [d (\mun, \mu^{*}_{n,n})])]$ whenever the density of $Z$ is strictly positive in $I^Z$. Thus it suffices to consider the random variables $Z_n$ and $Z$. Due to Assumption \ref{as:unimodal}, $Z$ is unimodal. Now, let $\zeta_n$ be the maximal deviation of the derivative of the density function of $Z_n$ from the derivative of the density function of $Z$. We use $B \propto n$  again and define for $\alpha_{\textnormal{MS}, n}$ the probability $p_n$ that a slope of steepness $\zeta_n$ is found at the level $\alpha_{\textnormal{MS}, n}$. Since $\zeta_n \to 0$, $p_n \to 0$ if $\alpha_{\textnormal{MS}, n}$ decreases slowly enough with $n$. The claim follows.
\end{proof}

\begin{Rm}
  Note that Theorems \ref{theorem:size} and \ref{theorem:power} exclude three cases, for which Test~\ref{test:non-unique} may have insufficient power.
  \begin{enumerate}
    \item If no asymptotic result can be shown, i.e. if either of the Assumptions \ref{as:BPSC}, \ref{as:local-manifold}, \ref{as:Lipschitz} or \ref{as:Lipschitz2} is violated, none of the considerations shown here are applicable.
    \item If the covariance of the M-variances does not have full rank, i.e. Assumption \ref{as:covar-rank} is violated, then there are vectors $v$ such that $v^T \textnormal{Cov}[\tau(0,X)] v = 0$, which undermines Theorem \ref{theo:varvarCLT}. In light of Theorem \ref{theo:quantileLLN}, one might weaken Assumption \ref{as:covar-rank}, and Theorem \ref{theo:varvarCLT} accordingly, to only require $\textnormal{Var}[\tau^i(0,X) - \tau^j(0,X)] > 0$ for any $i \neq j$. If this is violated, the M-variance of the $i$-th and $j$-th local Fr\'echet mean are always the same, so either both of them or neither are global sample descriptors.
    \item If \ref{as:finite} does not hold and the population descriptor is a continuous subset of parameter space, the test is not applicable. If an asymptotic distribution of descriptors on the population descriptor set can be shown, the properties of this distribution will determine the result of the test.
    \item If $m=1$ and Assumption \ref*{as:Taylor} in Appendix~\ref*{subsec-supp:aux-clt} holds with $r>2$, i.e. there is a unique smeary descriptor, our test is not guaranteed to reject non-uniqueness. In fact, on the circle and the torus the asymptotic distribution of smeary means was shown to be multimodal by \cite{HH15,Hun17} which means that the test will asymptotically not reject the null hypothesis. Furthermore, since in all cases of smeariness described so far, probability measures with smeary descriptors are always adjacent to measures with non-unique descriptors, the lack of power of the test in this case appears natural.
  \end{enumerate}
\end{Rm}

In the following section, we show simulations exemplifying the performance of Hypothesis Test \ref{test:non-unique} in terms of size and power for finite sample size $n$. In the final section we apply the test to various data examples to illustrate its range of applicability. The examples comprise the Fr\'echet mean on manifolds, non-linear regression and clustering. The interpretation of test results varies according to the data application and is highlighted in each example for clarity.

\section{Simulations}\label{section-supp:simulations}

In this section we provide numerical results for performance of Hypothesis Test \ref{test:non-unique} under the null hypothesis for $m=2$ and some alternatives close to the null. The simulations consider Fr\'echet means on $S^p$ with $p \in \{1,2,5\}$. For the tests in case of the null hypothesis, we use on $S^1$ a mixture of two wrapped normal distributions, denoted by $N_w$
\begin{align*}
  X \sim 0.5 N_{w}\left(0,\frac{\pi}{50}\right) + 0.5 N_{w}\left(\pi,\frac{\pi}{50}\right)
\end{align*}
which has means $\pm \frac{\pi}{2}$ and for higher dimensions $p>1$ we use, denoting the normal distribution by $N$
\begin{align*}
  Y &\sim N\left(0,\frac{10^{-6}}{p^2}\right) & Z &\sim U_{S^{p-1}} & X & \sim \begin{pmatrix} Y \\ \sqrt{1-Y^2} Z \end{pmatrix}
\end{align*}
which has means $(\pm 1, 0, \dots, 0)^T$, i.e. two opposite poles.

\begin{figure}[h!]
  \centering
  \includegraphics[width=0.4\textwidth]{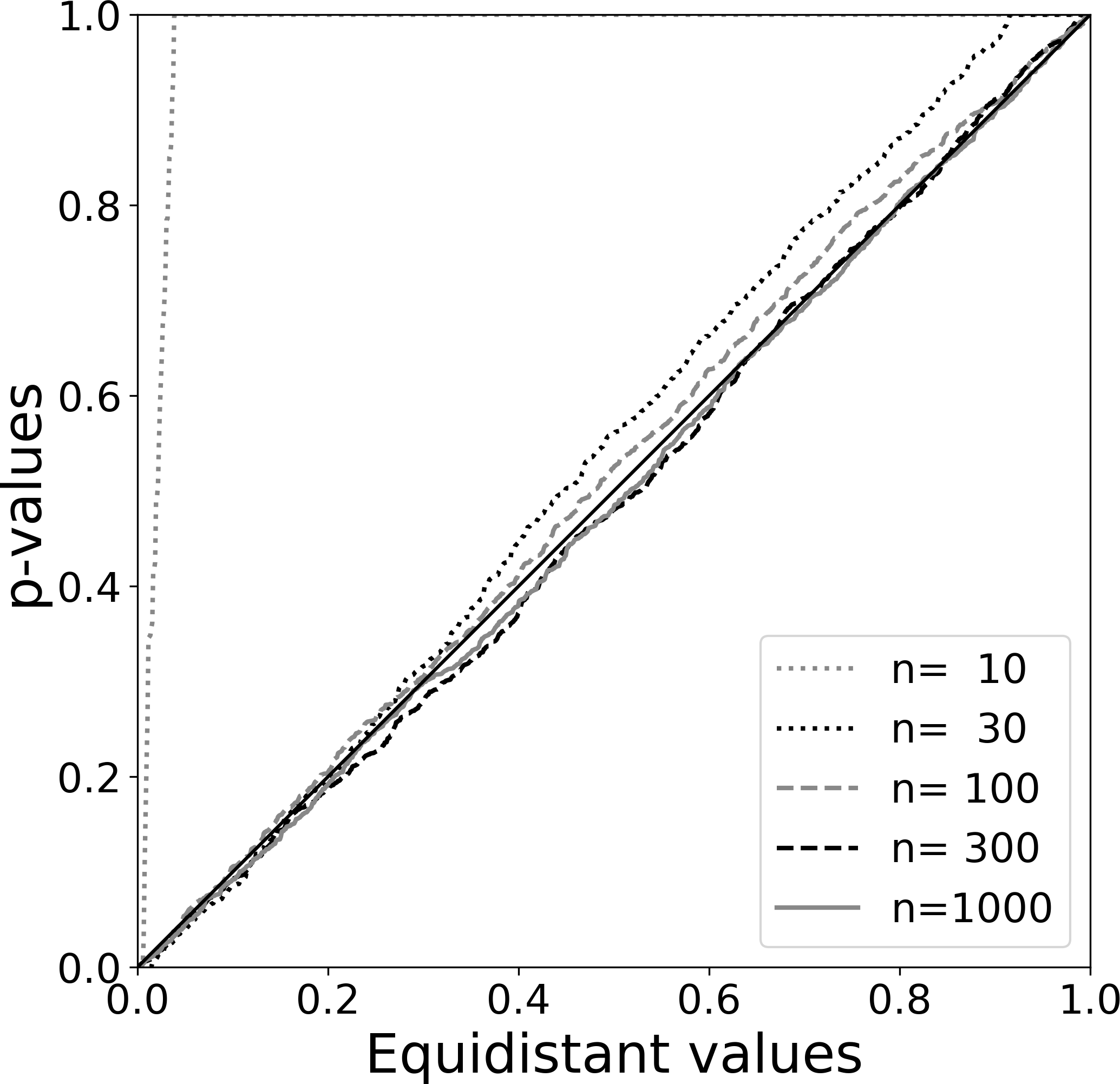}
  \caption{$T=1\,000$ sorted p-values for the null hypothesis in dimension $p=1$ plotted against equidistant values for sample sizes $n = 10,30,100,300,1000$. $B=10\,000$ were used for the test. While the test is noticeably conservative for $n=10$ it attains the theoretical size very well for $n \ge 30$.\label{fig:test-size-1d}}
\end{figure}

\begin{figure}[h!]
  \centering
  \subcaptionbox{$p_d$ for $p=2$}[0.4\textwidth]{\includegraphics[width=0.33\textwidth]{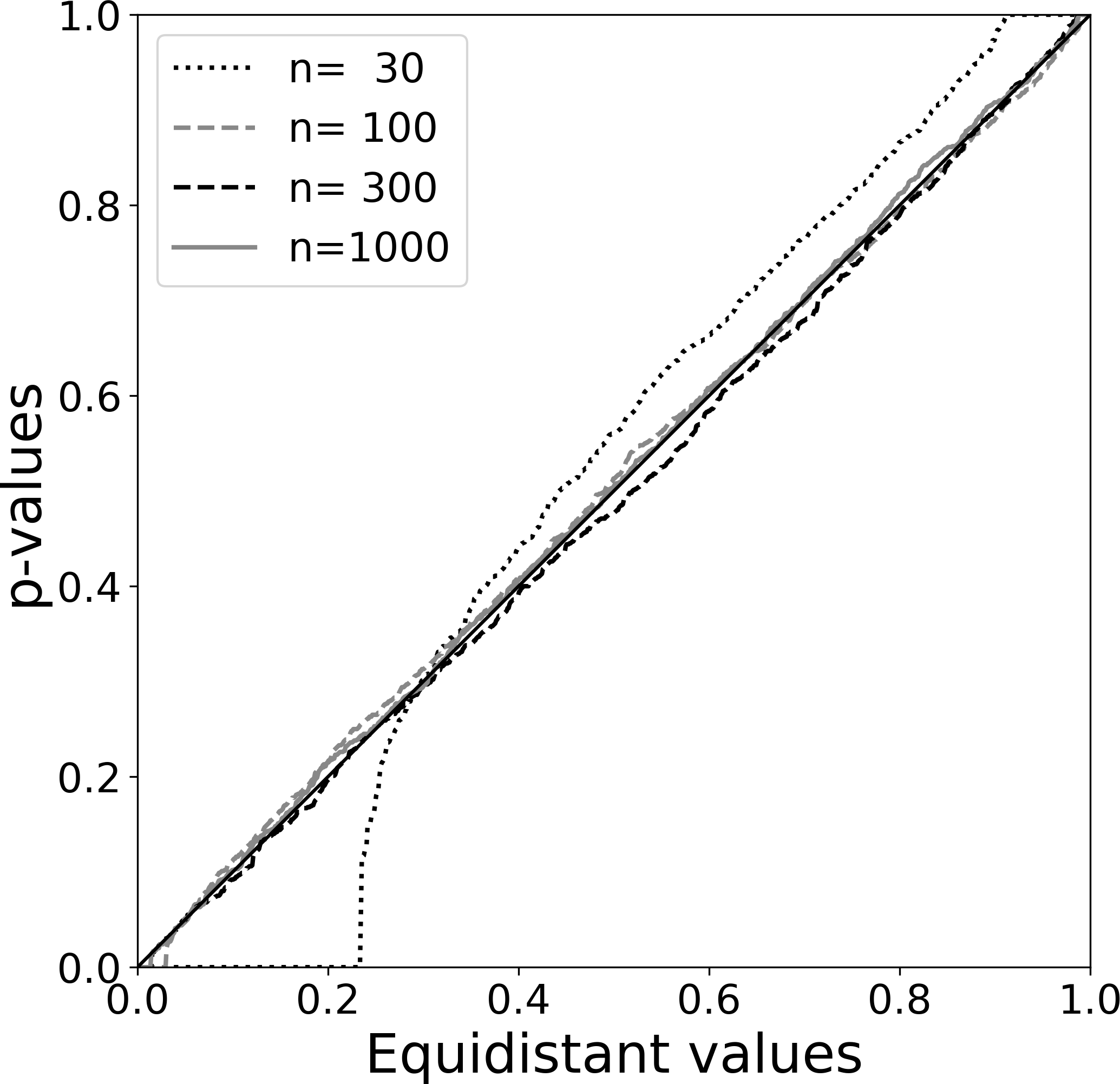}}
  \hspace*{0.02\textwidth}
  \subcaptionbox{$p_d$ for $p=5$}[0.4\textwidth]{\includegraphics[width=0.33\textwidth]{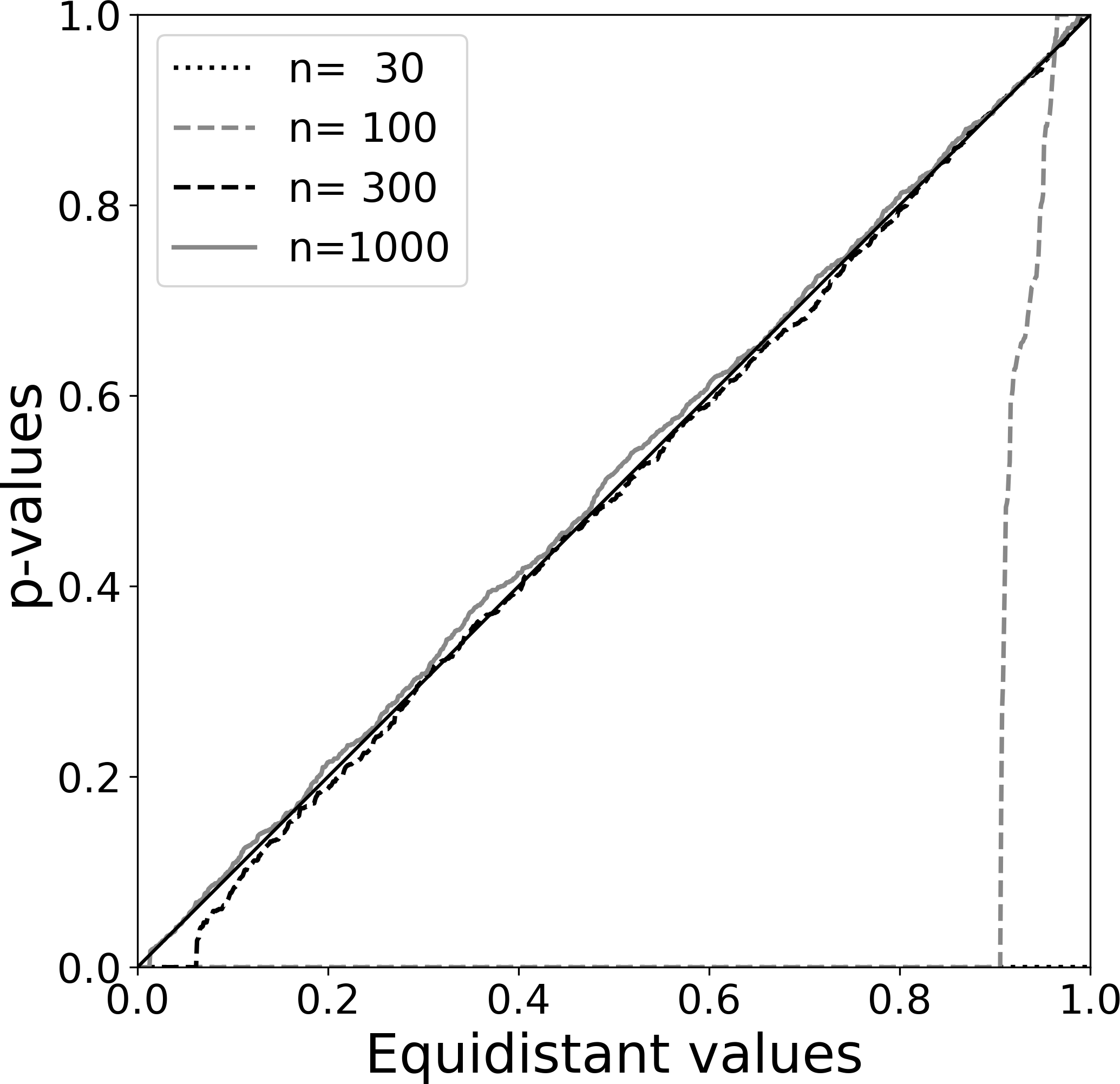}}
  \caption{$T=1\,000$ sorted p-values for the null hypothesis in dimensions $p=2,5$ plotted against equidistant values for sample sizes $n = 30,100,300,1000$. $B=2\,000$ were used for the test. For $p=2$ the test appears to achieve the correct size for $n \ge 100$, while for $p=5$ it is preferable to have $n \ge 1000$.
    \label{fig:test-size-23d}}
\end{figure}

For $p=1$ we use $B=10\,000$ and otherwise $B=2\,000$. We then perform tests on $T=1\,000$ samples for each sample size and display the resulting sorted p-valued in a pp-plot in Figures \ref{fig:test-size-1d}, and \ref{fig:test-size-23d}. The test approaches the size predicted by the asymptotics for large $n$, and in the one-dimensional situation the test achieves very good performance already for $n=30$.

Since the multiscale test by \cite{DW08} works best if modes consist of $\gsim 50$ points, p-values below $0.05$ are often underestimated to be $0$ when $B=1\,000$, therefore it is advisable to use $B=10\,000$ in data applications. Furthermore, we observe that for small sample sizes and $p>1$ many sample means are close to the equator, $e_1^T \mun \approx 0$. This effect becomes more pronounced with increasing dimension $p$. Therefore, the tests require increasing sample size with increasing dimension in our setting to avoid being too liberal. In our simulations $n \gsim 300$ is sufficient to achieve good performance. In other scenarios dimension dependence can vary but in general greater sample sizes are needed in higher dimension.

\begin{figure}[h!]
  \centering
  \includegraphics[width=0.6\textwidth]{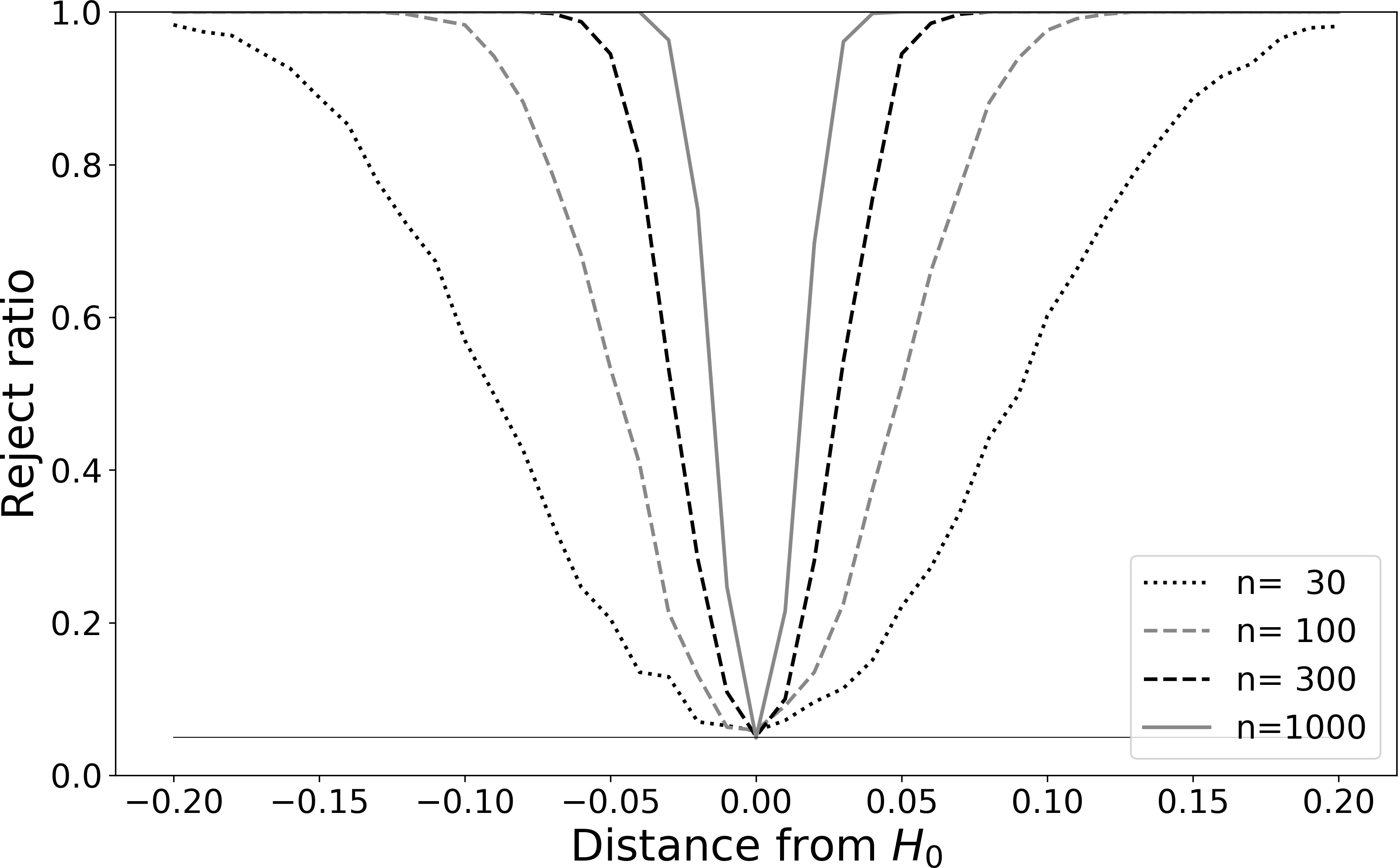}
  \caption{Ratio of rejecting tests for dimension $p=1$ using $T=1\,000$ tests to level $\alpha = 0.05$ with $B=2\,000$ bootstrap samples each for distributions close to the null hypothesis as defined in Equation \eqref{eq:test-alt-dists}. We see that the power increases rapidly with $n$ as expected and the correct size is attained at the null hypothesis for large enough $n$ with good results already for $n=30$.\label{fig:test-power-1d}}
\end{figure}

Next, we investigate the power of the test in dimension $p=1$ for distributions close to the null hypothesis. We use on $S^1$ a mixture of two wrapped normal distributions, denoted by $N_w$
\begin{align}
  X \sim 0.5 N_{w}\left(a,\frac{\pi}{50}\right) + 0.5 N_{w}\left(\pi-a,\frac{\pi}{50}\right) \label{eq:test-alt-dists}
\end{align}
where $a\in[-0.1,0.1]$. Note that for $a>0$ the mean is unique $\mu = \pi/2$, for $a<0$ the mean is unique $\mu = -\pi/2$ and for $a=0$ the mean is non-unique $\mu = \pm \pi/2$,.

The numerical results for the power of the test are displayed in Figure \ref{fig:test-power-1d}. The test can clearly discern the tested alternative from the null in the tested family of distributions. The power of the test increases appreciably with $n$, as one would expect. These numerical results illustrate that test is suited for its purpose and is for low dimensions already reliably applicable for fairly low $n$.

\section{Application to Data}

\subsection{Wind Directions in Rome}

As a first data example, we consider a data set consisting of hourly wind directions from the years 2000 to 2019 in Rome, Italy, from \cite{meteoblue}. As in \cite{HEH19}, where similar data sets were considered for Basel, Switzerland, and G\"ottingen, Germany, we consider the intrinsic mean of wind directions for every day. Additional examples for applications to centroid clustering are given in Appendix~\ref*{subsec-supp:turtles}.

\begin{table}[!ht]
  \centering
  \caption{Ratio of days for each year on which the test for non-uniqueness of the intrinsic mean of wind directions did not reject the null hypothesis of a non-unique mean at level $\alpha = 0.05$.}
  \begin{tabular}{l*{10}{|c}}
    year          & 2000   & 2001   & 2002   & 2003   & 2004   & 2005   & 2006   & 2007   & 2008   & 2009\\
    \hline
    $ p \ge 0.05$ & 11.2\% & 12.1\% & 10.4\% & 12.9\% & 11.2\% & 12.1\% & 10.1\% & 13.4\% & 12.0\% & 10.7\%
  \end{tabular}\\
  
  \begin{tabular}{l*{10}{|c}}
    year          & 2010   & 2011   & 2012             & 2013   & 2014   & 2015   & 2016   & 2017   & 2018   & 2019\\
    \hline
    $ p \ge 0.05$ & 10.7\% & 11.2\% & \phantom{1}9.0\% & 14.0\% & 12.1\% & 11.8\% & 11.2\% & 10.7\% & 11.0\% & 12.3\%
  \end{tabular}
  \label{tab:wind-rome}
\end{table}

The relative ratios of days for each year on which non-uniqueness of the intrinsic mean of the wind directions could not be ruled out are displayed in Table~\ref{tab:wind-rome}. As this potential non-uniqueness affected more than $10\%$ of days in all but one year, the resulting distributions of daily means is qualitatively very distinct from the distribution of hourly wind directions. As a result, if one considers bootstrap intrinsic means of hourly wind directions in comparison to bootstrap means of daily mean wind directions, one get starkly different results, as shown in Figure~\ref{fig:wind}.

\begin{figure}[h!]
  \centering
  \subcaptionbox{bootstrap means 2001}[0.30\textwidth]{\includegraphics[width=0.3\textwidth]{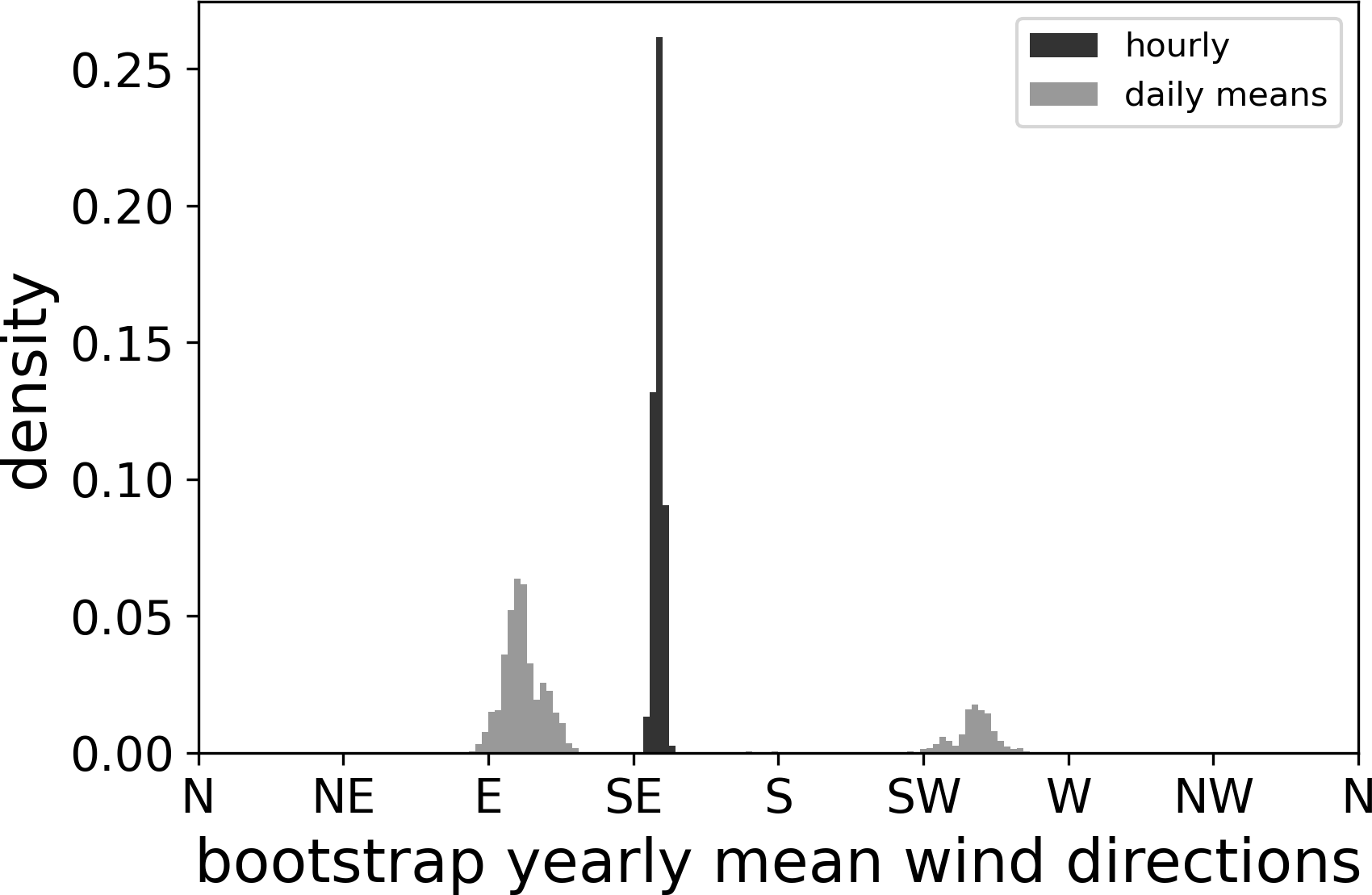}}
  \hspace*{0.02\textwidth}
  \subcaptionbox{bootstrap means 2019}[0.30\textwidth]{\includegraphics[width=0.3\textwidth]{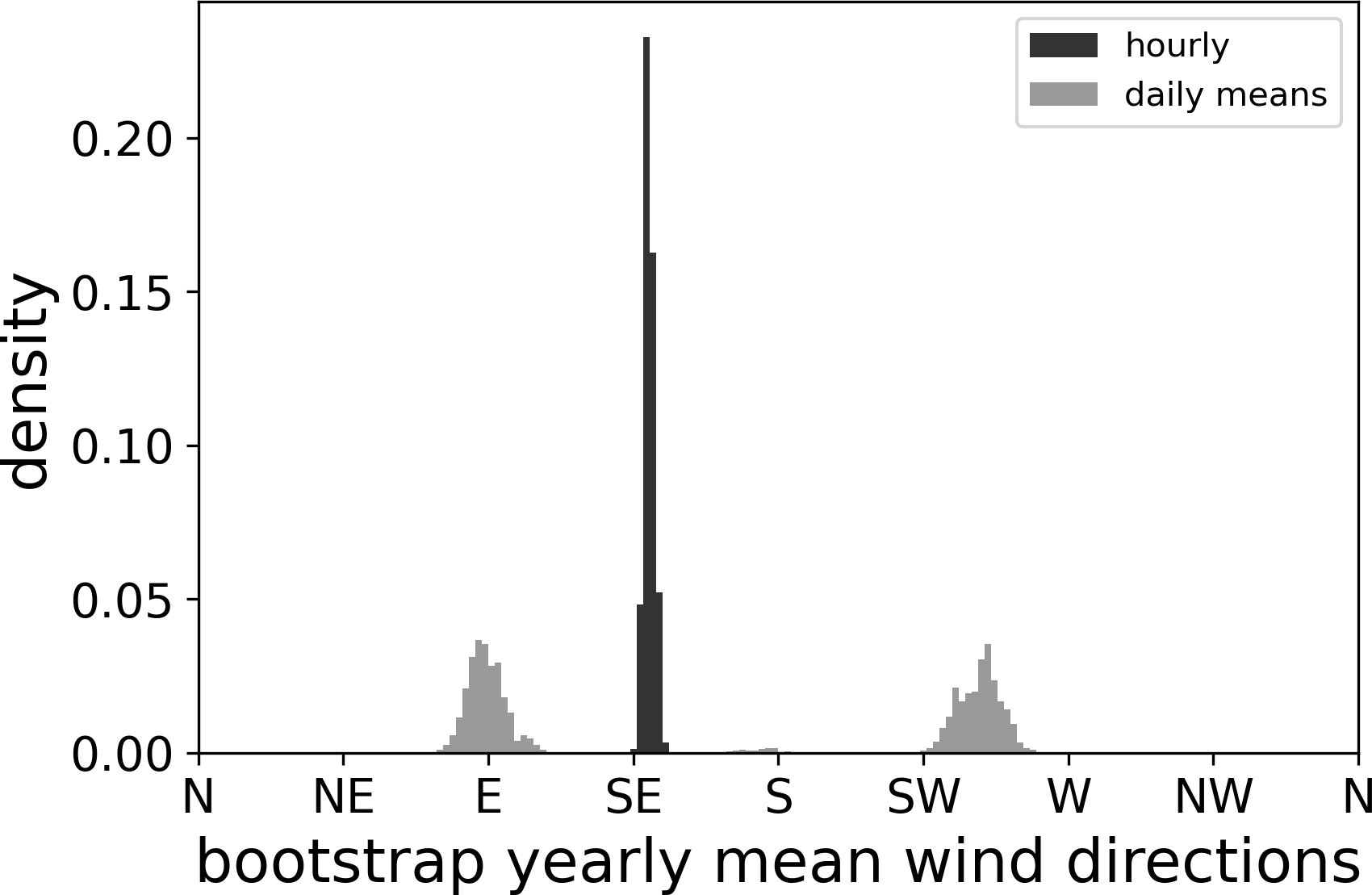}}
  \hspace*{0.02\textwidth}
  \subcaptionbox{bootstrap means 2011}[0.30\textwidth]{\includegraphics[width=0.3\textwidth]{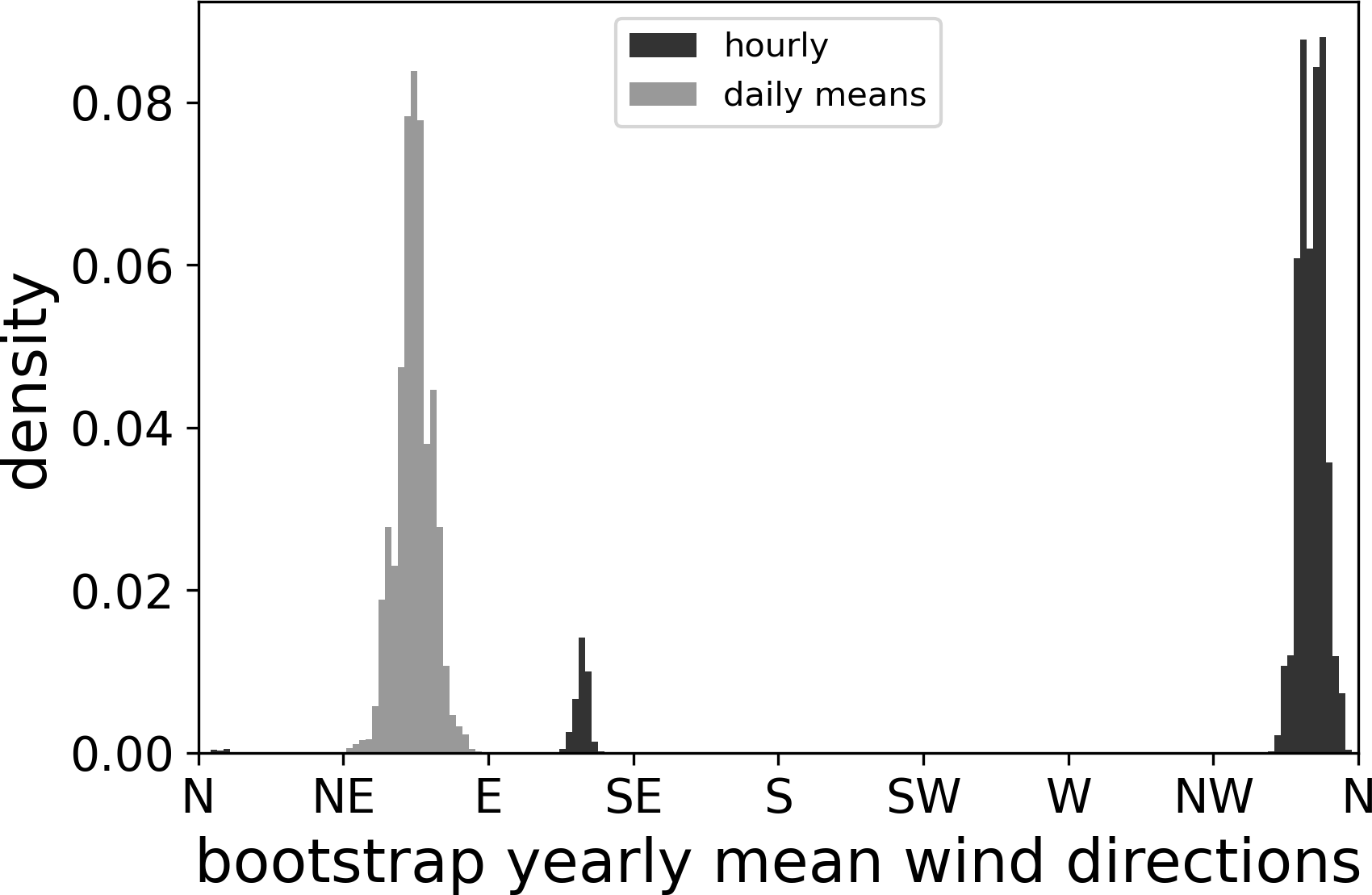}}
  \caption{Histograms of bootstrap means of hourly wind directions (dark) and daily mean wind directions (light). The years 2001 and 2019 were picked as they are typical years with a unimodal distribution for the hourly wind directions and a bimodal distribution for daily means. The year 2011 is the only case, in which the bootstrap means of the hourly wind directions exhibit more than one mode.\label{fig:wind}}
\end{figure}

\begin{Int}
  Similar data sets were investigated in \cite{HEH19} for Basel and G\"ottingen. In those cases, the daily means were generally unique and the distribution of daily mean wind directions was similar to the distribution of hourly wind directions. The data sets of daily means exhibited appreciable finite sample smeariness of the mean and nonuniqueness of the mean cannot be ruled out for most of the years. In the present data set for Rome, we see a different picture, where already on the scale of single days non-uniqueness of the mean direction is common. A closer examination of the data set, shown in Appendix~\ref*{subsec-supp:wind}, indicates that the wind in Rome has a pronounced daily pattern, as is typical for coastal regions, coming from the northeast (from land to sea) in the night and morning and from the southwest (from sea to land) in the afternoon and evening. This frequently leads to a bimodal distribution of wind directions over the day, such that nonuniqueness of the mean cannot be ruled out.
\end{Int}

\subsection{Platelets Spreading on a Substrate}

The second data set we are considering was first presented in \cite{PEK2019}. The data describe the total length $\ell$ of actin stress-fiber-like structures in blood platelets over time $t$ during spreading on a substrate. The expected behavior is an initial growth of fiber structures which is completed after a certain growth time. This leads to a regression problem with a more involved M-estimator. Consider two-dimensional data $x := (t,\ell)$, four parameters $\theta := (\theta_1, \theta_2, \theta_3, \theta_4) \in \mathbb{R} \times \mathbb{R}^+ \times \mathbb{R} \times \mathbb{R}$ and the loss function
\begin{align*}
  f(t, \theta) :=& \max \left\{0, (\theta_3 + \theta_4 t) \cdot \left(1 - \exp\left(\theta_1 - \frac{t}{\theta_2}\right)\right)\right\}\\
  \rho(x, \theta) :=& \left( \ell - f(t, \theta) \right)^2
\end{align*}
which amounts to a standard quadratic loss regression model with a somewhat unusual four-parametric regression function $f$. However, since this loss function is treated in an M-estimator setting, the theory and test laid out in this paper can be applied in this setting. The parameter $\theta_2$ describes the time scale of fiber growth and is therefore the parameter of greatest interest.

Since imaging on such small scales and such high time resolution is very challenging, images frequently feature low brightness and some blurring, which in turn poses a challenge for image analysis. The semi-automated line detection using the Filament Sensor \cite{EWGHR15} comes with some amount of artifacts which lead to a high variance of $\ell$. As a results, for some of the platelets several local minima of the sample M-variance exist and our test with $B=10\,000$ shows that these can sometimes not be rejected as valid minima of population M-variance.

The distance between two sets of parameters is defined via $L^2$ distance of the curves within the experimental time $[0,T]$, using a simple sum approximation to reduce calculation time.
\begin{align*}
  d(\theta, \widetilde{\theta}) := T^2 \int_0^T \limits | f(t, \theta) - f(t, \widetilde{\theta})|^2 dt \approx \sum_{i=1}^n | f(t_i, \theta) - f(t_i, \widetilde{\theta})|^2
\end{align*}

\begin{figure}[h!]
  \centering
  \subcaptionbox{Test does not reject with $p_d = 0.2270$}[0.45\textwidth]{\includegraphics[width=0.45\textwidth]{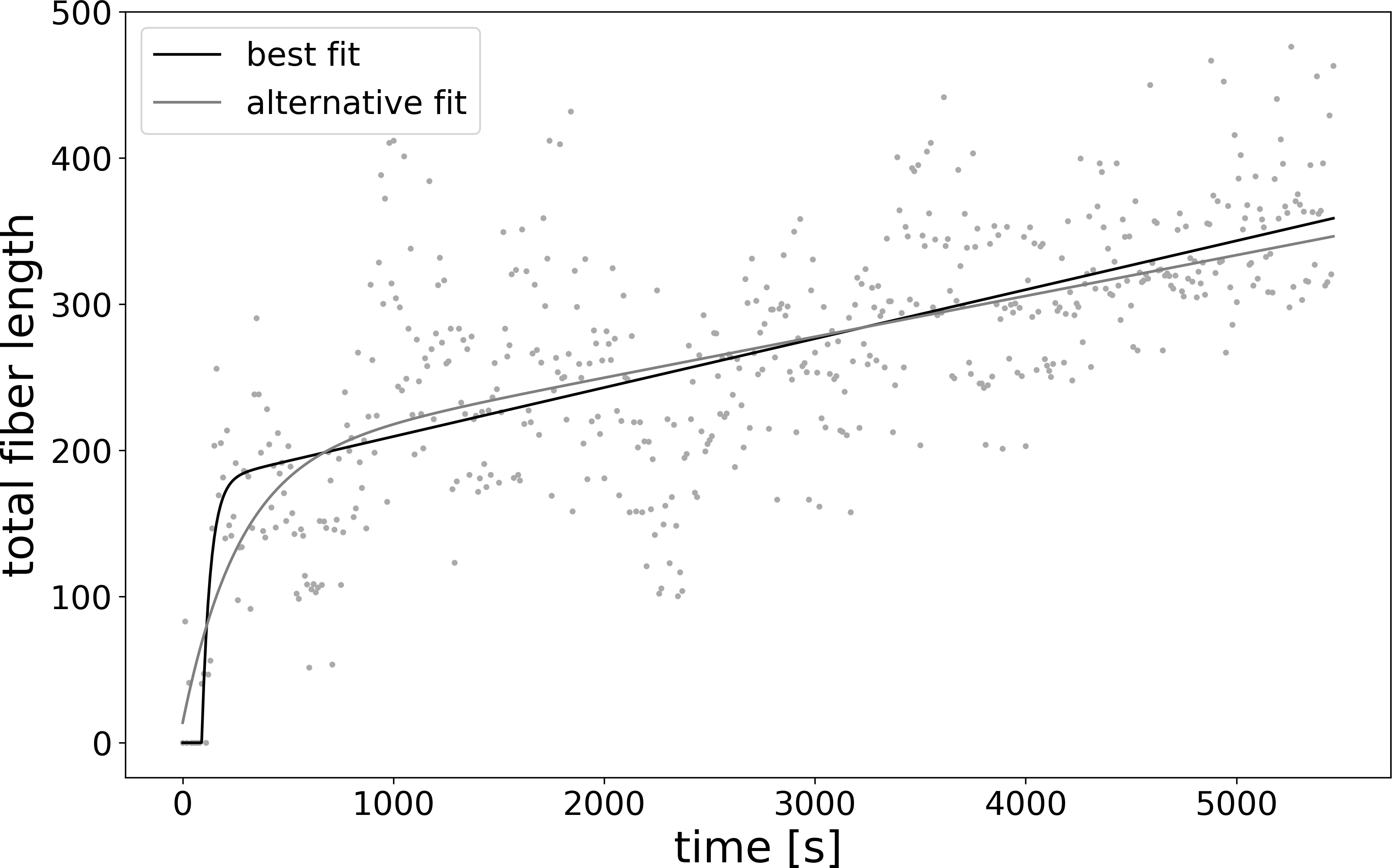}}
  \hspace*{0.02\textwidth}
  \subcaptionbox{Test rejects with $p_d = 0.0438$}[0.45\textwidth]{\includegraphics[width=0.45\textwidth]{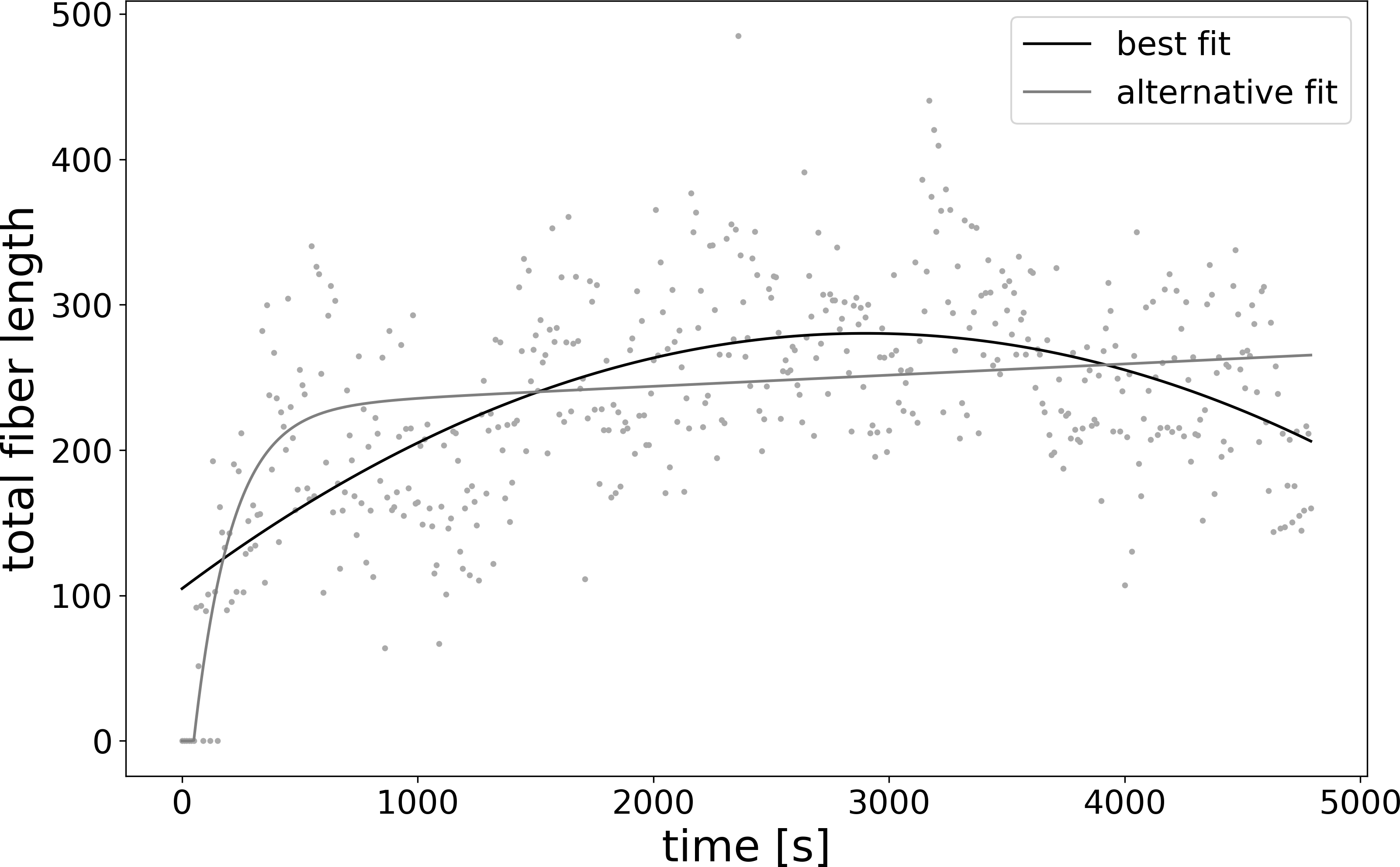}}
  \caption{Plots for two of the time series. Panel~(a) displays a typical example, where the test does not reject the possibility for a second valid optimal fit. The fits are very similar for long times but differ more strongly at short times. Panel~(b) displays an extreme case, in which the best fit leads to extreme parameter values and an unusual curve shape. The test rejects in this case because the secondary cluster is small, however the alternative fit looks much closer to an expected result. In this case, it can be beneficial to restrict the parameter space. In \cite{PEK2019} the restriction $\theta_1 \theta_2 \ge -30$ was used, since the begin of spreading, marked by the initial departure of the curve from $0$, must be within the movie or very briefly before due to the experimental setup.\label{fig:platelets}}
\end{figure}

In Figure \ref{fig:platelets} we show two examples. Panel~(a) displays a typical example where the fact that the test does not reject indicates a non-unique optimum.

\begin{Int}
  In the example in Figure~\ref{fig:platelets}, panel~(b) the best fit violates assumptions of the experiment, namely the fact that the movies are started once the platelet begins spreading on the substrate and thus the begin of spreading can at most lie a few seconds before the start of the time series. Therefore, this fit suggests that restricting parameter space to $\theta_1 \theta_2 \ge -30$ as done by \cite{PEK2019} is necessary here.
  
  The results of \cite{PEK2019} concerning the time scale of actin fiber formation remains mostly unaffected by the result presented here, since only five platelet in the full data set exhibit non-uniqueness in their fits and only for two this remains true if the parameter space is restricted to $\theta_1 \theta_2 \ge -30$. In these two cases, the two candidates for the optimum have very similar time constants $\theta_2$.
\end{Int}

\subsection{Clustering of RNA Residues}

For the third example, we consider a large RNA data set carefully selected for high experimental X-ray precision ($0.3$ nanometers) by \cite{Duarte1998}, updated by \cite{Wadley2007} and analyzed by them and others, for example, \cite{Murray2003,Richardson2008}. Additional examples for applications to centroid clustering are given in Appendix~\ref*{subsec-supp:gauss-mixture}.

Specifically, we revisit the clustering approach presented in \cite{WEMH2023} and pick out four clusters of moderate sizes, namely $182$, $64$, $53$, and $49$ points, respectively, adding up to $n = 348$ points. The data space is $(S^1)^{\times 7}$, i.e. a seven dimensional flat torus defined by the 7 dihedral angles describing a suite. We consider truncated multivariate isotropic normal distributions for a mixture model clustering, both with individual variances per cluster $(\sigma_j)_j$ and with same variance $\sigma_0$ for all clusters. The latter case corresponds to a ``fuzzy'' k-means clustering. The $p$-values for different numbers of clusters and the two approaches are listed in Table~\ref{tab:rna-clusters}.

\begin{table}[!ht]
  \centering
  \caption{Results for the $p$-values for the clustering results with different numbers of clusters and either the same variance $\sigma_0$ or individual cluster variances $(\sigma_j)_j$. The additional flexibility afforded by individual cluster variance leads to less reliability of the cluster results.}
  \begin{tabular}{l*{5}{|c}}
    clusters                        & $k=2$ & $k=3$  & $k=4$ & $k=5$  & $k=6$\\
    \hline
    $p$-value, same variance        & 0.0   & 0.0142 & 0.0   & 0.2558 & 0.255\\
    $p$-value, individual variances & 0.0   & 0.3244 & 0.0   & 1.0    & 1.0 
  \end{tabular}
  \label{tab:rna-clusters}
\end{table}

\begin{figure}[h!]
  \centering
  \subcaptionbox{cluster labels}[0.45\textwidth]{\includegraphics[width=0.45\textwidth]{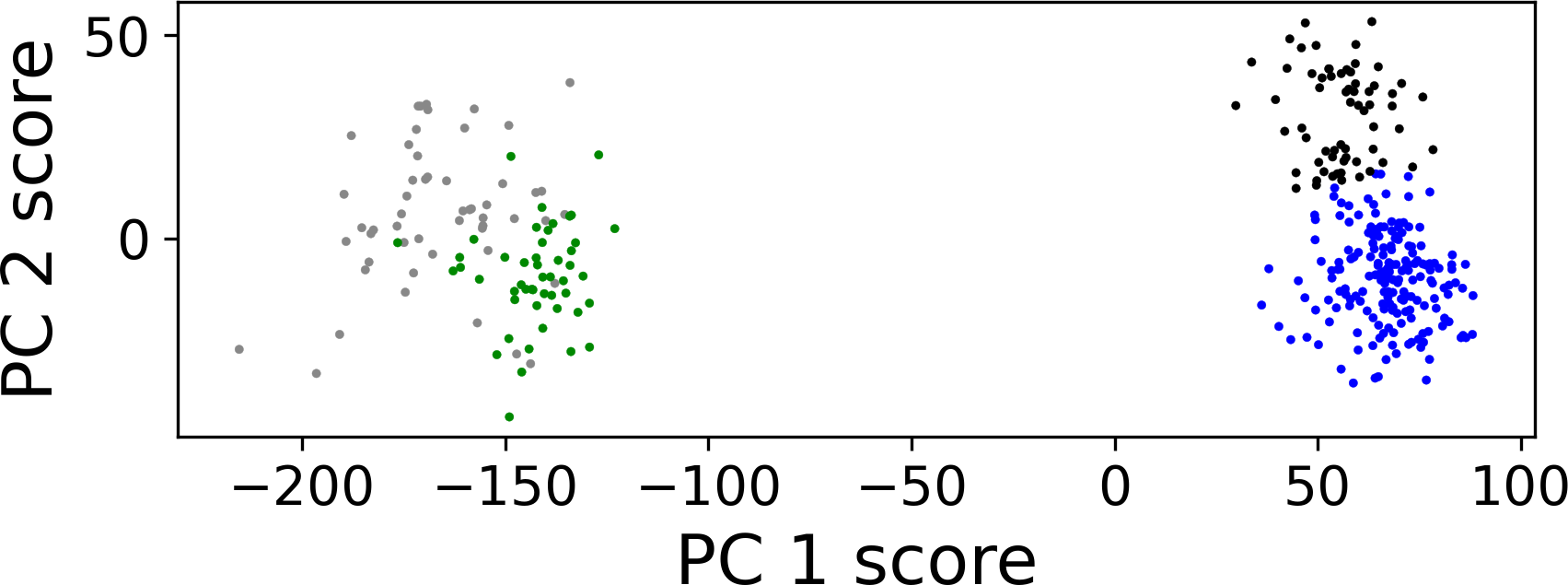}}\\
  \subcaptionbox{$k=4$, same variance}[0.45\textwidth]{\includegraphics[width=0.45\textwidth]{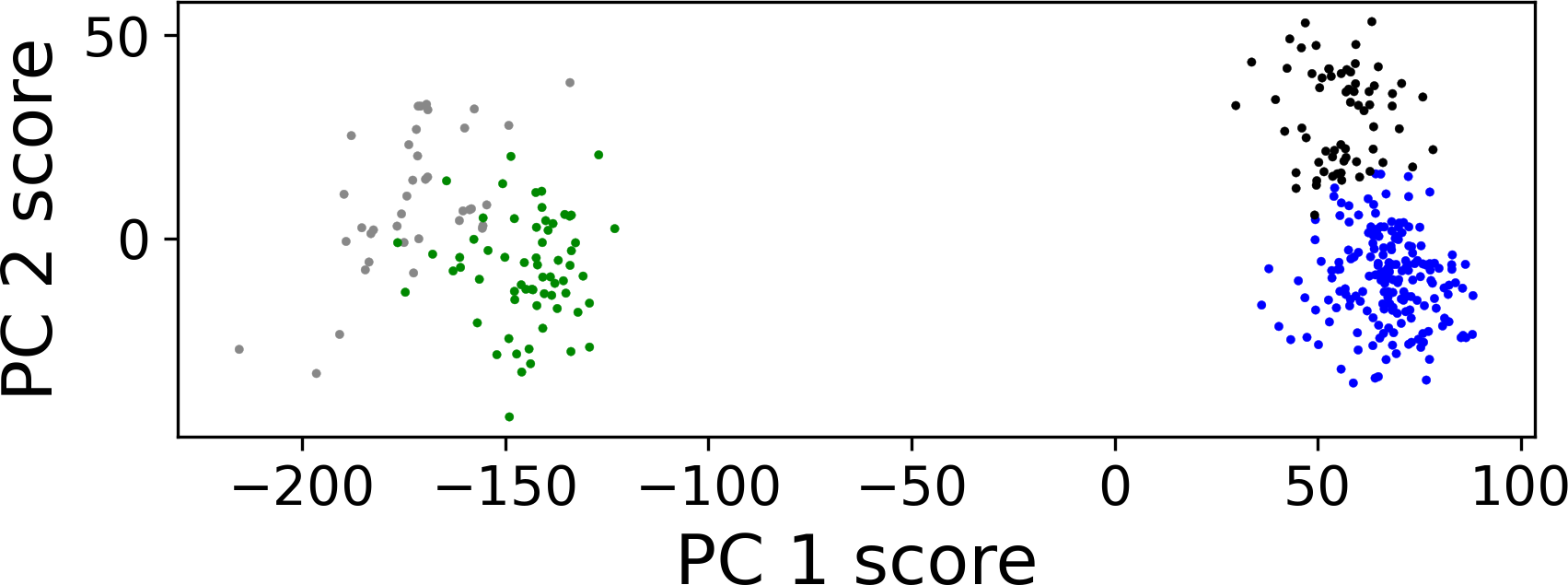}}
  \hspace*{0.02\textwidth}
  \subcaptionbox{$k=4$, individual variances}[0.45\textwidth]{\includegraphics[width=0.45\textwidth]{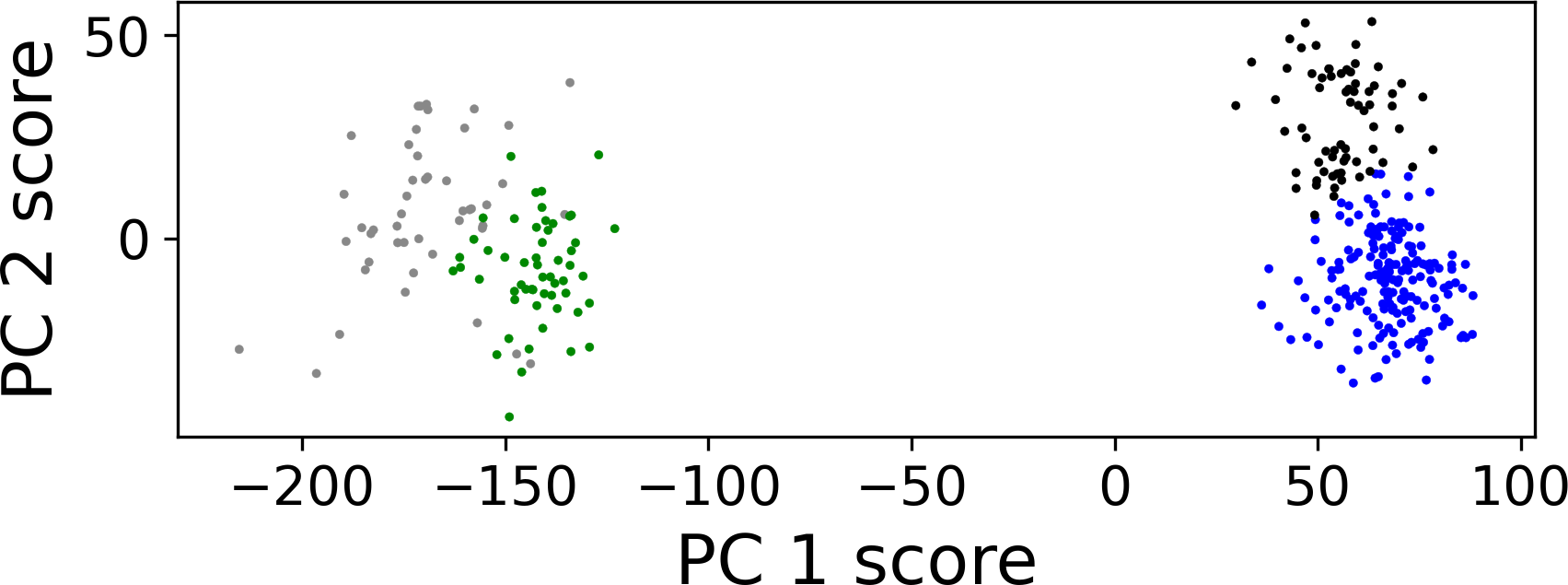}}
  \caption{Scatter plots of the first two tangent principal components of the RNA data. The colors in panel~(a) highlight the three species, while the colors in panels~(b) and (c) show the four clusters in the four-cluster segmentation. The clustering results are in very good agreement with the true labels and the test results indicate that the clustering results are reliable. 
    \label{fig:rna}}
\end{figure}

One can see that for $k=2$ and $k=4$ mixture components the clustering result is unique for either variant of the clustering algorithm. This is due to the fact that the four clusters are arranged in two groups of two clusters, as illustrated in Figure~\ref{fig:rna}. If one considers $3$, $5$, or $6$ mixture components, the null hypothesis of non-uniqueness cannot be rejected, except for same $\sigma_0$ and $k=3$. 

\begin{Int}
  Taken together, these results show that the test for non-uniqueness accurately predicts that the data set consists of four clusters. Remarkably, $3$ clusters yield a less stable fit than $2$ clusters and individual cluster variances generally lead to less stable fits than requiring the same variance for all clusters. This shows that adding more parameters does not necessarily improve the quality of the fit in terms of uniqueness of the results. Unlike in model selection criteria like BIC and AIC, additional parameters are not directly penalized in the test for non-uniqueness. Instead, models which do not achieve unique results with confidence can be considered inherently unreliable for the given application on their own merit alone.
\end{Int}

\section*{Acknowledgments}

The author gratefully acknowledges funding by DFG~CRC~755, project~B8, DFG~CRC~803, project~Z2, and DFG~CRC~1456, Project~B2. I am very grateful to Stephan Huckemann for many helpful discussions and detailed comments to the manuscript, to Sarah K\"oster for permission to use the platelet data and to Yvo Pokern for helpful discussions.

\pagebreak
\appendix

\section{Additional Results for Asymptotics on Metric Spaces}\label{section-supp:aux-metric}

To understand the importance of the local Lipschitz condition \ref*{as:Lipschitz0}, it is necessary to introduce some empirical process theory. The most important notion for the following is the notion of the bracketing entropy of a function class.

\begin{Def} $ $\label{def:bracket-number}
  For a class of functions $\mathcal{F}$ from $Q$ to $\mathbb{R}$, a norm $\| \, \|$ on $\mathcal{F}$ and $\eps > 0$, consider two functions $f_1, f_2 \in \mathcal{F}$ such that $\| f_1 - f_2 \| < \eps$ and call
  \begin{align*}
    [f_1,f_2](\eps, \mathcal{F}, \| \, \|) := \left\{ f \in \mathcal{F} \, : \, \forall_0 q \in Q \quad f_1(q) \le f(q) \le f_2(q) \right\}
  \end{align*}
  an $\eps$ bracket. Then the \emph{bracketing number} $N_{[\, ]}(\eps, \mathcal{F}, \| \, \|)$ is defined as the minimal number of $\eps$ brackets needed such that their union is $\mathcal{F}$. If $\mathcal{F}$ has an envelope function $F$, the \emph{bracketing entropy} is defined as
  \begin{align*}
    J_{[\, ]} (\eta, \mathcal{F}, \| \, \|) := \int_0^\eta \limits \sqrt{1 + \log N_{[\, ]}(\eps\| F \|, \mathcal{F}, \| \, \|) } \, d\eps
  \end{align*}
\end{Def}

The bracketing entropy for a function class is difficult to determine. However, there is a similar concept for general metric spaces, namely the covering entropy, which is usually much simpler to determine.

\begin{Def} $ $\label{def:cover-number}
  The \emph{covering entropy} is defined as
  \begin{align*}
    J (\eta, \mathcal{P}, d) := \int_0^\eta \limits \sqrt{1 + \log N(\eps, \mathcal{P}, d) } \, d\eps
  \end{align*}
\end{Def}

The local Lipschitz condition \ref*{as:Lipschitz0} connects the bracketing number of a function class indexed by a totally bounded metric space to the covering number of this metric space. Consider the function classes $\mathcal{F}^i := \Big\{ \rho^i(p, \cdot) - \rho^i(\mu^i, \cdot), p \in \mathcal{B}_\delta(\mu^i) \Big\}$, then the connection is as follows.

\begin{Theorem}[\cite{vdVW96}, Theorem 2.7.11] $ $\label{theo:numbers}\\
  Under Assumption \ref*{as:Lipschitz0}, we have for any norm $\|\, \|$ on $\mathcal{F}^i$
  \begin{align*}
    N_{[\, ]}(2\eps\| \dot{\rho}^i \|, \mathcal{F}^i, \|\, \|) \le N(\eps, \mathcal{B}_\delta(\mu^i), d_i) \, .
  \end{align*}
\end{Theorem}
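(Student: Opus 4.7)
The plan is to construct an explicit bracketing of the function class $\mathcal{F}_i$ from a covering of $B_\delta(\mu^i)$, using the local Lipschitz bound to control bracket widths.

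First, fix $\eps > 0$ and let $N := N(\eps, B_\delta(\mu^i), d_i)$. By definition of the covering number, there exist centers $p_1, \dots, p_N \in B_\delta(\mu^i)$ such that the $d_i$-balls of radius $\eps$ about these centers cover $B_\delta(\mu^i)$. For each $k = 1, \dots, N$, I would define the two functions
\begin{align*}
  f_k^-(q) &:= \rho(p_k, q) - \rho(\mu^i, q) - \eps \, \dot{\rho}^i(q), \\
  f_k^+(q) &:= \rho(p_k, q) - \rho(\mu^i, q) + \eps \, \dot{\rho}^i(q).
\end{align*}

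Next I would verify that $\{[f_k^-, f_k^+]\}_{k=1}^N$ actually covers $\mathcal{F}_i$. Let $f \in \mathcal{F}_i$ be arbitrary, so $f(\cdot) = \rho(p, \cdot) - \rho(\mu^i, \cdot)$ for some $p \in B_\delta(\mu^i)$. Choose $k$ with $d_i(p, p_k) \le \eps$. Assumption \ref{as:Lipschitz0} applied to $p$ and $p_k$ (both in $B_\delta(\mu^i)$) gives, almost surely,
\begin{align*}
  -\eps \, \dot{\rho}^i(q) \;\le\; \rho(p, q) - \rho(p_k, q) \;\le\; \eps \, \dot{\rho}^i(q),
\end{align*}
which, upon subtracting $\rho(\mu^i, q)$ throughout, yields $f_k^-(q) \le f(q) \le f_k^+(q)$ for almost every $q$. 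Thus $f \in [f_k^-, f_k^+]$, so the brackets exhaust $\mathcal{F}_i$.

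Finally I would check the bracket width in any chosen norm $\|\cdot\|$ on $\mathcal{F}_i$: since $f_k^+ - f_k^- = 2\eps \, \dot{\rho}^i$ pointwise, we get $\|f_k^+ - f_k^-\| = 2\eps \, \|\dot{\rho}^i\|$, so each pair is a $(2\eps\|\dot{\rho}^i\|)$-bracket. Since we used $N(\eps, B_\delta(\mu^i), d_i)$ such brackets, the stated inequality follows. The argument is essentially a bookkeeping exercise; the only subtlety is that the Lipschitz inequality in Assumption \ref{as:Lipschitz0} holds almost surely with an exceptional null set that may depend on $(p_1, p_2)$, so I would note that one needs the exceptional sets to combine into a single null set for each bracket, which is automatic once $p$ and $p_k$ are fixed.
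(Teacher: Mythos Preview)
Your argument is correct and is precisely the standard proof of this fact (Theorem~2.7.11 in \cite{vdVW96}). The paper does not give its own proof of this statement; it simply quotes the result from the literature. So there is nothing to compare against: you have supplied the proof the paper omits, and it is the expected one.

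One small remark: as written, the paper's Definition~\ref{def:bracket-number} requires the bracket endpoints $f_1,f_2$ to lie in $\mathcal{F}$, whereas your $f_k^\pm$ generally do not lie in $\mathcal{F}_i$. This is a harmless imprecision in the paper's definition rather than a flaw in your argument; the standard definition (and the one needed for Theorem~\ref{theo:emp-proc-bound-raw}) does not require the bracket endpoints to belong to the class, only that they have finite norm.
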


For the next theorem we introduce the empirical process and two relevant norms.
\begin{Def}
  Let $Q$ be the data space and $P = \mathbb{P} \circ X^{-1}$ the distribution measure in $Q$ corresponding to the random variable $X$. 
  \begin{itemize}
    \item[a)] For an i.i.d. sample $\{X_1, \dots, X_n\}$ from $P$ let $\mathbb{P}_n := \frac{1}{n} \sum_{j=1}^n \limits \delta_{X_j}$ be the \emph{empirical measure}.
    \item[b)] For any measurable function $f : Q \to \mathbb{R}$ let $\mathbb{G}_n (f) := \sqrt{n} \big( \mathbb{P}_n f - Pf \big)$ be the \emph{empirical process}.
    \item[c)] For any measurable function $f : Q \to \mathbb{R}$ let $\|f\|_{P,r} := \left( \int |f|^r dP \right)^\frac{1}{r}$ be the $L_r(P)$ norm.\\
    For any class $\mathcal{F}$ of measurable functions let $\| \mathbb{G}_n \|_\mathcal{F} := \sup_{f \in \mathcal{F}} \limits |\mathbb{G}_n(f)|$.
  \end{itemize}
\end{Def}

\begin{Theorem}[\cite{vdVW96}, Theorem 2.14.2] $ $\label{theo:emp-proc-bound-raw}\\
  For any class $\mathcal{F}$ of measurable functions with measurable envelope function $F$,
  \begin{align*}
    \mathbb{E} \left[ \| \mathbb{G}_n \|_\mathcal{F} \right] \lesssim J_{[\, ]} (1, \mathcal{F}, L_2(P)) \|F\|_{P,2}
  \end{align*}
\end{Theorem}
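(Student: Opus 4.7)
The plan is a chaining argument indexed by dyadic scales, which is the standard route for such bracketing entropy bounds. Fix $\eps_k = 2^{-k}$ for $k \geq 0$ and set $N_k := N_{[\,]}(\eps_k \|F\|_{P,2}, \mathcal{F}, L_2(P))$. For each $k$ I would choose a minimal collection of brackets of $L_2(P)$-size at most $\eps_k \|F\|_{P,2}$ whose union covers $\mathcal{F}$; for $f \in \mathcal{F}$ let $\pi_k f$ denote the lower endpoint of the bracket at scale $k$ containing $f$. Then $\|f - \pi_k f\|_{P,2} \leq \eps_k \|F\|_{P,2}$ and $|\pi_k f - f| \leq 2F$ pointwise. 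Using the telescoping identity $\pi_K f - \pi_0 f = \sum_{k=1}^{K}(\pi_k f - \pi_{k-1} f)$ together with the linearity of $\mathbb{G}_n$, I decompose $\sup_{f \in \mathcal{F}} |\mathbb{G}_n(f)|$ into a sum of scale-wise suprema over finite increment classes, plus a residual $\mathbb{G}_n(f - \pi_K f)$.

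At each scale $k$ the increment $\pi_k f - \pi_{k-1} f$ ranges over a finite class of cardinality at most $N_k N_{k-1} \leq N_k^2$ and has $L_2(P)$-norm bounded by $3 \eps_{k-1} \|F\|_{P,2}$. Applying Bernstein's inequality, combined with a union bound over this finite class, yields a bound of order $\eps_{k-1} \|F\|_{P,2} \sqrt{1 + \log N_k}$ for the centered truncated increments, provided each increment is first truncated at a level $a_k$ chosen to control its sup-norm; the untruncated tail is handled by elementary $L_1$ estimates. Summing these contributions over $k$ produces
\begin{align*}
\mathbb{E}\|\mathbb{G}_n\|_{\mathcal{F}} \lesssim \|F\|_{P,2} \sum_{k \geq 1} \eps_{k-1} \sqrt{1 + \log N_k} \asymp \|F\|_{P,2} \int_0^1 \sqrt{1 + \log N_{[\,]}(\eps \|F\|_{P,2}, \mathcal{F}, L_2(P))}\, d\eps,
\end{align*}
and the right-hand side is exactly $J_{[\,]}(1, \mathcal{F}, L_2(P)) \|F\|_{P,2}$. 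The residual $\mathbb{G}_n(f - \pi_K f)$ vanishes in expectation as $K \to \infty$, because $\|f - \pi_K f\|_{P,2} \to 0$ uniformly in $f$.

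The main obstacle I expect is the truncation step that feeds Bernstein's inequality. An increment $\pi_k f - \pi_{k-1} f$ with small $L_2(P)$-norm may still have sup-norm as large as $2F$, so the subgaussian Bernstein term of size $a_k \log N_k / \sqrt{n}$ is not automatically absorbed into the chaining sum. The remedy is to pick the truncation level $a_k$ so that (i) the Bernstein bound at each scale matches the chaining increment $\eps_{k-1} \|F\|_{P,2} \sqrt{\log N_k}$, and (ii) the $L_1$ mass discarded by the truncation telescopes into a quantity controlled by the entropy integral. This careful bookkeeping, rather than any single deep idea, is where the proof in \cite{vdVW96} becomes technical; once the thresholds are fixed, the remaining work reduces to recognizing the dyadic Riemann sum as $J_{[\,]}(1, \mathcal{F}, L_2(P))$.
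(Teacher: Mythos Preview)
The paper does not supply its own proof of this statement: it is quoted verbatim as Theorem~2.14.2 of \cite{vdVW96} and used as a black box in the proofs of Proposition~\ref{prop:CLT-metric} and Theorem~\ref{theo:emp-proc-bound}. So there is no argument in the paper to compare your proposal against.

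That said, your sketch is the standard chaining-with-truncation proof that \cite{vdVW96} themselves use, and the structure you describe (dyadic bracketing scales, finite increment classes bounded via Bernstein after truncation, summing to the entropy integral) is correct. Your identification of the delicate point---choosing the truncation levels $a_k$ so that both the Bernstein sub-exponential term and the discarded $L_1$ tails are absorbed into the entropy sum---is exactly where the work lies in the original proof. One small correction: the residual $\mathbb{G}_n(f-\pi_K f)$ does not vanish merely because $\|f-\pi_K f\|_{P,2}\to 0$; one controls it instead by bounding $|\mathbb{G}_n(f-\pi_K f)|$ via the bracket width at scale $K$ and the empirical process of the envelope, which again feeds into the entropy integral rather than disappearing for free.
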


Now, we have all the tools to show an auxiliary proposition for Theorem \ref*{theo:CLT-m-variance-metric-spaces}

\begin{Prop}[Preparation for the CLT] \label{prop:CLT-metric}
  Under Assumptions \ref*{as:BPSC}, \ref*{as:finite}, \ref*{as:Lipschitz0} and \ref*{as:entropy} we have, for a measurable selection $\mun^i \in \left\{ x \in \mathcal{B}_\delta(\mu^i) : F_n(x) = \inf_{y \in \mathcal{B}_\delta(\mu^i)} \limits F_n(y) \right\}$
  \begin{align*}
    \sqrt{n} \big(F_n(\mun^i) - F(\mu^i)\big) &= \sqrt{n} \big(F_n(\mu^i) - F(\mu^i)\big) + o_P(1) \, .
  \end{align*}
\end{Prop}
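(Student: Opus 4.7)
The plan is to reduce the claim to showing that $\sqrt{n}\bigl(F_n(\widehat{\mu}_n^i) - F_n(\mu^i)\bigr) = o_P(1)$, since subtracting $F(\mu^i)$ from both sides then yields the stated identity. I prove this via a two-sided bound. One direction is immediate: because $\mu^i \in B_\delta(\mu^i)$, the minimizing definition of $\widehat{\mu}_n^i$ gives $F_n(\widehat{\mu}_n^i) - F_n(\mu^i) \le 0$. For the other direction, decompose
\[
F_n(\widehat{\mu}_n^i) - F_n(\mu^i) = \tfrac{1}{\sqrt{n}}\mathbb{G}_n\bigl(\rho(\widehat{\mu}_n^i,\cdot) - \rho(\mu^i,\cdot)\bigr) + \bigl(F(\widehat{\mu}_n^i) - F(\mu^i)\bigr),
\]
where the last bracket is non-negative since $\mu^i \in E$ is a global minimizer of $F$. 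Combining the two inequalities gives
\[
\sqrt{n}\,\bigl|F_n(\widehat{\mu}_n^i) - F_n(\mu^i)\bigr| \le \bigl|\mathbb{G}_n\bigl(\rho(\widehat{\mu}_n^i,\cdot) - \rho(\mu^i,\cdot)\bigr)\bigr|,
\]
so it suffices to show the right-hand side is $o_P(1)$.

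To bound this empirical-process term I localize. For $\eta \in (0,\delta]$ consider the class $\mathcal{F}^\eta_i := \{\rho(p,\cdot) - \rho(\mu^i,\cdot) : p \in B_\eta(\mu^i)\}$, which by Assumption \ref{as:Lipschitz0} admits the envelope $F_\eta = 2\eta\,\dot{\rho}^i$ (noting $\|F_\eta\|_{P,2} < \infty$ by the same assumption). Feeding the bracketing-covering bound of Theorem \ref{theo:numbers} into Theorem \ref{theo:emp-proc-bound-raw}, with the change of variables $\eps \mapsto \eps\eta$ in the bracketing-entropy integral, yields, uniformly in $n$,
\[
\mathbb{E}\,\|\mathbb{G}_n\|_{\mathcal{F}^\eta_i} \;\lesssim\; \eta\,\|\dot{\rho}^i\|_{P,2}\int_0^1 \sqrt{1 + \log N\bigl(\eps\eta,\,B_\eta(\mu^i),\,d_i\bigr)}\,d\eps .
\]
By Assumption \ref{as:entropy} the right-hand side tends to zero as $\eta \to 0$. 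Given any $\eps>0$, choose $\eta$ so small that this expression is below $\eps$; Markov's inequality then yields $\|\mathbb{G}_n\|_{\mathcal{F}^\eta_i}$ arbitrarily small with probability arbitrarily close to one, uniformly in $n$, while BPSC (Assumption \ref{as:BPSC}) ensures $\widehat{\mu}_n^i \in B_\eta(\mu^i)$ with probability tending to one. On the intersection of these two events one has $\bigl|\mathbb{G}_n\bigl(\rho(\widehat{\mu}_n^i,\cdot) - \rho(\mu^i,\cdot)\bigr)\bigr| \le \|\mathbb{G}_n\|_{\mathcal{F}^\eta_i}$, establishing the required $o_P(1)$ statement.

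The main subtlety I anticipate is the interaction between the ambient metric $d$, in which BPSC delivers the consistency $\widehat{\mu}_n^i \to \mu^i$, and the auxiliary metric $d_i$ that governs both the Lipschitz constant and the covering number appearing in Assumption \ref{as:entropy}. Localizing $\widehat{\mu}_n^i$ into a ball whose $d_i$-entropy is actually controlled requires $d$ and $d_i$ to be compatible near $\mu^i$; in the typical cases $d_i = d$ or $d_i$ continuous with respect to $d$ this is automatic, but in the fully general setting one may have to replace the fixed $\eta$ above by a carefully shrinking sequence $\eta_n \to 0$ together with a standard peeling argument. This is a routine but bookkeeping-heavy adaptation and does not require new ideas beyond those used above.
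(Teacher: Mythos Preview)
Your proof is correct and follows essentially the same route as the paper: reduce to $\sqrt{n}\bigl(F_n(\widehat\mu_n^i)-F_n(\mu^i)\bigr)=o_P(1)$, sandwich this using the minimizing property of $\widehat\mu_n^i$ and of $\mu^i$ to bound by $|\mathbb{G}_n(\rho(\widehat\mu_n^i,\cdot)-\rho(\mu^i,\cdot))|$, then control the localized empirical process via Theorems~\ref{theo:numbers} and~\ref{theo:emp-proc-bound-raw} together with Assumption~\ref{as:entropy} and BPSC. Your handling of the shrinking neighborhood $\eta\to 0$ and your explicit flagging of the $d$ versus $d_i$ compatibility issue are in fact slightly more careful than the paper's own write-up, which takes $\delta\to 0$ at the end without dwelling on the interaction of the two metrics.
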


\begin{proof}
  We follow the strategy laid out in \cite{DM18}~Proposition~3.
  \begin{align*}
    \MoveEqLeft[4] \mathbb{P} \left[ \sqrt{n} |F_n(\mun^i) - F_n(\mu^i)| > \eps \right] = \mathbb{P} \left[ \sqrt{n} \left( F_n(\mu^i) - F_n(\mun^i) \right) > \eps \right]\\
    \le& \mathbb{P} \left[ \sqrt{n} \left( F_n(\mu^i) - F_n(\mun^i) - \left( F(\mu^i) - F(\mun^i) \right) \right) > \eps \right]\\
    =& \mathbb{P} \left[ \sqrt{n} \left| F_n(\mun^i) - F(\mun^i) - \left( F_n(\mu^i) - F(\mu^i) \right) \right| > \eps \right]\\
    \le& \mathbb{P} \left[ \sup_{x^i \in \mathcal{B}_\delta(\mu^i)} \limits \sqrt{n} \left| F_n(x^i) - F(x^i) - \left( F_n(\mu^i) - F(\mu^i) \right) \right| > \eps \right] + \mathbb{P} [d(\mun^i, \mu^i) > \delta]
  \end{align*}
  Now, we can apply the Markov inequality, and Assumption \ref*{as:BPSC} to get
  \begin{align*}
    \mathbb{P} \left[ \sqrt{n} |F_n(\mun^i) - F_n(\mu^i)| > \eps \right] \le& \frac{1}{ \eps} \mathbb{E} \left[ \sup_{x^i \in \mathcal{B}_\delta(\mu^i)} \limits \left| \mathbb{G}_n \left( \rho^i(x^i, \cdot) - \rho^i(\mu^i, \cdot) \right) \right| \right] + o_P(1)\\
    \le& \frac{1}{ \eps} \mathbb{E} \left[ \| \mathbb{G}_n \|_{\mathcal{F}^i} \right] + o_P(1) \, .
  \end{align*}
  And finally, using Theorem \ref{theo:emp-proc-bound-raw} and Assumption \ref*{as:entropy}, we get
  \begin{align*}
    \mathbb{P} \left[ \sqrt{n} |F_n(\mun^i) - F_n(\mu^i)| > \eps \right] \le& \frac{1}{ \eps} \mathbb{E} \left[ \| \mathbb{G}_n \|_{\mathcal{F}^i} \right] + o_P(1)\\ \lesssim& \frac{2 \delta}{\eps} \|\dot{\rho}^i\|_{P,2} J_{[\, ]} (1, \mathcal{F}^i, L_2(P)) + o_P(1)\\
    =& \frac{2 \delta}{\eps} \|\dot{\rho}^i\|_{P,2} \int_0^1 \limits \sqrt{1 + \log N_{[\, ]}(2 \eps \delta \| \dot{\rho}^i \|, \mathcal{F}^i, L_2(P)) } \, d\eps  + o_P(1)\\
    \le& \frac{2 \delta}{\eps} \|\dot{\rho}^i\|_{P,2} \int_0^1 \limits \sqrt{1 + \log N(\eps \delta, \mathcal{B}_\delta(\mu^i), d_i ) } \, d\eps + o_P(1) = o_P(1) \, .
  \end{align*}
  This proves the result taking the limit $\delta \to 0$.
\end{proof}

\section{Auxiliary Results for Manifolds}\label{section-supp:aux-mf}

\subsection{An Entropy Result} \label{subsec-supp:entropy}

As a first step in the auxiliary results for the CLT we show a bound for the empirical process. In Lemma \ref{lem:entropy} we derive an entropy bound which is only coarsely sketched in \cite{vdV00} for the special case $\beta = 1$. We set out to make the argument more explicit and generalize it to general $\beta$. With this auxiliary result we can prove a bound on the empirical process in Theorem \ref{theo:emp-proc-bound}.

\begin{Lem} \label{lem:entropy}
  Under Assumptions \ref*{as:local-manifold} and \ref*{as:Lipschitz} we have, with a dimension dependent constant $K_p$, the following entropy condition
  \begin{align*}
    J_{[\, ]} (\eta, \mathcal{F}^i, \| \, \|) \le \int_0^\eta \limits \sqrt{1 + \log K_p + \frac{p}{\beta} \log \left(\frac{2^{1-\beta}}{\eps}\right) } \, d\eps
  \end{align*}
  for any $0 < \eta < \infty$ and any norm $\|\, \|$ on $\mathcal{F}^i := \Big\{ \tau^i(y, \cdot) - \tau^i(0, \cdot), y \in B_\delta(0) \Big\}$
\end{Lem}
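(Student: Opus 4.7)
The plan is to chain Theorem \ref{theo:numbers} with a standard Euclidean covering estimate, using the H\"older exponent $\beta$ to turn the H\"older bound of Assumption \ref{as:Lipschitz} into a genuine Lipschitz bound with respect to a modified metric on $B_\delta(0)$. For any $y \in B_\delta(0)$ and almost every $q \in Q$, Assumption \ref{as:Lipschitz}(ii) gives
\[ |\tau^i(y,q) - \tau^i(0,q)| \le \dot{\tau}^i(q) \|y\|^\beta \le \delta^\beta \dot{\tau}^i(q), \]
so $F_i := \delta^\beta \dot{\tau}^i$ serves as an envelope of $\mathcal{F}_i$ with $\|F_i\| = \delta^\beta \|\dot{\tau}^i\|$. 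Because $0 < \beta \le 1$, the map $d_i(x_1,x_2) := \|x_1 - x_2\|^\beta$ is a bona fide metric on $B_\delta(0) \subset \mathbb{R}^p$ (by subadditivity of $t \mapsto t^\beta$), and the H\"older inequality reads exactly as the Lipschitz hypothesis of Assumption \ref{as:Lipschitz0} with metric $d_i$ and ``slope'' $\dot{\tau}^i$.

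Feeding this into Theorem \ref{theo:numbers} gives
\[ N_{[\,]}(2\eps \|\dot{\tau}^i\|, \mathcal{F}_i, \|\,\|) \le N(\eps, B_\delta(0), d_i), \]
and the key observation is that a $d_i$-ball of radius $r$ is precisely a Euclidean ball of radius $r^{1/\beta}$, so the right-hand side equals $N(\eps^{1/\beta}, B_\delta(0), \|\cdot\|_2)$. A standard volume-comparison argument (pack $B_\delta(0)$ by disjoint Euclidean balls of radius $r/2$ and compare to $B_{\delta + r/2}(0)$) bounds the latter by $K_p (\delta/r)^p$ for a dimension-dependent constant $K_p$, valid for $r \le \delta$.

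It remains to substitute the resulting bracketing number into the definition of $J_{[\,]}$. The integrand takes $\eps \|F_i\|$ as its argument while Theorem \ref{theo:numbers} is phrased in terms of $2\eps' \|\dot{\tau}^i\|$; solving $2\eps'\|\dot{\tau}^i\| = \eps \|F_i\|$ yields $\eps' = \eps\delta^\beta/2$, so $(\eps')^{1/\beta} = \delta(\eps/2)^{1/\beta}$, and the powers of $\delta$ cancel cleanly against the $\delta^p$ in the covering bound. This produces a bound of the form $N_{[\,]}(\eps \|F_i\|, \mathcal{F}_i, \|\,\|) \le K_p (2/\eps)^{p/\beta}$; taking logarithms and absorbing the residual additive constant $p\log 2$ into $K_p$ (which remains dimension-dependent) recasts the estimate as $\log K_p + (p/\beta)\log(2^{1-\beta}/\eps)$, matching the integrand of the claim. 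I read the ``$m$'' in the displayed inequality as a typographical slip for the manifold dimension $p$, which is the only dimensional quantity naturally entering the covering bound, while $m$ is elsewhere reserved for $\#E$. The only non-routine aspect is bookkeeping the three distinct ``$\eps$'s''---the one inside Theorem \ref{theo:numbers}, the one parametrising $J_{[\,]}$ (which is scaled by $\|F_i\|$), and the Euclidean radius $\eps^{1/\beta}$ emerging after the H\"older-to-Lipschitz conversion---but this is clerical rather than a conceptual obstacle.
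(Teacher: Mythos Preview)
Your argument is correct and follows essentially the same route as the paper: convert the H\"older bound into a Lipschitz bound with respect to the metric $\|\cdot\|^\beta$, invoke Theorem~\ref{theo:numbers}, and plug in the Euclidean covering estimate $N(\kappa,B_\delta(0),\|\cdot\|^\beta)=K_p(\delta^\beta/\kappa)^{p/\beta}$. The only cosmetic differences are your choice of envelope $\delta^\beta\dot{\tau}^i$ versus the paper's $(2\delta)^\beta\dot{\tau}^i$ (which is why you need to absorb an extra $p\log 2$ into $K_p$), and your observation that the ``$m$'' in the displayed bound is a typo for the manifold dimension $p$---which the paper's own proof confirms.
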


\begin{proof}
  Note that $(2\delta)^\beta \| \dot{\tau}^i \|$ is an envelope function of $\mathcal{F}^i$, $\| \, \|^\beta$ is a norm for $0 < \beta \le 1$ and $\dot{\tau}^i$ takes the role of the Lipschitz ``constant'' in Theorem \ref{theo:numbers}. This yields
  \begin{align*}
    J_{[\, ]} (\eta, \mathcal{F}^i, \| \, \|) = \int_0^\eta \limits \sqrt{1 + \log N_{[\, ]}(\eps (2\delta)^\beta \| \dot{\tau}^i \|, \mathcal{F}^i, \| \, \|) } \, d\eps \le \int_0^\eta \limits \sqrt{1 + \log N(\eps (2\delta)^\beta/2, B_\delta(0), \| \, \|^\beta ) } \, d\eps \, .
  \end{align*}
  Next, we see that
  \begin{align*}
    N(\kappa, B_\delta(0), \| \, \|^\beta ) = K_p \left(\frac{\delta}{\kappa^{1/\beta}}\right)^p = K_p \left(\frac{\delta^\beta}{\kappa}\right)^{p/\beta}
  \end{align*}
  with $K_p$ being a dimension dependent constant and thus
  \begin{align*}
    J_{[\, ]} (\eta, \mathcal{F}^i, \| \, \|) \le \int_0^\eta \limits \sqrt{1 + \log K_p + \frac{p}{\beta} \log \left(\frac{2 \delta^\beta}{\eps (2\delta)^\beta}\right) } \, d\eps = \int_0^\eta \limits \sqrt{1 + \log K_p + \frac{p}{\beta} \log \left(\frac{2^{1-\beta}}{\eps}\right) } \, d\eps \, .
  \end{align*}
\end{proof}

Now, we can prove a bound on the empirical process.

\begin{Theorem} \label{theo:emp-proc-bound}
  Under Assumptions \ref*{as:BPSC}, \ref*{as:local-manifold} and \ref*{as:Lipschitz} there is a constant $C$ such that
  \begin{align*}
    \mathbb{E} \left[ \sup_{\|y\| < \delta} \limits | \mathbb{G}_n (\tau^i(y, X) - \tau^i(0, X)) | \right] \le C \delta^\beta
  \end{align*}
\end{Theorem}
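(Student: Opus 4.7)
The plan is to view the supremum in the statement as the norm $\|\mathbb{G}_n\|_{\mathcal{F}_i}$ of the empirical process indexed by the function class $\mathcal{F}_i = \{\tau^i(y,\cdot) - \tau^i(0,\cdot) : y \in B_\delta(0)\}$, and then apply the maximal inequality of Theorem \ref{theo:emp-proc-bound-raw}. This reduces the task to two sub-problems: exhibiting an envelope $F_i$ of $\mathcal{F}_i$ whose $L_2(P)$-norm scales as $\delta^\beta$, and bounding the bracketing entropy $J_{[\,]}(1, \mathcal{F}_i, L_2(P))$ by a constant not depending on $\delta$.

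For the envelope I would read it off directly from Assumption \ref{as:Lipschitz}(ii): for $y \in B_\delta(0)$ one has $|\tau^i(y,X) - \tau^i(0,X)| \le \delta^\beta \dot{\tau}^i(X)$ almost surely, so $F_i := \delta^\beta \dot{\tau}^i$ is a valid envelope and $\|F_i\|_{P,2} = \delta^\beta \|\dot{\tau}^i\|_{P,2}$, which is a finite multiple of $\delta^\beta$ by the second moment hypothesis on $\dot{\tau}^i$.

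For the entropy I would simply invoke Lemma \ref{lem:entropy} at $\eta = 1$. The resulting integral is finite, since its integrand grows only like $\sqrt{\log(1/\eps)}$ near zero, and crucially it does not involve $\delta$ at all---the $\delta$-dependence of the covering number of $B_\delta(0)$ measured in $\|\cdot\|^\beta$ cancels against the $\delta^\beta$ appearing in the envelope normalization that enters Theorem \ref{theo:numbers}.

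Combining these two pieces via Theorem \ref{theo:emp-proc-bound-raw} then immediately yields the bound $C\delta^\beta$, with $C$ absorbing the universal constant from the maximal inequality, the entropy integral, and $\|\dot{\tau}^i\|_{P,2}$. I do not anticipate any real obstacle here; the whole argument is essentially a bookkeeping exercise on top of the preparatory Lemma \ref{lem:entropy} and Theorem \ref{theo:emp-proc-bound-raw}, and the one conceptual point worth flagging is the $\delta$-independence of the entropy bound, which is precisely what lets the $\delta^\beta$ scaling of the envelope propagate intact to the supremum on the left-hand side.
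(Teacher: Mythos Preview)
Your proposal is correct and mirrors the paper's own proof essentially verbatim: the paper likewise combines Theorem \ref{theo:emp-proc-bound-raw} with Lemma \ref{lem:entropy}, using the envelope coming from Assumption \ref{as:Lipschitz}(ii) (the paper takes $(2\delta)^\beta\dot{\tau}^i$ rather than your $\delta^\beta\dot{\tau}^i$, an immaterial constant) and the observation that the entropy integral at $\eta=1$ is independent of $\delta$, so that only the envelope contributes the $\delta^\beta$ factor.
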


\begin{proof}
  The claim follows by combining Theorem \ref{theo:emp-proc-bound-raw} and Lemma \ref*{lem:entropy} since $(2\delta)^\beta \| \dot{\tau}^i \|$ is an envelope function of $\mathcal{F}^i$, thus giving the order $\delta^\beta$ while
  \begin{align*}
    J_{[\, ]} (1, \mathcal{F}^i, L_2(P)) \le \int_0^1 \limits \sqrt{1 + \log K_p + \frac{p}{\beta} \log \left(\frac{2^{1-\beta}}{\eps}\right) } \, d\eps
  \end{align*}
  is independent of $\delta$ and therefore only contributes a constant.
\end{proof}

This result is used extensively in the proofs of auxiliary results to the CLT below.

\pagebreak[0]
\subsection{Auxiliary Results Used in the Proof of Theorem \ref*{theo:CLT-m-variance}}\label{subsec-supp:aux-clt}

We prove two incarnations of a preliminary proposition to Theorem \ref*{theo:CLT-m-variance}. Under the additional Assumption \ref{as:Taylor} on the Fr\'echet function Proposition \ref{prop:CLT-manifold1} holds, which highlights an explicit order of convergence. Proposition \ref{prop:CLT-manifold2} holds without Assumption \ref{as:Taylor} but does not give a specific order of convergence.

\begin{As}[Smooth Fr\'echet Function]\label{as:Taylor}
  Under Assumption \ref*{as:finite}, assume for every $\mu^i \in E$ a rotation matrix $R_i \in SO(p)$ and $T_{i,1},\ldots,T_{i,p} \neq 0$. Furthermore, assume that the Fr\'echet function admits the power series expansion 
  \begin{align}
    G^i(x) &= G^i(0) + \sum_{j=1}^p \limits T_{i,j} |(R_i x)_{j}|^{r_i} + o(\|x\|^{r_i})\,.\label{eq:power_series_full}
  \end{align}
  where the same $r_i$ are assumed in Assumption \ref*{as:local-manifold}.
\end{As}
Note that for $r_i = 2$ Equation \eqref{eq:power_series_full} encompasses every possible covariance whereas for $r_i > 2$ this represents a restriction to easily tractable tensors.

\begin{Prop}[Preparation for CLT -- convergence rate] \label{prop:CLT-manifold1}
  Under Assumptions \ref*{as:BPSC}, \ref*{as:finite}, \ref*{as:local-manifold}, \ref*{as:Lipschitz} and \ref{as:Taylor} we have, for a measurable selection $\nun^i$ of $\delta$-local sample $i$-descriptors
  \begin{align*}
    \sqrt{n} (G^i_n(\nun^i) - G^i(0)) = \sqrt{n} (G^i_n(0) - G^i(0)) + \mathcal{O}_P\left(n^{-\beta/(2(r_i-\beta))}\right) \, .
  \end{align*}
\end{Prop}

\begin{proof}
  In \cite{EH19}, it was shown that the assumptions of the theorem imply an asymptotic rate $\nun = \mathcal{O}_P(n^{-1/(2(r_i-1))})$. Thus we see
  \begin{align*}
    \sqrt{n} (G^i_n(\nun) - G^i(0)) =& \sqrt{n} (G^i_n(0) - G^i(0)) + \sqrt{n} (G^i(\nun) - G^i(0))\\
    &+ \sqrt{n} (G^i_n(\nun) - G^i(\nun) - (G^i_n(0) - G^i(0))) \, .
  \end{align*}
  The second term yields, using Lemma 5.52 in \cite{vdV00} or correspondingly Lemma 2.9 in \cite{EH19}
  \begin{align*}
    \sqrt{n} (G^i(\nun) - G^i(0)) = \sqrt{n} O(\|\nun\|^{r}) = \mathcal{O}_P \left(n^{1/2-r_i/(2(r_i-\beta))}\right) = \mathcal{O}_P\left(n^{-\beta/(2(r_i-\beta))}\right) \, .
  \end{align*}
  For the third term we get
  \begin{align*}
    \MoveEqLeft[4]\mathbb{P} \left[ \left|\sqrt{n} (G^i_n(\nun) - G^i(\nun) - (G^i_n(0) - G^i(0)))\right| > \eps \right]\\
    \le& \mathbb{P} \left[ \sup_{|x| \le \delta} \left|\sqrt{n} (G^i_n(x) - G^i(x) - (G^i_n(0) - G^i(0))) \right| > \eps \right] + \mathbb{P}\left[|\nun|\ge \delta \right] \, .
  \end{align*}
  Now, we can apply the Markov inequality, Theorem \ref{theo:emp-proc-bound} from this appendix, and Lemma 2.9 of \cite{EH19} to get
  \begin{align*}
    \mathbb{P} \left[ \left|\sqrt{n} (G^i_n(\nun) - G^i(\nun) - (G^i_n(0) - G^i(0)))\right| > \eps \right] \le& C \delta^\beta + \mathbb{P}\left[|\nun|\ge \delta \right] = \mathcal{O}_P\left(n^{-\beta/(2(r_i-\beta))}\right) \, .
  \end{align*}
  The claim follows at once.
\end{proof}

\begin{Prop}[Preparation for CLT -- general] \label{prop:CLT-manifold2}
  Under Assumptions \ref*{as:BPSC}, \ref*{as:finite}, \ref*{as:local-manifold} and \ref*{as:Lipschitz} we have, for a measurable selection $\nun^i$ of $\delta$-local sample $i$-descriptors
  \begin{align*}
    \sqrt{n} (G^i_n(\nun^i) - G^i(0)) = \sqrt{n} (G^i_n(0) - G^i(0)) + o_P\left(1\right) \, .
  \end{align*}
\end{Prop}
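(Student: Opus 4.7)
The plan is to closely follow the strategy used in the proof of Proposition \ref{prop:CLT-manifold1}, omitting the rate statement since Assumption \ref{as:Taylor} is no longer available. First I would reformulate the claim as
\[
\sqrt{n}\bigl(G_n^i(\nun^i) - G_n^i(0)\bigr) = o_P(1),
\]
since the target identity is obtained from this by adding $\sqrt{n}(G_n^i(0) - G^i(0))$ to both sides.

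Next I would exploit a pair of bracketing inequalities to wipe out the deterministic drift term. By the minimizer property of $\nun^i \in E_n^i$ we have $G_n^i(\nun^i) \le G_n^i(0)$, while Assumption \ref{as:finite} ensures $\mu^i$ is a global minimizer of $F$, hence $G^i(0) \le G^i(\nun^i)$. Subtracting yields
\[
0 \le G_n^i(0) - G_n^i(\nun^i) \le \bigl(G_n^i(0) - G^i(0)\bigr) - \bigl(G_n^i(\nun^i) - G^i(\nun^i)\bigr),
\]
so the absolute left-hand side is dominated by a centered increment of the empirical process. Crucially, the term $G^i(\nun^i) - G^i(0)$, whose size cannot be quantified without the Taylor expansion in Assumption \ref{as:Taylor}, drops out of the upper bound.

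Then, for any $\eta \in (0,\delta]$, I would decompose
\[
\mathbb{P}\bigl[\sqrt{n}|G_n^i(\nun^i) - G_n^i(0)| > \eps\bigr] \le \mathbb{P}\!\left[\sup_{\|x\| \le \eta}\bigl|\mathbb{G}_n\bigl(\tau^i(x,\cdot) - \tau^i(0,\cdot)\bigr)\bigr| > \eps\right] + \mathbb{P}\bigl[\|\nun^i\| > \eta\bigr].
\]
Markov's inequality combined with Theorem \ref{theo:emp-proc-bound} bounds the first term by $C\eta^\beta/\eps$, while Assumption \ref{as:BPSC} forces the second term to vanish as $n \to \infty$.

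Finally, a two-step choice closes the argument: first fix $\eta > 0$ small enough that $C\eta^\beta/\eps$ falls below any prescribed tolerance, then let $n$ be large enough that $\mathbb{P}[\|\nun^i\| > \eta]$ is likewise small. The one subtlety is that, without Assumption \ref{as:Taylor}, I cannot tie $\eta$ to an explicit power of $n$, which is precisely why the conclusion is $o_P(1)$ rather than the polynomial rate obtained in Proposition \ref{prop:CLT-manifold1}; otherwise the argument is mechanically identical.
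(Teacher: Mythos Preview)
Your proposal is correct and follows essentially the same approach as the paper: the paper refers back to the bracketing argument of Proposition~\ref{prop:CLT-metric} (which is exactly your step using $G_n^i(\nun^i)\le G_n^i(0)$ and $G^i(0)\le G^i(\nun^i)$), then applies Markov, Theorem~\ref{theo:emp-proc-bound}, and BPSC before letting the radius shrink to zero. Your use of a separate symbol $\eta\in(0,\delta]$ is a bit cleaner than the paper's overloading of $\delta$, but the arguments are otherwise identical.
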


\begin{proof}
  We use the analogous argument to the proof of Proposition \ref{prop:CLT-metric} to show, using the Markov inequality
  \begin{align*}
    \mathbb{P} \left[ \left|\sqrt{n} (G^i_n(\nun) - G^i_n(0))\right| > \eps \right] \le& \frac{1}{\epsilon} \mathbb{E} \left[ \sup_{\|y\| < \delta} \limits | \mathbb{G}_n (\tau^i(y, X) - \tau^i(0, X)) | \right] + \mathbb{P}\left[|\nun^i|\ge \delta \right] \, .
  \end{align*}
  At this point we exploit the additional structure present in the finite dimensional manifold setting by applying Theorem \ref{theo:emp-proc-bound} and Assumption \ref*{as:BPSC} and taking the limit $\delta \to 0$ to get
  \begin{align*}
    \mathbb{P} \left[ \left|\sqrt{n} (G^i_n(\nun) - G^i_n(0))\right| > \eps \right] \le& \frac{C}{\epsilon} \delta^\beta + \mathbb{P}\left[|\nun^i|\ge \delta \right] = o_P(1) \, .
  \end{align*}
  This proves the claim.
\end{proof}

\begin{Rm}
  From Proposition \ref{prop:CLT-manifold2} it follows that Assumption \ref{as:Taylor} is not necessary for the results we aim to prove here. However, Proposition \ref{prop:CLT-manifold1} highlights that the asymptotic rate of the m-variance is not affected by smeariness, discussed in \cite{EH19}. Instead, the rate remains $n^{-1/2}$, even if the order of the Fr\'echet function $r_i$ in Assumption \ref{as:Taylor} or the H\"older continuity order $\beta$ in Assumption \ref*{as:Lipschitz} depart from their standard values $r_i=2$ and $\beta=1$. This is an interesting side result in its own right. Furthermore, the rate shown in Proposition \ref{prop:CLT-manifold1} may be regarded as a step in the direction of a Berry-Esseen type result for the m-variance.
\end{Rm}

The proof of Theorem \ref*{theo:CLT-m-variance} is in the main text.

\subsection{Auxiliary Result Used in the Proof of Theorem \ref*{theo:varvarCLT}}

\begin{Prop} \label{prop:varvarSimp}
  Under Assumptions \ref*{as:BPSC}, \ref*{as:finite}, \ref*{as:local-manifold} and \ref*{as:Lipschitz2}, for a measurable selection of $\delta$-local sample $i$-descriptors $\nun^i$ and for all combinations of $i,j \in \{1,\dots, m\}$
  \begin{align*}
    \sqrt{n} \big(G^{ij}_n(\nun^i, \nun^j) - G^{ij}(0,0)\big) = \sqrt{n} \big(G^{ij}_n(0,0) - G^{ij}(0,0)\big) + o_P(1)
  \end{align*}
\end{Prop}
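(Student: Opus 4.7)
The plan is to mirror the proof of Proposition \ref{prop:CLT-manifold2} in the bivariate setting, extending $\tau^i$ on $B_\delta(0)\subset U_i$ to $\tau^{ij}$ on the product ball $B_\delta(0)\times B_\delta(0)\subset U_i\times U_j\subset\mathbb{R}^{2p}$. Rewriting the target as $\sqrt{n}(G^{ij}_n(\nun^i,\nun^j)-G^{ij}_n(0,0))=o_P(1)$ and restricting to the event $\{|\nun^i|,|\nun^j|<\delta\}$, whose complement vanishes by Assumption \ref{as:BPSC}, Markov's inequality gives
\begin{align*}
\mathbb{P}\bigl[\sqrt{n}|G^{ij}_n(\nun^i,\nun^j)-G^{ij}_n(0,0)|>\eps\bigr] &\le \tfrac{1}{\eps}\,\mathbb{E}\!\left[\sup_{\|y^i\|,\|y^j\|<\delta}\!|\mathbb{G}_n(\tau^{ij}(y^i,y^j,\cdot)-\tau^{ij}(0,0,\cdot))|\right]\\
&\quad+\mathbb{P}[|\nun^i|\ge\delta]+\mathbb{P}[|\nun^j|\ge\delta]\,,
\end{align*}
after which $\delta\to 0$ is taken following $n\to\infty$.

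The technical core is a bivariate version of Theorem \ref{theo:emp-proc-bound} for the class $\mathcal{F}_{ij}:=\{\tau^{ij}(y^i,y^j,\cdot)-\tau^{ij}(0,0,\cdot)\,:\,(y^i,y^j)\in B_\delta(0)\times B_\delta(0)\}$. The Remark following Assumption \ref{as:Lipschitz2} provides precisely the H\"older bound required to apply Theorem \ref{theo:numbers}, with envelope $(2\delta)^\beta\dot{\tau}^{ij}$ and $\mathbb{E}[\dot{\tau}^{ij}(X)^2]<\infty$. Lemma \ref{lem:entropy} then extends essentially verbatim with $p$ replaced by $2p$ (the dimension of the product domain), and Theorem \ref{theo:emp-proc-bound-raw} delivers $\mathbb{E}[\sup_{\|y^i\|,\|y^j\|<\delta}|\mathbb{G}_n(\tau^{ij}(y^i,y^j,\cdot)-\tau^{ij}(0,0,\cdot))|]\le C\delta^\beta\to 0$ as $\delta\to 0$.

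The main new obstacle compared with Proposition \ref{prop:CLT-manifold2} is that $(\nun^i,\nun^j)$ is not a joint minimizer of $G^{ij}_n$ and $(0,0)$ is not a joint minimizer of $G^{ij}$. Consequently the implicit minimizer cancellation used previously -- which reduces $\sqrt{n}|G^i_n(\nun^i)-G^i_n(0)|$ to $|\mathbb{G}_n(\tau^i(\nun^i,\cdot)-\tau^i(0,\cdot))|$ via $G^i_n(\nun^i)\le G^i_n(0)$ together with $G^i(\nun^i)\ge G^i(0)$ -- does not apply directly. One has to separately control the deterministic drift $\sqrt{n}(G^{ij}(\nun^i,\nun^j)-G^{ij}(0,0))$, which by the product structure of $\tau^{ij}$ decomposes into contributions linear in $\nun^i$ and $\nun^j$ with coefficients $\mathbb{E}[\dot{\tau}^i_0(X)\tau^j(0,X)]$ and $\mathbb{E}[\tau^i(0,X)\dot{\tau}^j_0(X)]$. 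Combining the H\"older estimate from the Remark with the componentwise consistency rates inherited from Propositions \ref{prop:CLT-manifold1}/\ref{prop:CLT-manifold2} delivers the required $o_P(1)$ drift bound, and the argument then closes exactly as in Proposition \ref{prop:CLT-manifold2}.
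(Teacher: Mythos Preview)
Your overall strategy coincides with the paper's: both run the argument of Proposition~\ref{prop:CLT-manifold2} (and ultimately Proposition~\ref{prop:CLT-metric}) on the product function $\tau^{ij}$, using the bivariate H\"older bound from the Remark after Assumption~\ref{as:Lipschitz2} together with Theorem~\ref{theo:emp-proc-bound} on the $2p$-dimensional domain. The paper's written proof in fact reproduces the display from Proposition~\ref{prop:CLT-metric} verbatim, including the opening step that drops the absolute value and the subsequent step that inserts the population drift; both of those rely on the minimizer inequalities $G^{ij}_n(\nun^i,\nun^j)\le G^{ij}_n(0,0)$ and $G^{ij}(0,0)\le G^{ij}(\nun^i,\nun^j)$---precisely the structure you correctly flag as absent. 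So your diagnosis is sharper than the paper's presentation, which simply glosses over this point.

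Your proposed remedy for the drift, however, does not close. The first-order condition at a minimizer gives only $\mathbb{E}[\dot{\tau}^i_0(X)]=0$; the mixed moments $\mathbb{E}[\dot{\tau}^i_0(X)\,\tau^j(0,X)]$ have no reason to vanish, so the leading drift contribution is genuinely linear in $(\nun^i,\nun^j)$. Even granting the rate $\nun^i=\mathcal{O}_P(n^{-1/(2(r-\beta))})$ from Proposition~\ref{prop:CLT-manifold1} (which needs Assumption~\ref{as:Taylor}, not among the stated hypotheses), one gets $\sqrt{n}\,\nun^i=\mathcal{O}_P(n^{(r-2\beta)/(2(r-\beta))})$, which in the standard case $r=2$, $\beta=1$ is $\mathcal{O}_P(1)$, not $o_P(1)$; the cruder H\"older route gives the same order. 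A concrete check: for the Euclidean mean $\rho(p,q)=(p-q)^2$ with $i=j$ one finds $\sqrt{n}\bigl(G^{ii}_n(\nun,\nun)-G^{ii}_n(0,0)\bigr)=-4\sqrt{n}\,\nun\cdot n^{-1}\sum_k X_k^3+o_P(1)$, which is $\mathcal{O}_P(1)$ whenever $\mathbb{E}[X^3]\neq 0$. Hence neither your argument nor the paper's proof as written actually controls the drift term under the stated assumptions.
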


\begin{proof}
  We follow the strategy laid out in \cite{DM18}~Proposition~3.
  \begin{align*}
    \MoveEqLeft[4] \mathbb{P} \left[ \sqrt{n} |G^{ij}_n(\nun^i, \nun^j) - G^{ij}_n(0,0)| > \eps \right] = \mathbb{P} \left[ \sqrt{n} \left( G^{ij}_n(0,0) - G^{ij}_n(\nun^i, \nun^j) \right) > \eps \right]\\
    \le& \mathbb{P} \left[ \sqrt{n} \left( G^{ij}_n(0,0) - G^{ij}_n(\nun^i, \nun^j) - \left( G^{ij}(0,0) - G^{ij}(\nun^i, \nun^j) \right) \right) > \eps \right]\\
    =& \mathbb{P} \left[ \sqrt{n} \left| G^{ij}_n(\nun^i, \nun^j) - G^{ij}(\nun^i, \nun^j) - \left( G^{ij}_n(0,0) - G^{ij}(0,0) \right) \right| > \eps \right]\\
    \le& \mathbb{P} \left[ \sup_{|x^i| \le \delta^i, \, |x^j| \le \delta^j} \limits \sqrt{n} \left| G^{ij}_n(x^i, x^j) - G^{ij}(x^i, x^j) - \left( G^{ij}_n(0,0) - G^{ij}(0,0) \right) \right| > \eps \right]\\
    &+ \mathbb{P} [|\nun^i| \ge \delta_i] + \mathbb{P} [|\nun^j| \ge \delta_j]
  \end{align*}
  Now, we can apply the Markov inequality, Theorem \ref{theo:emp-proc-bound} from this appendix, Lemma 2.9 in \cite{EH19} and Assumption \ref*{as:BPSC} to get
  \begin{align*}
    \mathbb{P} \left[ \sqrt{n} |G^{ij}_n(\nun^i, \nun^j) - G^{ij}_n(0,0)| > \eps \right] \le& C (\delta_i + \delta_j)^\beta + \mathbb{P} [|\nun^i| \ge \delta_i] + \mathbb{P} [|\nun^j| \ge \delta_j] = o_P(1) \, .
  \end{align*}
  This proves the claim.
\end{proof}

The proof of Theorem \ref*{theo:varvarCLT} is in the main text.

\subsection{Proof of Theorem \ref*{theo:varvarCLT}}\label{subsec-supp:cov-clt}

We follow the strategy laid out in \cite{DM18}~Proposition~4. Using $q \in \mathbb{R}^m$ we denote
\begin{align*}
  D_n^{v}(q,X) :=& \sum_{i=1}^m \limits \sum_{j=1}^m \limits v_i v_j G_n^{ij}(q_i,q_j,X)\\
  D^{v}(q,X) :=& \sum_{i=1}^m \limits \sum_{j=1}^m \limits v_i v_j G^{ij}(q_i,q_j,X)
\end{align*}
Writing $\nun := (\nun^1 \dots \nun^m)^T$, note that
\begin{align*}
  v^T \textnormal{Cov}[\tau(0,X)] v &= \mathbb{E}\left[(v^T \tau(0, X))^2\right] - \mathbb{E}\left[v^T \tau(0, X)\right]^2\\
  &= D^{v}(0,X) - \left( \sum_{i=1}^m \limits v_i G^i(0,X) \right)^2\\
  v^T \textnormal{Cov}[\tau_n^*(0,X^*)] v &= \frac{1}{n} \sum_{j=1}^n \limits \left( v^T \tau(\nun, X_j) \right)^2 - \left( \frac{1}{n} \sum_{j=1}^n \limits \left( v^T \tau(\nun, X_j) \right) \right)^2\\
  &= D_n^{v}(\nun,X) - \left( \sum_{i=1}^m \limits v_i G_n^i(\nun^i,X) \right)^2 \, .
\end{align*}
Therefore, we consider a two-dimensional setting and we get from Proposition \ref{prop:varvarSimp} and Corollary \ref*{cor:CLT-m-variance2}
\begin{align*}
  \mathcal{W}:=&\sqrt{n}
  \begin{pmatrix}
    D_n^{v}(\nun,X) &-& D^{v}(0,X)\\
    \sum_{i=1}^m \limits v_i G_n^i(\nun^i,X) &-& \sum_{i=1}^m \limits v_i G^i(0,X)
  \end{pmatrix}\\
  =&
  \sqrt{n}
  \begin{pmatrix}
    D_n^{v}(0,X) &-& D^{v}(0,X)\\
    \sum_{i=1}^m \limits v_i G_n^i(0,X) &-& \sum_{i=1}^m \limits v_i G^i(0,X)
  \end{pmatrix}
  + o_P(1) \inD \mathcal{N}\left( 0 , \Sigma_{v,2} \right) \, ,
\end{align*}
with convergence by the standard CLT.

Here, writing $C^{v} := \textnormal{Cov}\left[\left(v^T \tau(0, X) \right)^2, v^T \tau(0, X)\right]$, the covariance is
\begin{align*}
  \Sigma_{v,2} = 
  \begin{pmatrix}
    \textnormal{Var}\left[\left(v^T \tau(0, X) \right)^2\right] & C^{v}\\
    C^{v} & \textnormal{Var}\left[v^T \tau(0, X)\right]
  \end{pmatrix} \, .
\end{align*}
With the function $f (x, y) := x - y^2$ we have
\begin{align*}
  v^T \textnormal{Cov}[\tau(0,X)] v = f\left( \mathbb{E}\left[(v^T \tau(0, X))^2\right], \mathbb{E}\left[v^T \tau(0, X)\right] \right)\, ,
\end{align*}
and thus asymptotic normality of $v^T \textnormal{Cov}[\tau_n^*(0,X^*)] v$ follows with the delta method, where the Variance is
\begin{align*}
  W_v &= \begin{pmatrix} 1, & -2 \mathbb{E}\left[v^T \tau(0, X)\right] \end{pmatrix} \Sigma_{v,2} \begin{pmatrix} 1 \\ -2 \mathbb{E}\left[v^T \tau(0, X)\right] \end{pmatrix}\\
  &= \textnormal{Var}\left[\left(v^T \tau(0, X) \right)^2\right] - 4 \|v\|_1 V C^v + 4 \|v\|_1^2 V^2 \textnormal{Var}\left[v^T \tau(0, X)\right] \, .
\end{align*}
where the last equality follows from the fact that $\mathbb{E}\left[v^T \tau(0, X)\right] = \|v\|_1 V$.

\subsection{Proof of Theorem \ref*{theo:quantileLLN}}\label{subsec-supp:quantiles}

Equation (\ref*{eq:quant1}) follows from Theorems \ref*{theo:CLT-m-variance} and \ref{theo:normal-quantiles} and Equation (\ref*{eq:quant1b}) follows from Theorems \ref*{theo:CLT-m-variance} and \ref{theo:normal-quantiles-m4}. From Corollary \ref*{cor:CLT-boot-m-variance} we have for any fixed sampling sequence $X_1 , \dots$ with $\nun^i$ and $\nun^j$ for $i,j \in \{1, \dots, m\}$ with $i \neq j$
\begin{align*}
  \lim_{n \to \infty} \limits 2 \cdot \mathbb{P} \left(V_{n,n}^{*,j} - V_{n,n}^{*,i} - (\Vn^j - \Vn^i) < \sqrt{\frac{\widehat{W}^{ji}_n}{n}} q\left(\frac{\alpha}{2} \right) \right) = \alpha \, .
\end{align*}
Note that in this equation and all the following, the probability is taken over all samples with the distribution measure of $X$ and the corresponding bootstrap samples with their discrete uniform bootstrap measure. In consequence, the probability is to be understood as a deterministic value. We can condition on any subset of sampling sequences without loss of generality to get
\begin{align*}
  \lim_{n \to \infty} \limits 2 \cdot \mathbb{P} \left(V_{n,n}^{*,j} - V_{n,n}^{*,i} - (\Vn^j - \Vn^i) < \sqrt{\frac{\widehat{W}^{ji}_n}{n}} q\left(\frac{\alpha}{2} \right) \, \middle| \, \Vn \in A^{\alpha,\eps}_{n,i} \right) = \alpha
\end{align*}
for any fixed $\eps > 0$. Now, for all $k \in \mathbb{N}$ define $n_k$ such that
\begin{align*}
  \forall n \ge n_k : \, \left| 2 \cdot \mathbb{P} \left(V_{n,n}^{*,j} - V_{n,n}^{*,i} - (\Vn^j - \Vn^i) < \sqrt{\frac{\widehat{W}^{ji}_n}{n}} q\left(\frac{\alpha}{2} \right) \, \middle| \, \Vn \in A^{\alpha,2^{-1-k}}_{n,i} \right) - \alpha \right| < 2^{-1-k}
\end{align*}
where the convergence for fixed $\eps$ guarantees the existence of these $n_k$. Now define
\begin{align*}
  \eps_n = \left\{ \begin{array}{ll} 1/2 & \text{for } n < n_1 \\ 2^{-1-k} &  \text{for } n_k \le n < n_{k+1} \end{array} \right.
\end{align*}
and note that
\begin{align*}
  \lim_{n \to \infty} \limits 2 \cdot \mathbb{P} \left(V_{n,n}^{*,j} - V_{n,n}^{*,i} - (\Vn^j - \Vn^i) < \sqrt{\frac{\widehat{W}^{ji}_n}{n}} q\left(\frac{\alpha}{2} \right) \, \middle| \, \Vn \in A^{\alpha,\eps_n}_{n,i} \right) = \alpha \, .
\end{align*}
and then deduce from Theorem \ref*{theo:varvarCLT}
\begin{align*}
  \alpha = & \lim_{n \to \infty} \limits 2 \cdot \mathbb{P} \left(V_{n,n}^{*,j} - V_{n,n}^{*,i} - (\Vn^j - \Vn^i) < \sqrt{\frac{\widehat{W}^{ji}_n}{n}} q\left(\frac{\alpha}{2} \right) \, \middle| \, \Vn \in A^{\alpha,\eps_n}_{n,i} \right)\\
  \le& \lim_{n \to \infty} \limits 2 \cdot \mathbb{P} \left(V_{n,n}^{*,j} - V_{n,n}^{*,i} < \frac{\sqrt{\widehat{W}^{ji}_n} - \sqrt{W^{ji}}}{\sqrt{n}} q\left(\frac{\alpha}{2} \right) \, \middle| \, \Vn \in A^{\alpha,\eps_n}_{n,i} \right)\\
  =& \lim_{n \to \infty} \limits 2 \cdot \mathbb{P} \left(V_{n,n}^{*,j} - V_{n,n}^{*,i} < \mathcal{O}_P(n^{-1}) \, \middle| \, \Vn \in A^{\alpha,\eps_n}_{n,i} \right)\\
  =& \lim_{n \to \infty} \limits 2 \cdot \mathbb{P} \left(V_{n,n}^{*} \notin A^{1}_{n,ij} \, \middle| \, \Vn \in A^{\alpha,\eps_n}_{n,i} \right)\\
  \le& \lim_{n \to \infty} \limits 2 \cdot \mathbb{P} \left(V_{n,n}^{*} \notin A^{1}_{n,i} \, \middle| \, \Vn \in A^{\alpha,\eps_n}_{n,i} \right)
\end{align*}
which yields equation (\ref*{eq:quant2}).

\subsection{Test Size for Fr\'echet Function with \texorpdfstring{$m$}{m} Minima}

Consider a random vector $X \sim N(0,\Sigma)$ and for any $i,j \in \{1, 2, \dots, m\}$ with $i \neq j$ let $e_{ij} := (e_i -e_j)$. Define the following sets
\begin{align*}
  A^\alpha_{ij} &= \left\{x \in \mathbb{R}^m : e_{ij}^T x \le \left( e_{ij}^T \Sigma e_{ij} \right)^{1/2} q\left(\frac{\alpha}{2} \right) \right\} & A^\alpha_i :=& \bigcap_{\substack{j= 1 \\ j \neq i}}^m \limits A^\alpha_{ij} \, .
\end{align*}

\begin{Theorem} \label{theo:normal-quantiles}
  Let $X \sim N(0,\Sigma)$ with $\Sigma$ being of full rank be a multivariate normal random vector in $\mathbb{R}^m$ for $m \in \{2,3\}$. Then for all $\alpha \in [0,1]$
  \begin{align*}
    \sum_{i=1}^m \limits \mathbb{P} \left( A^{\alpha}_i \right) \le \alpha
  \end{align*}
\end{Theorem}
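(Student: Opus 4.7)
For $m = 2$ the event $A_1^\alpha$ is the single one-sided condition $X_1 - X_2 \le \sigma_{12}\, q(\alpha/2)$ on the centred normal $X_1 - X_2 \sim N(0, \sigma_{12}^2)$, so $\mathbb{P}(A_1^\alpha) = \mathbb{P}(A_2^\alpha) = \alpha/2$ and the sum equals $\alpha$ with equality.

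The substantive case is $m = 3$; the argument below genuinely uses $m = 3$, and the planar reduction is exactly what breaks down for $m \ge 4$. Write $c := q(\alpha/2) \le 0$. On $A_i^\alpha$ one has $X_i - X_j \le \sigma_{ij} c \le 0$ for each $j \neq i$, so $X_i$ is the minimum of the three coordinates; this minimum is almost surely unique, hence $A_1^\alpha, A_2^\alpha, A_3^\alpha$ are pairwise disjoint and
\[
  \sum_{i=1}^3 \mathbb{P}(A_i^\alpha) \;=\; \mathbb{P}\Bigl(\bigcup_{i=1}^3 A_i^\alpha\Bigr).
\]

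Next, I would reduce to a standard planar Gaussian. All three events depend on $X$ only through the standardised differences $D_{ij} := (X_i - X_j)/\sigma_{ij}$, and the relation $\sigma_{12} D_{12} + \sigma_{23} D_{23} + \sigma_{31} D_{31} = 0$ forces these to span a two-dimensional subspace of $L^2(\Omega)$. Choosing an orthonormal basis of this subspace writes $D_{ij} = \hat u_{ij} \cdot Z$ with $Z \sim N(0, I_2)$ and unit vectors $\hat u_{ij} \in \mathbb{R}^2$ satisfying $\hat u_{ji} = -\hat u_{ij}$. The key geometric identity, which I would establish by the law of cosines applied to $\mathrm{Cov}(X_i - X_j, X_i - X_k) = \sigma_{ij}\sigma_{ik}\cos\theta_i$, is that the angle between $\hat u_{ij}$ and $\hat u_{ik}$ equals the interior angle $\theta_i$ at vertex $X_i$ of the Euclidean triangle with side lengths $\sigma_{ij}$, so in particular $\theta_1 + \theta_2 + \theta_3 = \pi$.

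Now pass to polar coordinates $Z = R(\cos\Psi, \sin\Psi)$ with $R$ Rayleigh and $\Psi$ uniform on $[0, 2\pi)$, independent, and set $\gamma(r) := \arccos(|c|/r) \in (0, \pi/2]$ for $r > |c|$. Each condition $\hat u_{ij} \cdot Z \le c$ becomes the event that $\Psi$ lies in an arc of length $2\gamma$, so $A_i^\alpha$ conditional on $R = r$ is the intersection of two such arcs whose centres are at angular distance $\theta_i$, i.e.\ an arc of length $\max(0, 2\gamma - \theta_i)$. A short case analysis on the number of indices with $\theta_i > 2\gamma$, using $\sum_i \theta_i = \pi$ and $\gamma \le \pi/2$, yields the arc-length inequality
\[
  \sum_{i=1}^3 \max\bigl(0,\, 2\gamma - \theta_i\bigr) \;\le\; 4\gamma,
\]
which combined with the disjointness of the three arcs gives $\mathbb{P}\bigl(\bigcup_i A_i^\alpha \mid R = r\bigr) \le 4\gamma(r)/(2\pi)$. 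This upper bound coincides with $\mathbb{P}(|\hat v \cdot Z| > |c| \mid R = r)$ for any fixed unit $\hat v \in \mathbb{R}^2$; integrating over $R$ then produces $\mathbb{P}(\bigcup_i A_i^\alpha) \le 2\Phi(c) = \alpha$.

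\textbf{The hard part} is the geometric identification of the pairwise angles between the $\hat u_{ij}$ with the interior angles of the triangle with sides $\sigma_{ij}$, together with the elementary but slightly fiddly case analysis behind $\sum_i (2\gamma - \theta_i)_+ \le 4\gamma$ under $\sum \theta_i = \pi$ and $\gamma \le \pi/2$; everything else is bookkeeping with polar coordinates.
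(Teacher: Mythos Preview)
Your proof is correct but takes a genuinely different route from the paper. For $m=2$ both arguments coincide. For $m=3$, the paper's proof is a four-line set-algebraic manipulation: starting from
\[
\sum_{i=1}^3 \mathbb{P}(A_i^\alpha)=\mathbb{P}(A_{12}^\alpha\cap A_{13}^\alpha)+\mathbb{P}(A_{21}^\alpha\cap A_{23}^\alpha)+\mathbb{P}(A_{31}^\alpha\cap A_{32}^\alpha),
\]
it regroups the three summands by elementary inclusion--exclusion identities until the expression is dominated by $\mathbb{P}(A_{13}^\alpha)+\mathbb{P}(A_{23}^\alpha)=\tfrac{\alpha}{2}+\tfrac{\alpha}{2}=\alpha$. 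No geometry, no conditioning, no use of the Gaussian structure beyond $\mathbb{P}(A_{ij}^\alpha)=\alpha/2$.

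Your argument is instead geometric: you exploit the linear relation $\sum\sigma_{ij}D_{ij}=0$ to project onto a two-dimensional standard Gaussian, identify the pairwise angles between the unit vectors $\hat u_{ij}$ with the interior angles of the triangle of side lengths $\sigma_{ij}$ via the law of cosines, and then run a polar-coordinate calculation together with the arc-length inequality $\sum_i(2\gamma-\theta_i)_+\le 4\gamma$ under $\sum_i\theta_i=\pi$, $\gamma\le\pi/2$. This is considerably longer, but it buys two things the paper's argument does not: it makes transparent \emph{why} $m=3$ is special (the differences live in a plane, so the angle budget is exactly $\pi$), and it explains exactly where the argument fails for $m\ge 4$, which in the paper requires a separate and much heavier treatment (Theorem~\ref{theo:normal-quantiles-m4}). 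The paper's approach, on the other hand, is far shorter and needs nothing beyond basic set algebra. One small gap worth noting in your write-up: you implicitly assume the three pairwise variances $\sigma_{ij}$ are strictly positive and that the differences genuinely span a two-dimensional space; the degenerate cases are easy but should be acknowledged.
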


\begin{proof}
  For $m=2$ we get $\sum_{i=1}^2 \limits \mathbb{P} \left( A^{\alpha}_i \right) = \mathbb{P} \left( A^{\alpha}_{12} \right) + \mathbb{P} \left( A^{\alpha}_{21} \right) = \alpha$.
  
  For $m=3$ we get
  \begin{align*}
    \sum_{i=1}^3 \limits \mathbb{P} \left( A^{\alpha}_i \right) =& \mathbb{P} \left( A^{\alpha}_{12} \cap A^{\alpha}_{13} \right) + \mathbb{P} \left( A^{\alpha}_{21} \cap A^{\alpha}_{23} \right) + \mathbb{P} \left( A^{\alpha}_{13} \cap A^{\alpha}_{23} \right)\\
    =& \mathbb{P} \left( A^{\alpha}_{12} \cap A^{\alpha}_{13} \right) + \mathbb{P} \left( A^{\alpha}_{21} \cap A^{\alpha}_{13} \cap A^{\alpha}_{23} \right) + \mathbb{P} \left( (A^{\alpha}_{21} \cup A^{\alpha}_{13} ) \cap A^{\alpha}_{23} \right)\\
    =& \mathbb{P} \left( (A^{\alpha}_{12} \cup (A^{\alpha}_{21} \cap A^{\alpha}_{23})) \cap A^{\alpha}_{13} \right) + \mathbb{P} \left( (A^{\alpha}_{21} \cup A^{\alpha}_{13} ) \cap A^{\alpha}_{23} \right)\\
    \le& \mathbb{P} \left( A^{\alpha}_{13} \right) + \mathbb{P} \left( A^{\alpha}_{23} \right) = \alpha \, .
  \end{align*}
\end{proof}

\begin{Theorem} \label{theo:normal-quantiles-m4}
  Let $X \sim N(0,\Sigma)$ with $\Sigma$ being of full rank be a multivariate normal random vector in $\mathbb{R}^m$ for $m \ge 4$. Then for sufficiently small $\alpha \ge 0$
  \begin{align*}
    \sum_{i=1}^m \limits \mathbb{P} \left( A^\alpha_i \right) \le \alpha \, ,
  \end{align*}
\end{Theorem}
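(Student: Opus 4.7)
The plan is to show that for $m\ge 4$ each summand $\mathbb{P}(A^\alpha_i)$ is in fact $o(\alpha)$ as $\alpha\to 0$, after which the bound follows since there are only finitely many terms. Set $s:=-q(\alpha/2)>0$, so $s\to\infty$ as $\alpha\to 0$, and for each fixed $i$ introduce the centered Gaussian vector $Y^{(i)}:=(X_j-X_i)_{j\ne i}\in\mathbb{R}^{m-1}$ with covariance $\Sigma^{(i)}$ and marginal standard deviations $\sigma_{ij}=(e_{ij}^T\Sigma e_{ij})^{1/2}$. Then $A^\alpha_i$ is the orthant-type event $\{Y^{(i)}_j\ge\sigma_{ij}\,s\ \forall j\ne i\}$, which for $m\ge 4$ involves at least three simultaneous one-sided constraints.

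The central tool is a linear-combination bound. For any $\nu\in\mathbb{R}^{m-1}_{\ge 0}\setminus\{0\}$, on $A^\alpha_i$ we have $\nu^TY^{(i)}\ge s\,\nu^T\sigma^{(i)}$, and $\nu^TY^{(i)}\sim\mathcal{N}(0,\nu^T\Sigma^{(i)}\nu)$, so
\[
\mathbb{P}(A^\alpha_i)\ \le\ 1-\Phi(\kappa_i\,s),\qquad \kappa_i:=\sup_{\nu\ge 0,\,\nu\ne 0}\frac{\nu^T\sigma^{(i)}}{\sqrt{\nu^T\Sigma^{(i)}\nu}}.
\]
Combining this with the standard Mills-ratio bounds $1-\Phi(\kappa_i s)\le\phi(\kappa_i s)/(\kappa_i s)$ and $\alpha/2=\Phi(-s)\ge\phi(s)\,s/(1+s^2)$ yields
\[
\frac{\mathbb{P}(A^\alpha_i)}{\alpha/2}\ \le\ \frac{1+s^2}{\kappa_i\,s^2}\,\exp\!\Bigl(-\tfrac{\kappa_i^2-1}{2}\,s^2\Bigr),
\]
which tends to $0$ as $s\to\infty$ provided that $\kappa_i>1$ strictly. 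Summing over the finite set $i=1,\dots,m$ then gives $\sum_i\mathbb{P}(A^\alpha_i)=o(\alpha)$, so the desired inequality holds for all sufficiently small $\alpha$.

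The main obstacle is the strict inequality $\kappa_i>1$, which is what genuinely uses $m-1\ge 2$ constraints. The choice $\nu=e_{j_0}$ always gives ratio $1$, so one must perturb: take $\nu=e_{j_0}+\eta e_{j_1}$ with $\eta>0$ small and expand the squared ratio, whose derivative at $\eta=0$ is proportional to $\sigma_{ij_0}\sigma_{ij_1}-(\Sigma^{(i)})_{j_0 j_1}$. By Cauchy-Schwarz this is nonnegative, and strictly positive unless $Y^{(i)}_{j_0}$ and $Y^{(i)}_{j_1}$ are perfectly correlated. Under the implicit non-degeneracy of $\Sigma$ (no two loss differences are a.s.\ proportional), picking any $j_0\ne j_1$ in $\{1,\dots,m\}\setminus\{i\}$ produces such a perturbation, and $\kappa_i>1$ follows. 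The pathological fully correlated case (rank-one $\Sigma^{(i)}$) reduces the orthant to a single constraint and must be excluded, or handled by passing to the effective lower-dimensional subspace; this edge case is the only genuinely delicate point of the argument.
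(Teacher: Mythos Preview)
Your argument is correct and reaches the same conclusion as the paper---each $\mathbb{P}(A^\alpha_i)=o(\alpha)$---but by a genuinely different and more elementary route. The paper writes $f_i(\alpha)=\mathbb{P}(A^\alpha_i)$ as a Gaussian integral, transforms to ``homoscedastic'' coordinates, differentiates in $\alpha$, and then applies the divergence theorem to turn $f_i'(\alpha)$ into a boundary integral over $\partial B^1_i$; a separate geometric lemma (Lemma~\ref{lem:quantile-aux}) about projection onto the cone $B^1_i$ is then invoked to show the boundary integral vanishes as $\alpha\to 0$. Your approach bypasses all of this: you bound the orthant probability by a single optimal linear combination and use Mills ratios directly, so the only nontrivial input is the strict inequality $\kappa_i>1$.

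The two key inequalities are in fact dual formulations of the same geometric fact. In the paper's coordinates the constraints are $v_{ij}^T z\le 0$ with $\|v_{ij}\|=1$ and $v_{ij}^T w_i=1$; the paper asserts (without proof) that $\operatorname{dist}(w_i,B^1_i)^2=:1+\delta_i>1$, which holds precisely when no two $v_{ij}$ coincide---equivalently, when $\Sigma$ is nonsingular so that no two differences $X_i-X_j$ are perfectly positively correlated. Your perturbation argument establishes exactly this non-degeneracy in the dual picture, and you are explicit that positive-definiteness of $\Sigma$ (implicit in the paper via $\Sigma^{-1}$ and $\det\Sigma$) is what rules out the rank-one edge case. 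Your proof is shorter and avoids both the divergence theorem and the auxiliary projection lemma; the paper's version, on the other hand, yields a somewhat more explicit bound on $f_i'(\alpha)$ for all $\alpha$, not only the limit. As a minor remark, your argument already works for $m\ge 3$ (two constraints suffice for the perturbation), so it also gives an alternative proof of the $m=3$ case handled separately in Theorem~\ref{theo:normal-quantiles}.
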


\begin{proof}
  Define the functions
  \begin{align*}
    f_i : \alpha \mapsto \frac{1}{(2\pi)^{\frac{m}{2}} (\det\Sigma)^{1/2}}\int_{A^\alpha_{i}} \limits \exp \left( - \frac{1}{2} x^T \Sigma^{-1} x \right) \, dx \, .
  \end{align*}
  The claim follows, if $f_i'(0) = 0$ for all $i$, therefore we will show this. In order to simplify calculations, we make a basis transform to ``homoscedastic coordinates''. Let $\widetilde{v}_{ij} := \Sigma^{1/2} e_{ij}$, $v_{ij} := \frac{\widetilde{v}_{ij}}{|\widetilde{v}_{ij}|}$ and $\widetilde{y} := \Sigma^{-1/2} x$. Let $e_0 := \frac{1}{\sqrt{m}} \sum_{i=1}^m \limits e_i$ and define
  \begin{align*}
    H_i :=& \begin{pmatrix} \Sigma^{1/2} e_0 & v_{i1} & \dots & v_{i(i-1)} & v_{i(i+1)} & \dots & v_{im} \end{pmatrix}^T & w_i :=& H_i^{-1} \begin{pmatrix} 0 & 1 & \dots & 1 \end{pmatrix}^T \, .
  \end{align*}
  This allows us to define $y := \widetilde{y} - w_i q\left(\frac{\alpha}{2} \right)$. Then, the following sets are equivalent to the $A^\alpha_{ij}$ defined above
  \begin{align*}
    B^\alpha_{ij} :=& \left\{\widetilde{y} \in \mathbb{R}^m : v_{ij}^T \widetilde{y} \le q\left(\frac{\alpha}{2} \right) \right\} & B^\alpha_i :=& \bigcap_{\substack{j= 1 \\ j \neq i}}^m \limits B^\alpha_{ij} \, .
  \end{align*}
  We note that $\widetilde{y} \in B^\alpha_{i} \, \Leftrightarrow \, y \in B^1_{i}$. We can now introduce shorthand notation and express the above defined functions $f_i$ as
  \begin{align*}
    \exp_i(y,\alpha) :=& \exp \left( - \frac{1}{2} \left|y + w_i q\left(\frac{\alpha}{2} \right) \right|^2 \right) & f_i(\alpha) =& \frac{1}{(2\pi)^{\frac{m}{2}}}\int_{B^1_{i}} \limits \exp_i(y,\alpha) \, dy \,.
  \end{align*}
  In the following we suppress the arguments of $q$ and $\exp_i$ to calculate the derivative with respect to $\alpha$
  \begin{align*}
    f_i'(\alpha) &= - \frac{q'}{2(2\pi)^{\frac{m}{2}}} \int_{B^1_{i}} \limits w_i^T \left(y + w_i q \right) \exp_i \, dz
  \end{align*}
  To simplify this expression we define the boundary of $B^1_i$ as
  \begin{align*}
    \partial B^1_{ij} :=& \big\{y \in B^1_{i} : v_{ij}^T y = 0 \big\} \, , & \partial B^1_{i} :=& \bigcup_{\substack{j= 1 \\ j \neq i}}^m \partial B^1_{ij}  \, ,
  \end{align*}
  and we use the fact that for every $j \neq i$ we have $w_i^T v_{ij} = 1$. Then, we get by applying Gauss' integral theorem, substituting $y = -q z$ (note that $q < 0$ for $\alpha < 1$) and using $q' = (2\pi)^{1/2} \exp \left( \frac{1}{2} q^2 \right)$ we get
  \begin{align*}
    f_i'(\alpha) &= \frac{q'}{2(2\pi)^{\frac{m}{2}}} \int_{B^1_{i}} \limits \textnormal{div}_y \big( w_i \exp_i \big) \, dy = \frac{q'}{2(2\pi)^{\frac{m}{2}}} \int_{\partial B^1_{i}} \limits \exp_i \, dy\\
    &= \frac{(-q)^{m-1}}{2(2\pi)^{\frac{m-1}{2}}} \int_{\partial B^1_{i}} \limits \exp \left( - \frac{1}{2} \left( (z -w_i)^2 - 1 \right) q^2 \right) dz \, .
  \end{align*}
  Let $p_i := \argmin_{p \in B_i^1} \limits |p-w_i|$ and note that we have $|p_i-w_i|^2 =: 1 + \delta_i > 1$. Furthermore, let
  \begin{align*}
    C_i :=& \left\{ z \in \mathbb{R}^m : (w_i - p_i)^T z \le 0 \right\} & \partial C_i :=& \left\{ z \in \mathbb{R}^m : (w_i - p_i)^T z = 0 \right\}
  \end{align*}
  where, according to Lemma \ref{lem:quantile-aux}, $B_i \subset C_i$ and $p_i \in \partial C_i$ and for $z \in \partial C_i$, one has $(z-w_i)^2 = (z-p_i)^2 + (p_i-w_i)^2$. Then we calculate,
  \begin{align*}
    f_i'(\alpha) \le& \frac{(-q)^{m-1}}{2(2\pi)^{\frac{m-1}{2}}} \int_{\partial C^1_{i}} \limits \exp \left( - \frac{1}{2} \left( (z -w_i)^2 - 1 \right) q^2 \right) dz\\
    =& \frac{(-q)^{m-1}}{2(2\pi)^{\frac{m-1}{2}}} \int_{\partial C^1_{i}} \limits \exp \left( - \frac{1}{2} \left( ((z-p_i)^2 + (p_i-w_i)^2 - 1 \right) q^2 \right) dz\\
    =& \frac{1}{2} \exp \left( - \frac{1}{2} \left( (p_i-w_i)^2 - 1 \right) q^2 \right) = \frac{1}{2} \exp \left( - \frac{1}{2} \delta_i q^2 \right) \, .
  \end{align*}
  Now, one can immediately read off $\lim_{\alpha \to 0} \limits f_i'(\alpha) = \lim_{q \to - \infty} \limits f_i'(\alpha) = 0$, which proves the claim.
\end{proof}

\begin{Lem} \label{lem:quantile-aux}
  Consider the following sets defined in Theorem \ref{theo:normal-quantiles-m4}
  \begin{align*}
    B^1_{ij} :=& \left\{z \in \mathbb{R}^m : v_{ij}^T z \le 0 \right\} & B^1_i :=& \bigcap_{\substack{j= 1 \\ j \neq i}}^m \limits B^1_{ij}
  \end{align*}
  and note that for all $i \in \{1, \dots, m \}$ the set $B_i^1$ is convex and if $v \in B_i^1$ then $\lambda v \in B_i^1$ for every $\lambda \ge 0$. Now, for $w_i \notin B_i^1$ let $p_i := \argmin_{p \in B_i^1} \limits |p-w_i|$ and define  
  \begin{align*}
    C_i :=& \left\{ z \in \mathbb{R}^m : (w_i - p_i)^T z \le 0 \right\} & \partial C_i :=& \left\{ z \in \mathbb{R}^m : (w_i - p_i)^T z = 0 \right\} \, .
  \end{align*}
  Then we have
  \begin{enumerate}[(i)]
    \item $(w_i - p_i)^T p_i = 0$ and thus $p_i \in \partial C_i$
    \item $B_i \subset C_i$
    \item For $z \in \partial C_i$ one has $(z-w_i)^2 = (z-p_i)^2 + (p_i-w_i)^2$.
  \end{enumerate}
\end{Lem}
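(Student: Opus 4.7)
The plan is to reduce all three items to the standard variational characterization of the metric projection onto a closed convex cone, using that $B_i^1$ is a finite intersection of closed half-spaces through the origin, hence a closed convex cone.

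For (i), I would exploit the cone property directly: since $\lambda p_i \in B_i^1$ for every $\lambda \geq 0$, the scalar function $\varphi(\lambda) := |\lambda p_i - w_i|^2$ attains its minimum on $[0,\infty)$ at $\lambda = 1$, because $p_i$ is a minimizer of $|p-w_i|^2$ on $B_i^1$ and $p_i \neq 0$ whenever $w_i \notin B_i^1$ (else one could rescale). Since $\lambda = 1$ is an interior point of $[0,\infty)$, the derivative must vanish there: $\varphi'(1) = 2(p_i - w_i)^T p_i = 0$, which gives $(w_i - p_i)^T p_i = 0$ and hence $p_i \in \partial C_i$.

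For (ii), I would use the first-order optimality condition for projection onto a convex set. For any $q \in B_i^1$ and $t \in (0,1]$, convexity gives $p_i + t(q-p_i) \in B_i^1$, so
\begin{align*}
|p_i + t(q-p_i) - w_i|^2 \ge |p_i - w_i|^2,
\end{align*}
and expanding and letting $t \to 0^+$ yields $(w_i - p_i)^T(q - p_i) \le 0$. Combined with (i), this gives $(w_i - p_i)^T q \le 0$ for every $q \in B_i^1$, which is precisely the statement $B_i^1 \subset C_i$.

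For (iii), the identity is the Pythagorean theorem. For $z \in \partial C_i$ one has $(w_i - p_i)^T z = 0$ by definition, and $(w_i - p_i)^T p_i = 0$ by (i); subtracting gives $(w_i - p_i)^T(z - p_i) = 0$, i.e. $(z - p_i) \perp (p_i - w_i)$, so
\begin{align*}
(z - w_i)^2 = \bigl((z - p_i) + (p_i - w_i)\bigr)^2 = (z - p_i)^2 + (p_i - w_i)^2,
\end{align*}
since the cross term vanishes. There is no real obstacle here; the only point worth double-checking is that $p_i \neq 0$ is needed in (i) to justify varying $\lambda$ in both directions around $1$, and this is automatic from $w_i \notin B_i^1$ (if $p_i = 0$, then $0$ is the nearest point of $B_i^1$ to $w_i$ and $(w_i - p_i)^T p_i = 0$ holds trivially).
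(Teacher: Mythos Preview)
Your proof is correct and follows essentially the same approach as the paper: for (i) both you and the paper vary along the ray $\{\lambda p_i : \lambda \ge 0\}\subset B_i^1$, for (ii) both vary along the segment from $p_i$ to an arbitrary point of $B_i^1$, and (iii) is the same direct computation; the only difference is that you phrase the optimality arguments via first-order conditions (derivative vanishing, projection inequality) while the paper writes out the quadratic expansions explicitly and argues by contradiction. One small slip: the parenthetical claim ``$p_i \neq 0$ whenever $w_i \notin B_i^1$'' is false in general for a closed convex cone (e.g.\ $w_i$ in the polar cone projects to $0$), but you correctly note at the end that the case $p_i=0$ gives $(w_i-p_i)^Tp_i=0$ trivially, so the argument stands.
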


\begin{proof} $ $
  \begin{enumerate}[(i)]
    \item The proof is done by contradiction. As first case, assume $(w_i - p_i)^T p_i > 0$. Then, for $\lambda > 1$, let
    \begin{align*}
      |w_i - \lambda p_i|^2 - |w_i - p_i|^2 = (\lambda^2 - 1) p_i^T p_i -2 (\lambda - 1) w_i^T p_i = (\lambda - 1) \left( (\lambda - 1) p_i^T p_i - 2 (w_i - p_i)^T p_i \right)
    \end{align*}
    Therefore,
    \begin{align*}
      |w_i - \lambda p_i|^2 < |w_i - p_i|^2 \quad \Leftrightarrow \quad 1 < \lambda < 1 + 2 \frac{(w_i - p_i)^T p_i}{p_i^T p_i} \, ,
    \end{align*}
    which can be satisfied. Therefore, $p_i \neq \argmin_{p \in B_i^1} \limits |p-w_i|$ and we have a contradiction.
    
    As second case, let $(w_i - p_i)^T p_i < 0$. Then, for $\lambda < 1$ we get by the same computation
    \begin{align*}
      |w_i - \lambda p_i|^2 < |w_i - p_i|^2 \quad \Leftrightarrow \quad 1 + 2 \frac{(w_i - p_i)^T p_i}{p_i^T p_i} < \lambda < 1 \, ,
    \end{align*}
    which again can be satisfied, such that we get a contradiction.
    \item The proof is done by contradiction. Assume $z \in B_i^1$ such that $z \notin C_i$. Because $B_i^1$ is convex, we know that for every $\lambda \in [0,1]$ we have $p_\lambda := (1- \lambda) p_i + \lambda z \in B_i^1$. Now, note that
    \begin{align*}
      |w_i - p_\lambda|^2 - |w_i - p_i|^2 =& |w_i - p_i + \lambda (p_i -z)|^2 - |w_i - p_i|^2 = 2 \lambda (w_i - p_i)^T (p_i - z) + \lambda^2 |p_i - z|^2\\
      =& \lambda^2 |p_i - z|^2 - 2 \lambda (w_i - p_i)^T z \, .
    \end{align*}
    In consequence,
    \begin{align*}
      |w_i - p_\lambda|^2 < |w_i - p_i|^2 \quad \Leftrightarrow \quad \lambda < 2 \frac{(w_i - p_i)^T z}{|p_i - z|^2} \, ,
    \end{align*}
    which can be satisfied. Thus, we have a contradiction.
    \item The proof is done by direct calculation.
    \begin{align*}
      |z-p_i|^2 + |p_i-w_i|^2 - |z-w_i|^2 =& |z|^2 - 2 p_i^T z + |p_i|^2 + |w_i|^2 - 2 w_i^T p_i + |p_i|^2 - |z|^2 + 2 w_i^T z - |w_i|^2\\
      =& 2 (w_i - p_i)^T z - 2 (w_i - p_i)^T p_i = 0
    \end{align*}
  \end{enumerate}
  \vspace*{-\baselineskip}
  \phantom{}
\end{proof}

\newpage
\section{Additional Background on Applications}\label{section-supp:application}

\subsection{Rome Wind Directions}\label{subsec-supp:wind}

Here, we show some more illustrations to give a more complete picture of the wind direction data set. This is still far from a full analysis of the data set, but it serves to illustrate the points made in the article.

\begin{figure}[h!]
  \centering
  \includegraphics[width=0.2\textwidth]{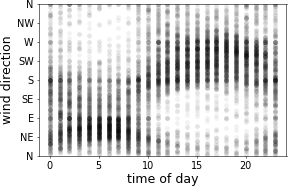}
  \includegraphics[width=0.2\textwidth]{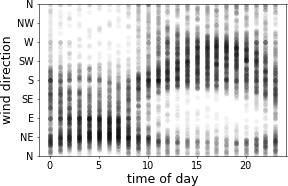}
  \includegraphics[width=0.2\textwidth]{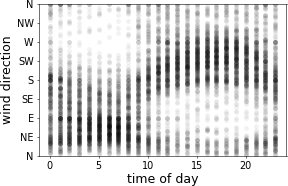}
  \includegraphics[width=0.2\textwidth]{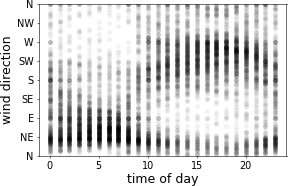}\\
  \includegraphics[width=0.2\textwidth]{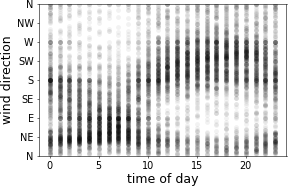}
  \includegraphics[width=0.2\textwidth]{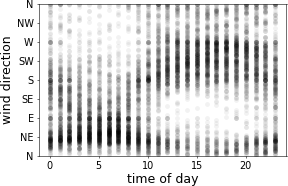}
  \includegraphics[width=0.2\textwidth]{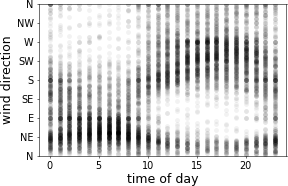}
  \includegraphics[width=0.2\textwidth]{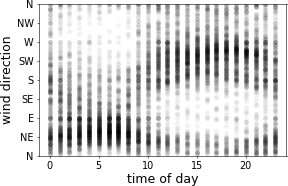}\\
  \includegraphics[width=0.2\textwidth]{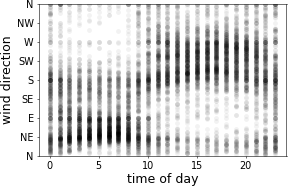}
  \includegraphics[width=0.2\textwidth]{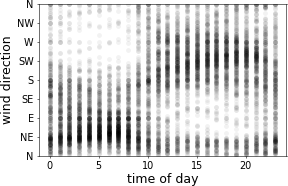}
  \includegraphics[width=0.2\textwidth]{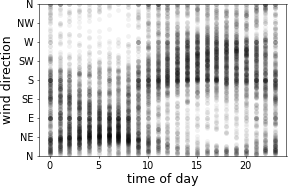}
  \includegraphics[width=0.2\textwidth]{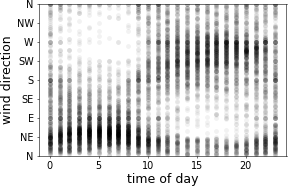}\\
  \includegraphics[width=0.2\textwidth]{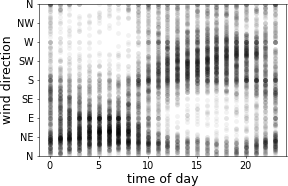}
  \includegraphics[width=0.2\textwidth]{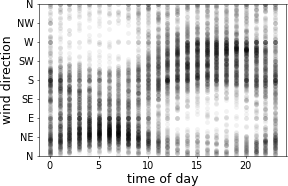}
  \includegraphics[width=0.2\textwidth]{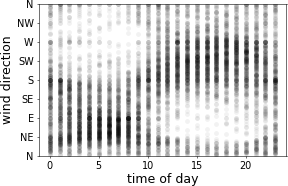}
  \includegraphics[width=0.2\textwidth]{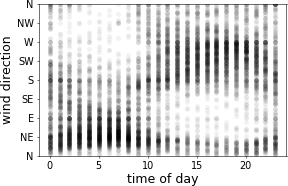}\\
  \includegraphics[width=0.2\textwidth]{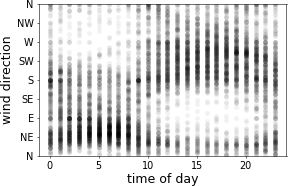}
  \includegraphics[width=0.2\textwidth]{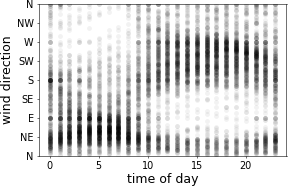}
  \includegraphics[width=0.2\textwidth]{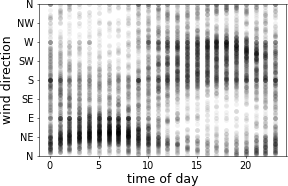}
  \includegraphics[width=0.2\textwidth]{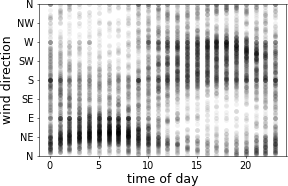}
  \caption{Daily wind pattern illustrated for all years from 2000 to 2019. The plot clearly demonstrates that during night an morning the predominant wind direction is from the northeast, whereas in the afternoon and evening, wind comes predominantly from the southwest. Overall, the southwestern wind covers a longer share of the day, which illustrates that the overall predominant wind direction is still southwest, as expected in a temperate zone. A slight seasonal effect is visible in the fact that for some days, wind in the afternoon still comes from the northeast.\label{fig:wind-daily}}
\end{figure}

\begin{figure}[h!]
  \centering
  \includegraphics[width=0.2\textwidth]{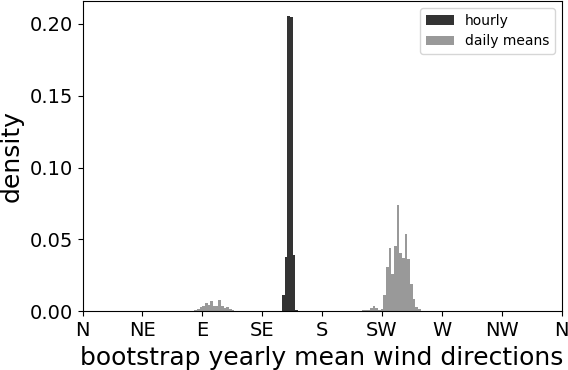}
  \includegraphics[width=0.2\textwidth]{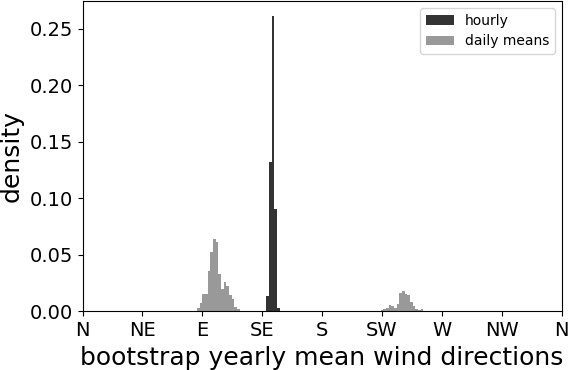}
  \includegraphics[width=0.2\textwidth]{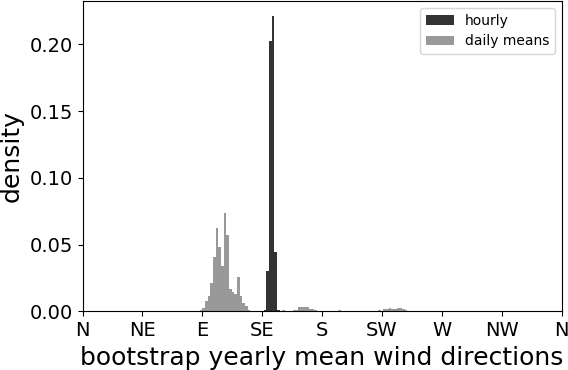}
  \includegraphics[width=0.2\textwidth]{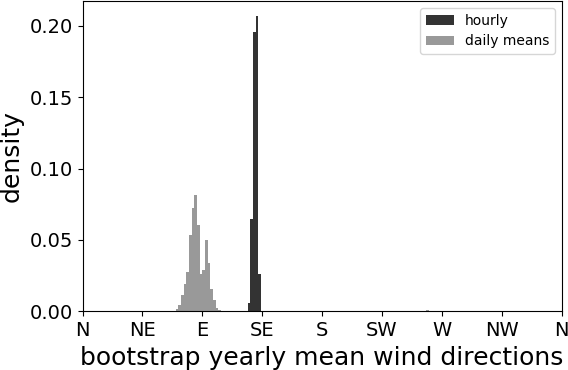}\\
  \includegraphics[width=0.2\textwidth]{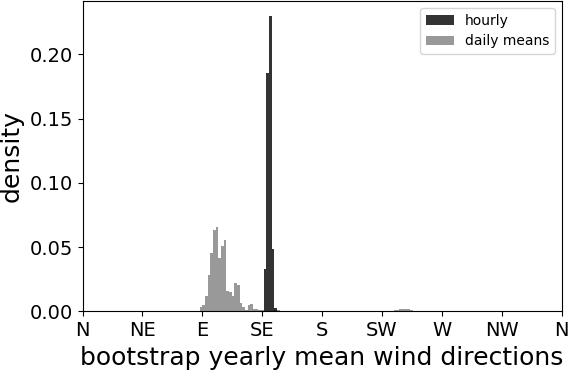}
  \includegraphics[width=0.2\textwidth]{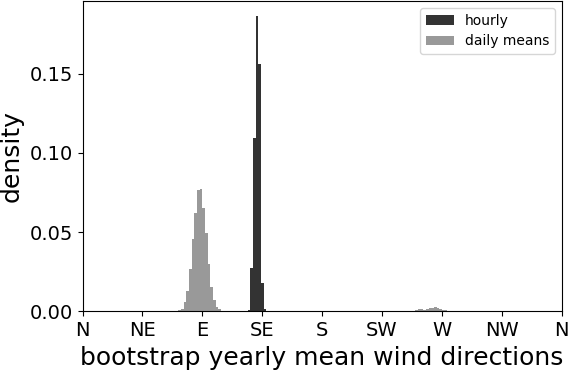}
  \includegraphics[width=0.2\textwidth]{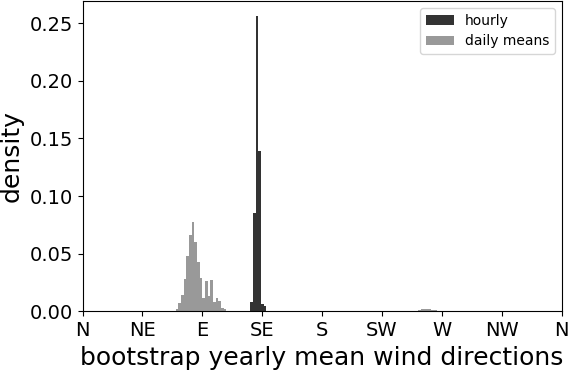}
  \includegraphics[width=0.2\textwidth]{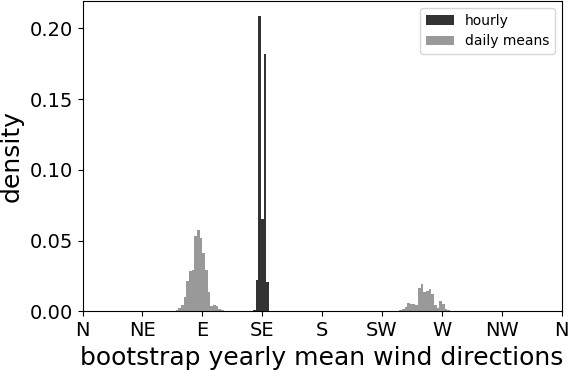}\\
  \includegraphics[width=0.2\textwidth]{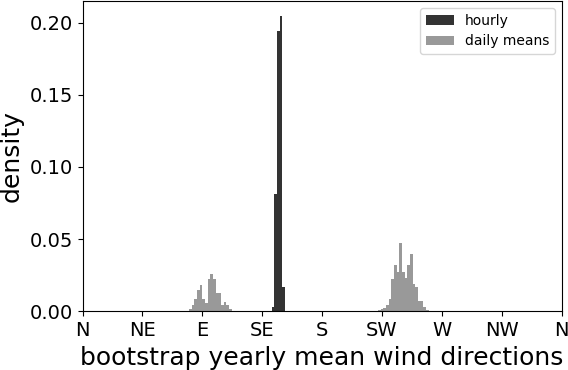}
  \includegraphics[width=0.2\textwidth]{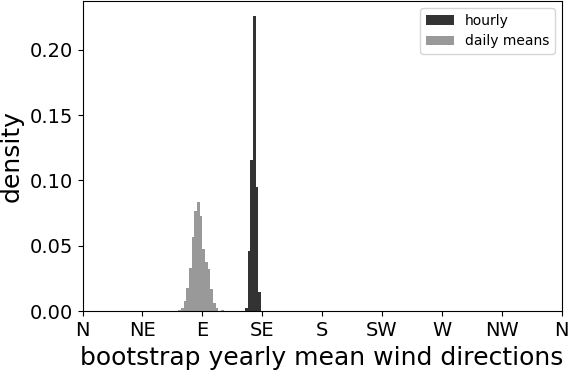}
  \includegraphics[width=0.2\textwidth]{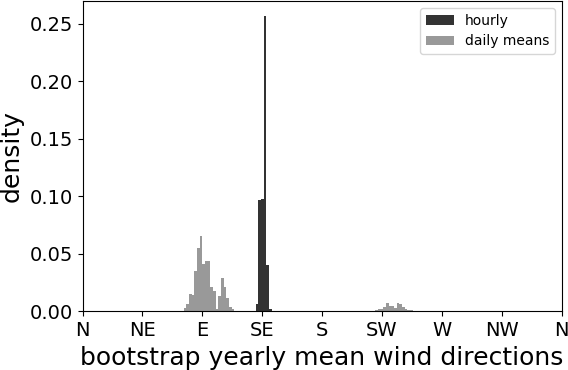}
  \includegraphics[width=0.2\textwidth]{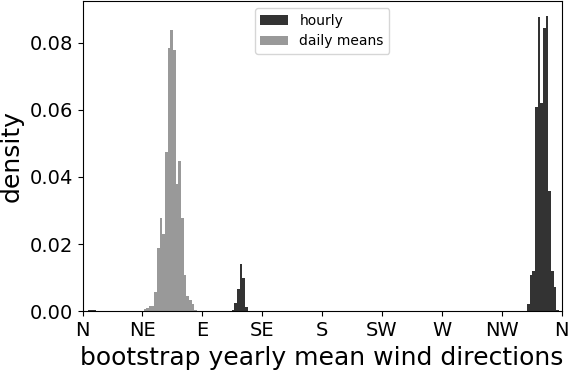}\\
  \includegraphics[width=0.2\textwidth]{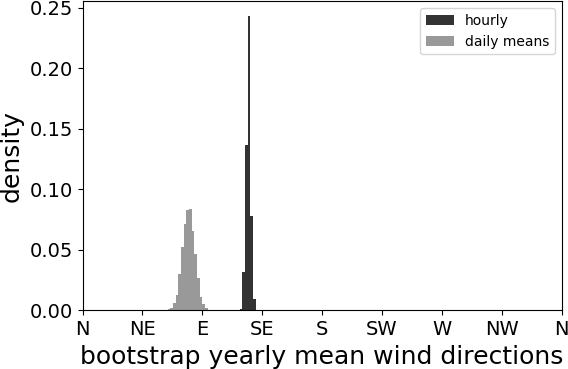}
  \includegraphics[width=0.2\textwidth]{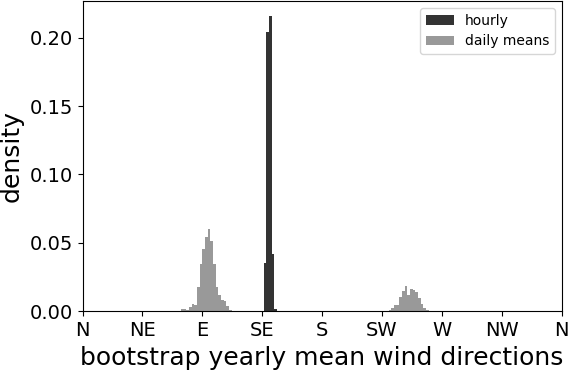}
  \includegraphics[width=0.2\textwidth]{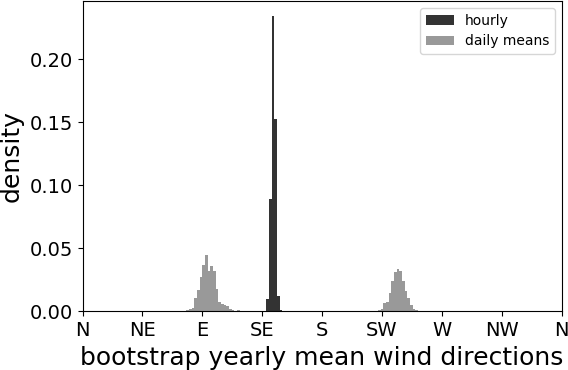}
  \includegraphics[width=0.2\textwidth]{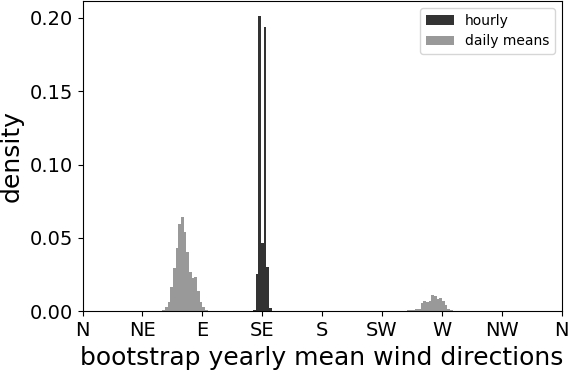}\\
  \includegraphics[width=0.2\textwidth]{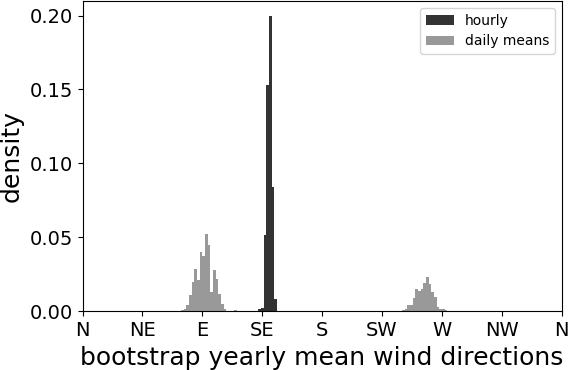}
  \includegraphics[width=0.2\textwidth]{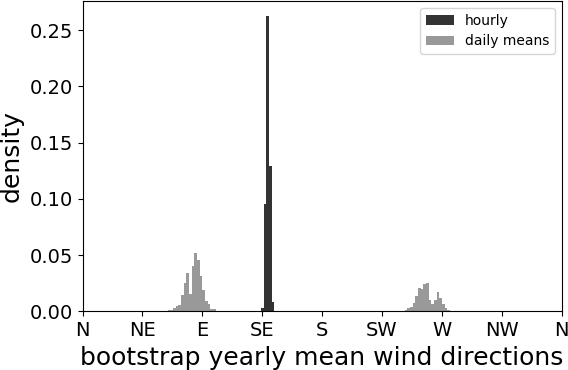}
  \includegraphics[width=0.2\textwidth]{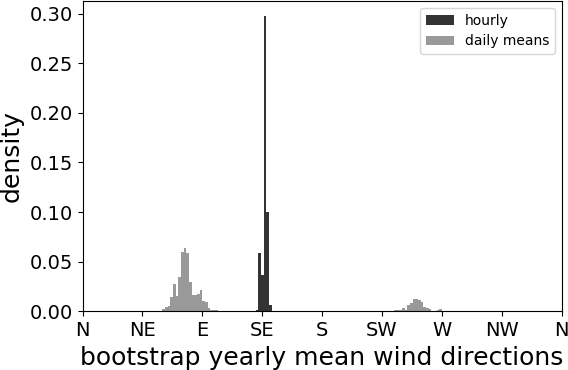}
  \includegraphics[width=0.2\textwidth]{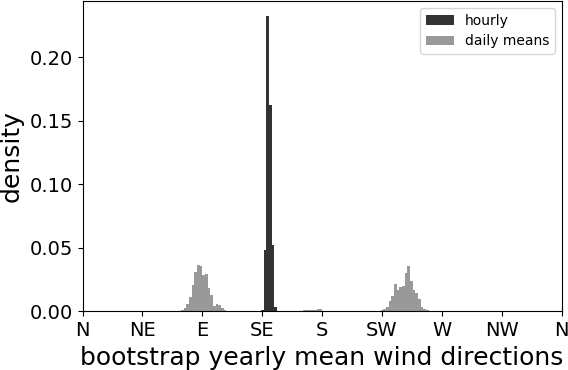}
  \caption{Bootstrap mean wind directions for all years from 2000 to 2019. The dark histograms indicate the means of hourly wind directions while the lighter histograms indicate means of the daily mean wind directions. One can clearly see that the distribution of means of the hourly wind directions is unimodal for all years except 2011. The means of daily means are much more prone to exhibiting a multimodal distribution, indicating non-uniqueness. \label{fig:wind-bootstrap}}
\end{figure}

\newpage
\subsection{Nesting Sea Turtles}\label{subsec-supp:turtles}

As another classic example of the mean on the circle, we consider the turtle data set presented by \cite{Stephens1969} and \cite[p. 9]{MJ00}, which gives the compass directions under which $n=76$ female turtles leave their nests after egg laying. This data set was discussed above as an example of finite sample smeariness, so we expect the test not to reject.

\begin{figure}[h!]
  \centering
  \subcaptionbox{histogram of turtle directions}[0.45\textwidth]{\includegraphics[width=0.4\textwidth]{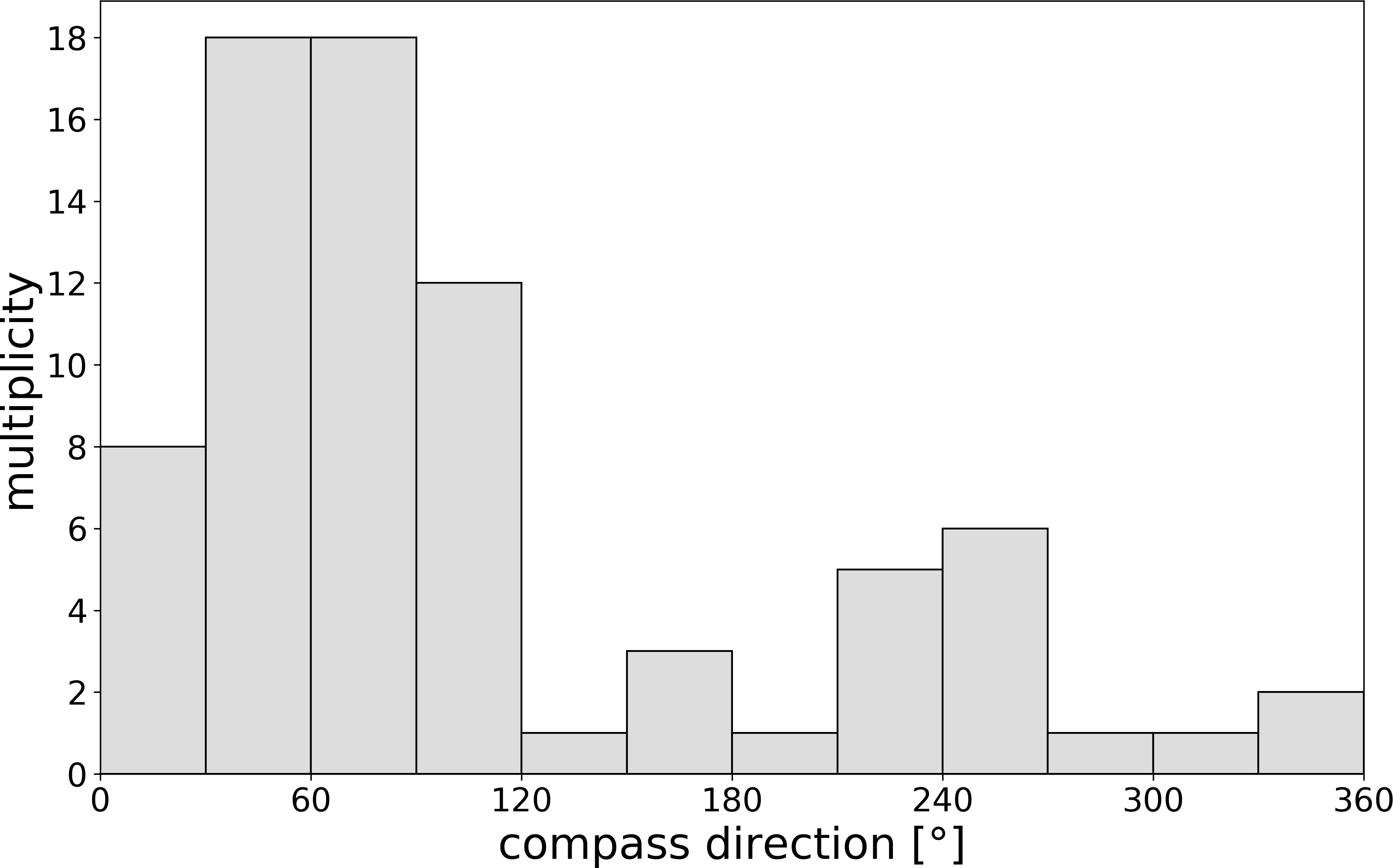}}
  \hspace*{0.02\textwidth}
  \subcaptionbox{histogram of bootstrap mean distances $d_j$}[0.45\textwidth]{\includegraphics[width=0.4\textwidth]{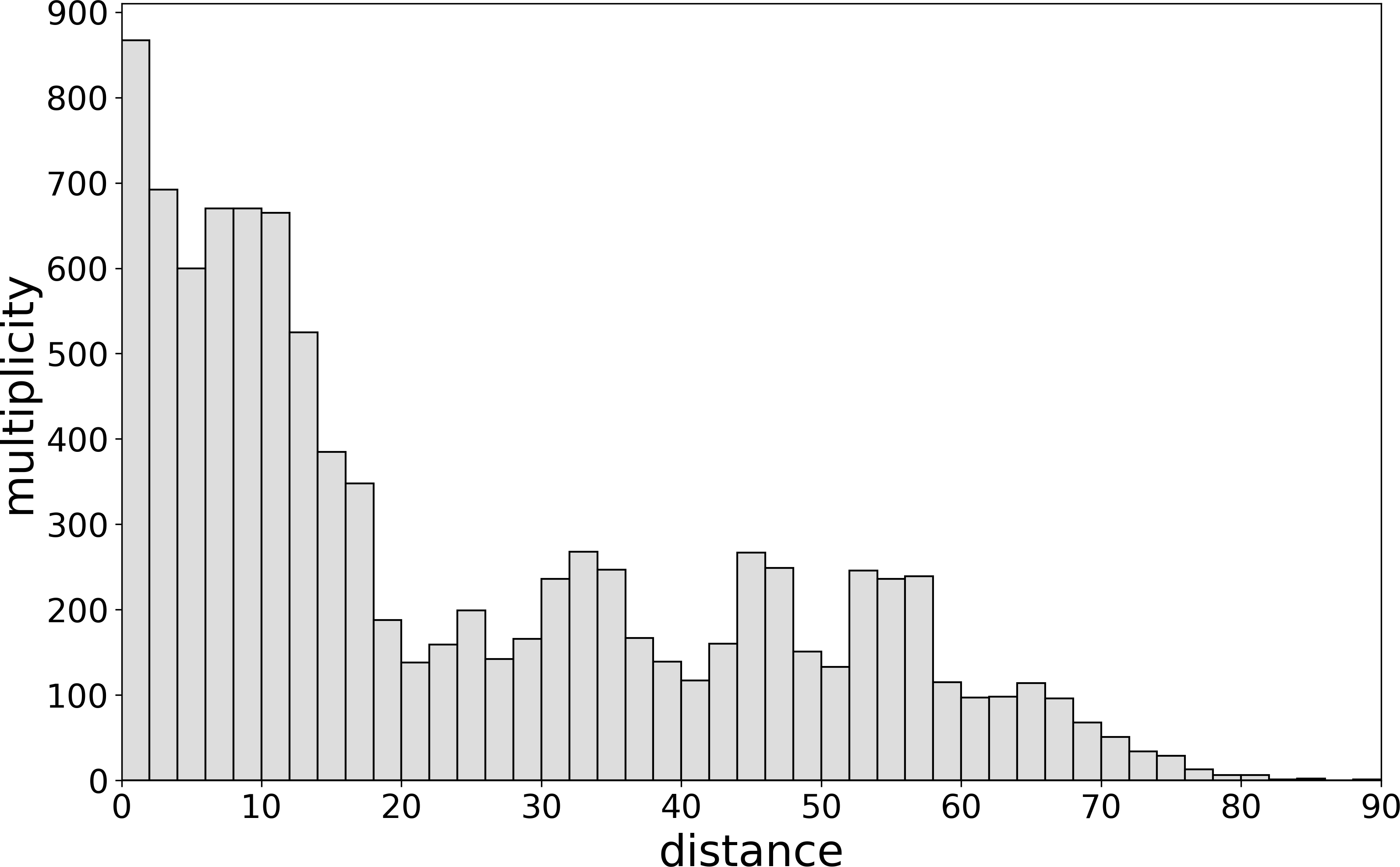}}
  \caption{Illustration of the turtle data and the results of the bootstrap with $B=10\,000$. Panel~(a) shows a pronounced bimodal structure of the data and panel~(b) shows that this leads to several secondary modes in the distribution of bootstrap means. This already indicates that the test will likely not reject the null hypothesis for this data set. Due to the small $n$, the number of minima $m$ of the underlying population cannot be easily determined in this case.\label{fig:turtles}}
\end{figure}

The data and the bootstrap histogram used for the hypothesis test are illustrated in Figure \ref{fig:turtles}. Indeed the hypothesis test with $B=10\,000$ yields a p-value of $0.7868$, which means that the hypothesis that the mean is not unique cannot be rejected.

\begin{Int}
  The data set was previously investigated for finite sample smeariness, as defined by \cite{HEH19}, in \cite{EH19} where it was shown to exhibit very pronounced finite sample smeariness. Indeed, due to the fact that the limiting distribution of the mean in case of smeariness is not unimodal, the test for non-uniqueness of the mean can be expected not to reject in case of finite sample smeariness. This is desirable, since smeariness occurs generically as a boundary case between distributions with a unique mean and distributions with non-unique mean. The data set is therefore compatible with a smeary mean of the population as well as with non-unique means.
\end{Int}

\subsection{Gaussian Mixture Clustering}\label{subsec-supp:gauss-mixture}

For another example of clustering, we consider centroid based clustering of data on $\mathbb{R}^2$ and $\mathbb{R}^4$. In the examples presented here, we use a Gaussian mixture model and apply an EM algorithm for the optimization. We use two classic data sets which are provided by the GNU R \texttt{datasets} package.

The \texttt{faithful} data set from \cite{AB1990} contains eruption duration and preceding inactivitydisplayed in $\mathbb{R}^2$ of $n=272$ eruptions of the Old Faithful geyser in Yellowstone national park. The data set is famously bimodal and the fit of a two-cluster mixture model displayed in Figure~\ref{fig:faithful} clearly rejects the hypothesis of non-uniqueness with a p-value of $p_d = 0$ with $B = 10\,000$. However, when fitting a three-cluster mixture model, which can be considered overfitting, the test no longer reject, with p-value $p_d = 0.6804$. This reinforces the notion that three clusters cannot be uniquely fit to the data in a meaningful way.

\begin{figure}[h!]
  \centering
  \includegraphics[width=0.5\textwidth]{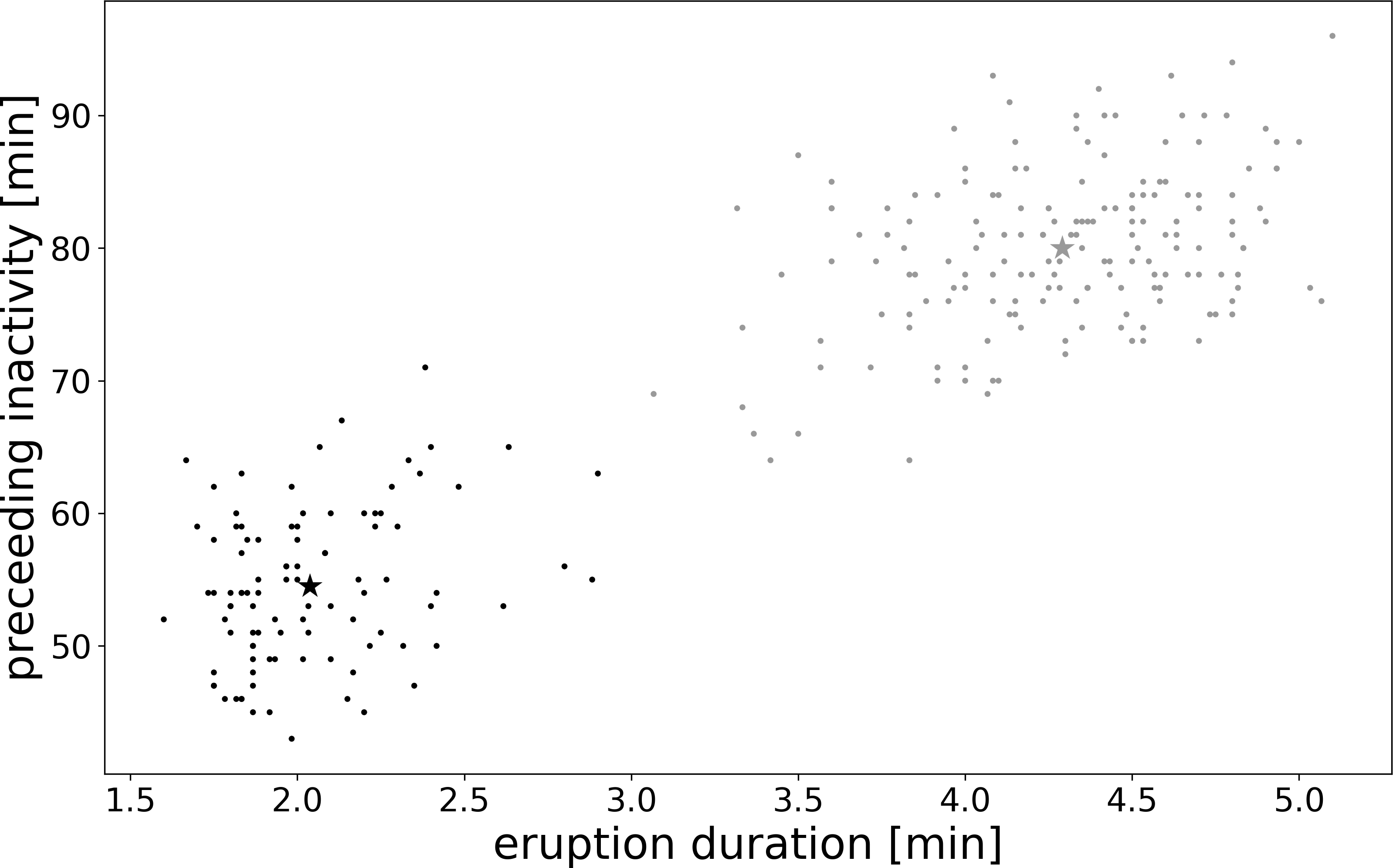}
  \caption{Scatter plot of old faithful eruption data with colors indicating the two-cluster segmentation and the stars indicating cluster centers.\label{fig:faithful}}
\end{figure}

The \texttt{iris} data set collected by \cite{A1935} and analyzed by \cite{F1936} is a widely used benchmark and illustration data set for classification methods. It contains measurements of $4$ features of $50$ iris blossoms for each of three different species, leading to an overall sample size of $n=150$ of data in $\mathbb{R}^4$. One of the three species forms a clearly distinct cluster, which can be easily identified. However, the clusters of the two other species have some overlap and it is not immediately clear from the data, whether three clusters should be fit to the data. A two-cluster model fits the data very well and our test rejects with a p-value of $0$ with $B = 10\,000$. However, when fitting three clusters, as shown in Figure~\ref{fig:iris}, although the results reproduce the true classes well, we find p-value $p_d = 1$, so the test does not reject the possibility of multiple minima. This indicates that the three-cluster Gaussian mixture segmentation of the data set is unreliable and more data are needed for a unique result.

\begin{figure}[h!]
  \centering
  \subcaptionbox{species labels}[0.45\textwidth]{\includegraphics[width=0.45\textwidth]{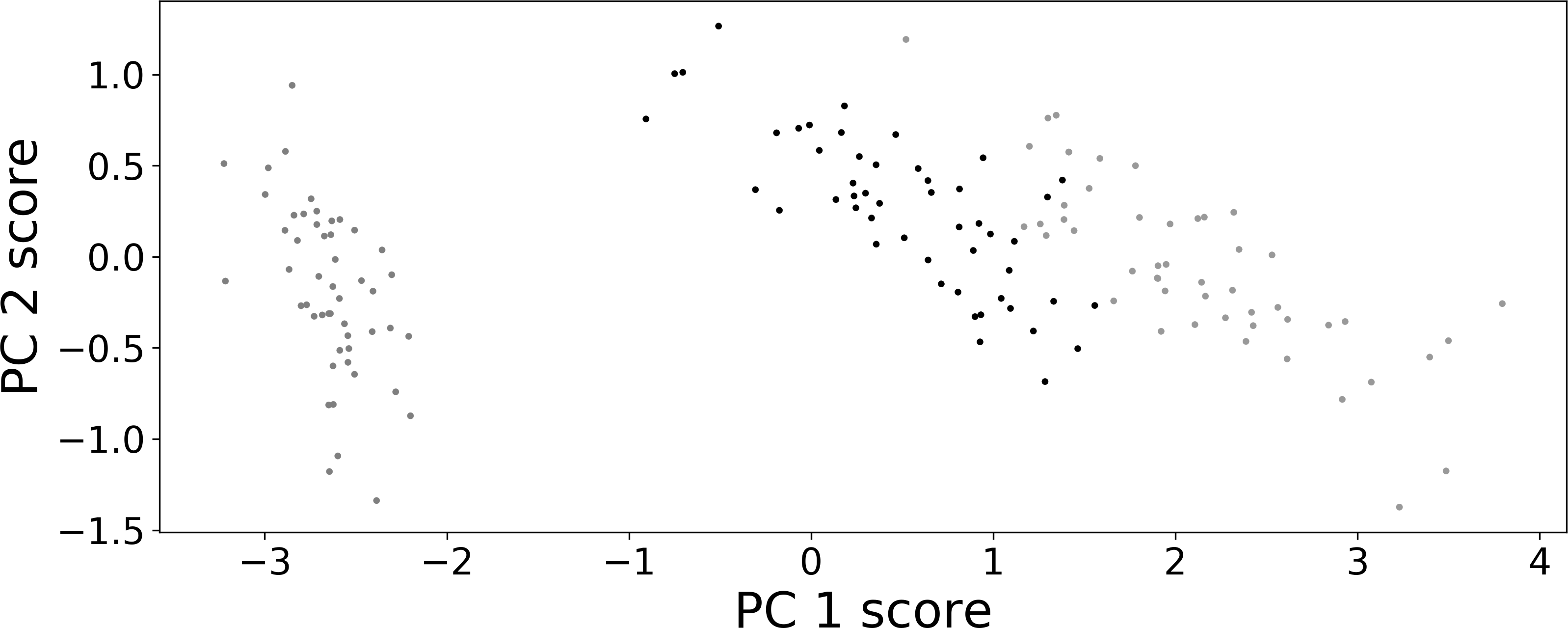}}
  \hspace*{0.02\textwidth}
  \subcaptionbox{Gaussian mixture clusters}[0.45\textwidth]{\includegraphics[width=0.45\textwidth]{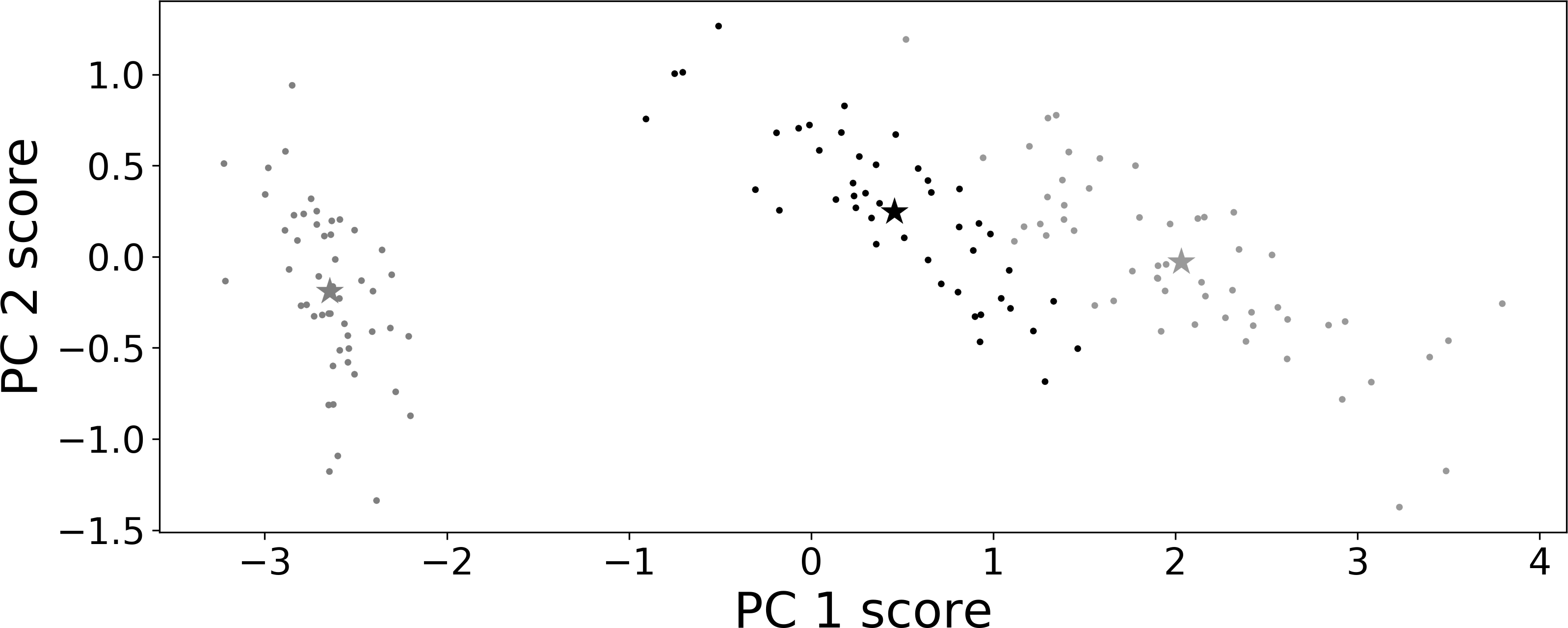}}
  \caption{Scatter plots of the first two principal components of the iris blossom data. The colors in panel~(a) highlight the three species, while the colors in panel~(b) show the three clusters and their centers in the three-cluster segmentation. The clustering results are in very good agreement with the true labels. However, the fact that our hypothesis test does not reject the hypothesis that another clustering result may be equally valid calls the significance of this result into question.
    \label{fig:iris}}
\end{figure}

\begin{Int}
  For the simple Gaussian mixture clustering, which can be interpreted as an unsupervised classification algorithm, we find that the result assuming two components yields a unique result for both data sets. Since the \texttt{faithful} data set consists of two modes, the results that the fit with three components is ambiguous is expected. The \texttt{iris} data set, however, consists of three subsets and a clustering assuming three components reflects the true labels surprisingly well. However, the test for non-uniqueness leads one to question the reliability of the result, since the existence of an alternate ``best fit'' for the population is not ruled out. This is an example for how the test for non-uniqueness of descriptors can be used to indirectly determine the optimal number of clusters.
\end{Int}

\bibliographystyle{Chicago}
\bibliography{bibliography}

\end{document}